\tikzset{->/.style = {decoration={markings,
			mark=at position 1 with {\arrow[scale=2]{latex'}}},
		postaction={decorate}}}
\tikzset{<-/.style = {decoration={markings,
			mark=at position 0 with {\arrowreversed[scale=2]{latex'}}},
		postaction={decorate}}}
\tikzset{<->/.style = {decoration={markings,
			mark=at position 0 with {\arrowreversed[scale=2]{latex'}},
			mark=at position 1 with {\arrow[scale=2]{latex'}}},
		postaction={decorate}}}
\tikzset{->-/.style = {decoration={markings,
			mark=at position #1 with {\arrow[scale=2]{latex'}}},
		postaction={decorate}}}
\tikzset{-<-/.style = {decoration={markings,
			mark=at position #1 with {\arrowreversed[scale=2]{latex'}}},
		postaction={decorate}}}
\tikzset{->>/.style = {decoration={markings,
			mark=at position 1 with {\arrow[scale=2]{latex'}}},
		postaction={decorate}}}
\tikzset{<<-/.style = {decoration={markings,
			mark=at position 0 with {\arrowreversed[scale=2]{twolatex'}}},
		postaction={decorate}}}
\tikzset{<<->>/.style = {decoration={markings,
			mark=at position 0 with {\arrowreversed[scale=2]{twolatex'}},
			mark=at position 1 with {\arrow[scale=2]{twolatex'}}},
		postaction={decorate}}}
\tikzset{->>-/.style = {decoration={markings,
			mark=at position #1 with {\arrow[scale=2]{twolatex'}}},
		postaction={decorate}}}
\tikzset{-<<-/.style = {decoration={markings,
			mark=at position #1 with {\arrowreversed[scale=2]{twolatex'}}},
		postaction={decorate}}}
\tikzset{circ/.style = {fill, circle, inner sep = 0, minimum size = 3}}
\tikzset{scirc/.style = {fill, circle, inner sep = 0, minimum size = 1.5}}
\tikzset{mstate/.style={circle, draw, blue, text=black, minimum width=0.7cm}}
\tikzset{eqpic/.style={baseline={([yshift=-.5ex]current bounding box.center)}}}
\tikzset{commutative diagrams/.cd,cdmap/.style={/tikz/column 1/.append style={anchor=base east},/tikz/column 2/.append style={anchor=base west},row sep=tiny}}
\theoremstyle{definition}
\newtheorem{nthm}{Theorem}[section]
\newtheorem{conjecture}[nthm]{Conjecture}
\newtheorem{nlemma}[nthm]{Lemma}
\newtheorem{nprop}[nthm]{Proposition}
\newtheorem{ncor}[nthm]{Corollary}
\newtheorem{defi}[nthm]{Definition}
\newtheorem{eg}[nthm]{Example}
\newtheorem{remark}[nthm]{Remark}
\newtheorem{intthm}{Theorem}
\newcommand{\al}{\alpha}
\newcommand{\ga}{\gamma}
\newcommand{\MD}{\mathcal{D}}
\newcommand{\CC}{\mathbb{C}}
\newcommand{\QQ}{\mathbb{Q}}
\newcommand{\RR}{\mathbb{R}}
\newcommand{\ZZ}{\mathbb{Z}}
\newcommand{\LX}{\mathcal{L}X}
\newcommand{\E}{\mathcal{E}}
\newcommand{\KK}{\mathbb{K}}
\newcommand{\MN}{\mathcal{N}}
\newcommand{\kk}{\mathbf{k}}
\def\st{\bgroup \ULdepth=-.55ex \ULset}
\begin{document}
	
	\title{On the quantum differential equations for a family of non-K\"ahler monotone symplectic manifolds}
	
	\date{\today}
	
	\author{Kai Hugtenburg}
	\maketitle
	\begin{abstract}
		In this paper we prove Gamma Conjecture $1$ for twistor bundles of hyperbolic $6$ manifolds, which are monotone symplectic manifolds which admit no K\"ahler structure. The proof involves a direct computation of the $J$-function, and a version of Laplace's method for estimating power series (as opposed to integrals). This method allows us to rephrase Gamma Conjecture $1$ in certain situations to an Ap\'ery-like discrete limit. We use this to give a simple proof of Gamma Conjecture $1$ for projective spaces. Additionally we show that the quantum connections of the twistor bundles we consider have unramified exponential type.
	\end{abstract}
	\section{Introduction}
	\begin{defi}
		The Gamma class of an almost complex manifold is defined by: \begin{equation}
			\Gamma_X := \exp \left( -C_{eu}c_1 + \sum_{k\geq 2} (-1)^k (k-1)!\zeta(k) ch_k\right),
		\end{equation}
		where $ch_k$ denotes the degree $2k$ part of the Chern character of $TX$, $\zeta$ denotes the Riemann-zeta function, and $C_{eu}$ is the Euler-Mascheroni constant.
	\end{defi}
	The $J$-function of a monotone symplectic manifold is a cohomology valued function which captures information about the quantum differential equation of $X$, see Section \ref{sect: quantum differential equations} for a precise definition. Gamma Conjecture $1$ concerns the restriction of the $J$-function to the anti-canonical line. This is an element $J(t) \in H^*(X)[[t]][\log t]$, which can be written as \begin{equation}
		J(t) = e^{c_1 \log(t)}\sum_{n \geq 0} J_{rn}t^{rn},
	\end{equation}
	for cohomology classes $J_{rn} \in H^*(X)$, where $r \in \mathbb{N}$ is such that $c_1: H_2(X;\ZZ) \rightarrow \ZZ$ is generated by $r$. Following Galkin-Golyshev-Iritani \cite{GGI}:
	\begin{defi}
		We say that $X$ satisfies Gamma Conjecture $1$ if:
		\begin{equation}
			\lim_{t \to \infty} \frac{J(t)}{\langle J(t), pt \rangle} = \Gamma_X.
		\end{equation}
	\end{defi}
	In \cite{GGI} this is conjectured to hold for Fano varieties, and proven for projective spaces and Grassmanians. In \cite{GoZ} it is proven for Fano threefolds of Picard rank 1. In \cite{GI} it is shown that Gamma Conjecture $1$ is compatible with taking (Fano) hyperplane sections. They also show that it holds for Fano toric varieties under the additional assumption of a mirror analogue of property $\mathcal{O}$. In \cite{HuKeLiYang} the conjecture is proven for Del Pezzo surfaces. 
	\begin{remark}
		Property $\mathcal{O}$ states that the operator $c_1 \star: H^*(X;\CC) \rightarrow H^*(X;\CC)$ is such that the eigenvalue with largest modulus lies on the real line, and moreover is a simple eigenvalue (eigenspace of dimension $1$). For varieties satisfying property $\mathcal{O}$, \cite{GGI} prove an equivalence between the above version of the Gamma conjecture, and a statement in terms of the flat section of smallest growth near $u = 0$ for the quantum connection $\nabla_{\partial_u}$. The twistor bundles we consider do no satisfy property $\mathcal{O}$, so we will only talk about the version of the Gamma conjecture stated above.
	\end{remark}
	
	Now, let $M$ be a hyperbolic $2n$-dimensional manifold with vanishing Stiefel-Whitney classes, and $Z$ the associated `twistor space', that is, the total space of the bundle of orthogonal complex structures on the tangent spaces of $M$. These were considered by Reznikov \cite{Rez}, \cite{FinePanov} and \cite{EEllsSalamon}, where it is shown that for $n \geq 3$, the twistor bundles $Z$ are monotone. Moreover, they are not K\"ahler. In \cite{Evans} the quantum cohomology ring for such $Z$ is computed for $n=3$, which we recall in Theorem \ref{thm: quantum cohomology of FP}. In this paper we prove: 
	\begin{intthm}
		\label{thm: FP satisfies Gamma 1}
		Let $Z$ be the twistor bundle associated to a hyperbolic $6$ manifold with vanishing Stiefel-Whitney classes. Then $Z$ satisfies Gamma Conjecture $1$.
	\end{intthm}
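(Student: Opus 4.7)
The plan is to begin with an explicit formula for the $J$-function of $Z$ on the anti-canonical line, exploiting the presentation of the quantum cohomology ring given in Theorem \ref{thm: quantum cohomology of FP}. Because the quantum connection is generated by a small number of operators coming from the fibrewise $\BP^3$ and the base cohomology of $M$, I expect $J(t)$ to decompose as a product/sum of hypergeometric-type series in which each factor is a Pochhammer ratio in the Chern roots of $TZ$. The payoff of such a closed form is that the problem of evaluating $\lim_{t\to\infty} J(t)/\langle J(t), pt\rangle$ is reduced to an asymptotic comparison of explicit power series with factorial/gamma-type coefficients.

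Next I would develop the \emph{power-series Laplace method} advertised in the abstract. Classical Laplace estimates the integral $\int e^{tS(x)}\,dx$ by localising to a critical point of $S$; here the analogue is a sum $\sum_n a_n t^{rn}$ where, for large $t$, the terms are sharply peaked around an index $n(t)$ determined by the ratio $a_{n+1}/a_n$. Restricting attention to a window of indices around $n(t)$, one rewrites the normalised $J$-function as a discrete quotient that converges as $n(t)\to\infty$ in a controlled way. The upshot is a reformulation of Gamma Conjecture $1$ as an Ap\'ery-style limit
\[
\lim_{N\to\infty} \frac{A_N}{B_N}=\Gamma_Z,
\]
where $A_N,B_N$ are explicit cohomology-valued sequences extracted from the coefficients of $J(t)$. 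I would first validate the machine on $\BP^n$, recovering the known result of \cite{GGI} in a single page, to confirm that the combinatorial limit faithfully encodes the $\Gamma$-class.

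The final step is to identify the discrete limit with $\Gamma_Z$. Writing $\Gamma_Z=\exp\!\bigl(-C_{eu}c_1+\sum_{k\ge 2}(-1)^k(k-1)!\zeta(k)\,ch_k\bigr)$ and expanding the Pochhammer coefficients of $J(t)$ via $\log\Gamma(1+z)=-C_{eu}z+\sum_{k\ge 2}(-1)^k\zeta(k)z^k/k$, the zeta values on both sides match termwise in the Chern roots. The Euler-Mascheroni term is then produced by the $\log$-renormalisation coming from the factor $e^{c_1\log t}$ in the definition of $J$. Matching these series against each Chern-character component $ch_k(TZ)$ completes the computation.

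The main obstacle will be the \emph{peak control} step for $Z$: unlike $\BP^n$, the quantum cohomology of $Z$ is non-semisimple (it fails property $\mathcal{O}$, by the earlier remark), so the large-$t$ behaviour of $J(t)$ is not governed by a single dominant eigenvalue of $c_1\star$ on the real axis. I expect to need a careful analysis showing that nevertheless the \emph{normalised} ratio $J(t)/\langle J(t),pt\rangle$ has a well-defined limit, with contributions from competing eigenvalues conspiring to cancel at the level of cohomology classes (or, concretely, of the associated discrete sequence). Handling this non-semisimplicity cleanly, without assuming a spectral gap, is where the bulk of the technical work will sit.
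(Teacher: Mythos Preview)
Your overall architecture---compute $J(t)$, reduce the Gamma Conjecture to a discrete Ap\'ery-type limit via a power-series Laplace principle, then identify the limit with $\Gamma_Z$---matches the paper's strategy exactly. However, the two substantive steps you single out are precisely the ones where your expectations diverge from what actually happens, and the obstacle you flag as central turns out to be trivial.

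\textbf{The $J$-function is not of Pochhammer type in the Chern roots.} You anticipate that the fibrewise $\BP^3$ structure will force $J(t)$ into a hypergeometric/Pochhammer product in the Chern roots of $TZ$, so that expanding via $\log\Gamma(1+z)$ gives a termwise match with $\Gamma_Z$. This is exactly what fails. The paper computes the $J$-function by a lengthy recursion using the divisor axiom and TRR (Section~\ref{sect: computation of the J-function}), and the coefficients $J_{2d}/\langle J_{2d},pt\rangle$ involve nested harmonic-type sums $S_d(n_1,\dots,n_i)=\sum_{d\ge k_1\ge\dots\ge k_i\ge 1}\prod k_j^{-n_j}$ with weight patterns that are \emph{not} those produced by a product $\prod_k\prod_j(1+\delta_j/k)^{-1}$. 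The paper makes this explicit in Section~\ref{sect: Gamma Conjecture FP}: unlike for $\BP^N$, one does \emph{not} have $J_{2n}/\langle J_{2n},pt\rangle$ equal to the normalised equivariant Euler class $e_{S^1}(\MN^n)^{-1}$, so the one-line argument you envisage is unavailable.

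\textbf{The matching with $\Gamma_Z$ requires nontrivial MZV identities.} Consequently, verifying the Ap\'ery limit is a genuine computation degree by degree: each $R_{i,n}$ is rewritten in terms of partial multiple zeta values $\zeta_n(s_1,\dots,s_k)$, and the correct limit emerges only after invoking Euler's sum theorem $\zeta(2,1)=\zeta(3)$, $\sum\zeta(s_1,\dots,s_k)=\zeta(n)$, and Hoffman's cyclic sum theorem (e.g.\ $\zeta(2,2,1)=\zeta(3,2)$ and the weight-6 identity used in degree~12). This is the bulk of the work, and your proposal does not anticipate it.

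\textbf{Peak control is not the obstacle.} The quantum period is $\langle J(t),pt\rangle=\sum_{n\ge 0} t^{2n}/n!^2$, i.e.\ a clean hypergeometric series with positive coefficients, so the Laplace-type localisation near $n=t$ follows immediately from the general result for such series (Theorem~\ref{thm: hypergeometric series are superpolynmoially peaked}). Non-semisimplicity of $QH^*(Z)$ plays no role in this step; the ratios $\langle J_{2n},\alpha\rangle/\langle J_{2n},pt\rangle$ are products of $S_n$-sums and hence subpolynomially increasing, so the reduction to the discrete limit goes through without any eigenvalue cancellation argument.
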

	Additionally, in Section \ref{sect: FP has unram} we prove: 
	\begin{intthm}
		\label{thm: FP has unramified exponential type}
		Let $Z$ be the twistor bundle of a hyperbolic $6$ manifold with vanishing Stiefel-Whitney classes, then the associated quantum connection $\nabla_{\frac{d}{du}}$ is of unramified exponential type. 
	\end{intthm}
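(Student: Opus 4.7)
The plan is to reduce Theorem \ref{thm: FP has unramified exponential type} to showing that the operator of small quantum multiplication by $c_1$ on $H^*(Z;\CC)$ is semisimple, and then to verify this using the presentation of $QH^*(Z;\CC)$ recorded in Theorem \ref{thm: quantum cohomology of FP}. The quantum connection has the form
\begin{equation*}
\nabla_{d/du} \;=\; \frac{d}{du} \;-\; \frac{1}{u^2}\,(c_1 \star) \;+\; \frac{1}{u}\,\mu ,
\end{equation*}
so its Hukuhara--Turrittin decomposition at $u=0$ is controlled entirely by the leading operator $c_1 \star$. Whenever this operator is semisimple --- equivalently, whenever its minimal polynomial is squarefree over $\CC$ --- we can simultaneously diagonalise the principal part and decompose the formal connection into rank-one summands of the shape $d/du - \lambda_i/u^2 + (\text{regular})/u$. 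Each such summand is manifestly of unramified exponential type, so Theorem \ref{thm: FP has unramified exponential type} follows from semisimplicity.

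To verify semisimplicity, I would take the presentation of $QH^*(Z;\CC)$ supplied by Theorem \ref{thm: quantum cohomology of FP}, choose a generating set for $H^*(Z;\CC)$ adapted to the twistor fibration $Z \to M$, and write down the polynomial relation that $c_1$ satisfies in $QH^*(Z;\CC)$. Semisimplicity of $c_1 \star$ is then the statement that the minimal polynomial of $c_1$ in $QH^*(Z;\CC)$ is squarefree; dually, it is the statement that $QH^*(Z;\CC)$ splits as a product of local Artin $\CC$-algebras on each of which $c_1$ acts as a scalar (no nilpotent contribution). Either formulation is a finite linear algebra check once the ring is written down explicitly.

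The main obstacle I anticipate is the abundance of cohomology classes pulled back from the hyperbolic base $M$, which could a priori produce generalised eigenspaces on which $c_1 \star$ is not diagonalisable and so defeat the reduction above. The route around this is to exploit the fibration structure: the fibrewise quantum multiplication by $c_1^{\text{fib}}$ on the flag-variety fibre is already known to be semisimple by classical quantum Schubert calculus, and the quantum corrections identified in \cite{Evans} couple base and fibre classes in a way that I expect to kill any potential nilpotent contribution from $M$. Making this intuition precise from the ring presentation in Theorem \ref{thm: quantum cohomology of FP} --- most likely by producing an explicit complete set of primitive idempotents of $QH^*(Z;\CC)$ --- is the technical heart of the argument; if a direct idempotent computation proves unwieldy, one can fall back on computing the characteristic polynomial of $c_1 \star$ in a suitable basis and checking squarefreeness by hand, which is a bounded finite verification.
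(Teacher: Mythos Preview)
Your reduction to semisimplicity of $c_1\star$ is exactly the kind of shortcut one would like, but here it fails on the facts. From the explicit $4\times 4$ block in Theorem~\ref{thm: quantum cohomology of FP} (at $q=1$), the characteristic polynomial is $(\lambda^2-4)^2$, and a direct kernel computation shows the eigenspace for $\lambda=2$ is one-dimensional; the paper states this explicitly in Section~\ref{sect: FP has unram}: ``the matrix $c_1\star$ has $2$ Jordan blocks of size $2$, with eigenvalues $\pm 2$.'' So the minimal polynomial is $(\lambda^2-4)^2$, not squarefree, and no complete set of primitive idempotents exists. Your anticipated escape route --- that fibrewise semisimplicity plus the quantum corrections of \cite{Evans} kill the nilpotents --- goes the wrong way: although the fibre $SO(6)/U(3)\cong\mathbb{CP}^3$ has semisimple quantum cohomology, the relation on the total space is $\alpha^{\star 4}=8q\alpha^{\star 2}-16q^2$ (modulo $\tau^*\chi$), i.e.\ $(\alpha^{\star 2}-4q)^2=0$, which is precisely what \emph{creates} the Jordan blocks. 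When the leading coefficient of an order-two pole is not semisimple, one generally cannot rule out ramification by elementary splitting arguments; additional work is genuinely required.

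The paper's proof avoids semisimplicity altogether and instead tracks Malgrange's irregularity number. One first decomposes the quantum $\mathcal D$-module according to the block structure of Theorem~\ref{thm: quantum cohomology of FP} and then, within each block, by the generalised eigenvalues $\pm 2$ of $c_1\star$ via Lemma~\ref{lem: decomposition of D module}. For the $4$-dimensional blocks $c_1\star$ is invertible, so Proposition~\ref{prop: invertible polar term gives maximum irregularity} gives $\mathrm{Irr}=4$; after twisting by $\mathcal E^{2/u}$ one computes the associated differential operator (via a cyclic vector) and finds $\mathrm{Irr}=2$. Additivity (Lemma~\ref{lem: irregularity number is additive}) plus $\mathrm{Irr}(M_{-2}\otimes\mathcal E^{2/u})=2$ from Proposition~\ref{prop: invertible polar term gives maximum irregularity} forces $\mathrm{Irr}(M_2\otimes\mathcal E^{2/u})=0$, hence regular singular by Proposition~\ref{prop: regular singular iff 0 irregularity}. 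The $8$-dimensional block is handled the same way. This argument works precisely because it never needs $c_1\star$ to be diagonalisable; the irregularity bookkeeping absorbs the Jordan blocks.
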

	\begin{remark}
		As far as the author is aware, this is the first non-K\"ahler symplectic manifold for which either Gamma Conjecture $1$ or the unramified exponential type of the quantum connection has been verified.
	\end{remark}
	\begin{remark}
		The author apologises for having erroneously stated in conversations that the connection would not be of unramified exponential type. Hopefully the record will be corrected through this publication.
	\end{remark}
	It is thus natural to ask whether Gamma Conjecture $1$ (and also the unramified exponential type conjecture) hold for all monotone symplectic manifolds, and not just the Fano ones. We observe: 
	\begin{nlemma}
		Let $X$ be a monotone symplectic manifold satisfying Gamma Conjecture $1$, then the (very small) quantum ring of $X$ is deformed (i.e. not isomorphic to the classical cohomology ring).
	\end{nlemma}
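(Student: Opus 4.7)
The plan is to prove the contrapositive: assume the very small quantum ring of $X$ is isomorphic, as a graded ring, to the classical cohomology ring $(H^*(X),\cup)$, and derive a contradiction with Gamma Conjecture 1.

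First I would observe that under a graded ring isomorphism $\phi\colon (H^*(X),\cup)\to(H^*(X),\star)$, quantum multiplication by $c_1$ on $H^*(X)$ is conjugate via $\phi$ to cup product by $c_1':=\phi^{-1}(c_1)\in H^{\geq 2}(X)$. Since cup product by a positive-degree class strictly raises degree on the finite-dimensional graded space $H^*(X)$, the operator $c_1'\cup$ is nilpotent, and hence so is $c_1\star$.

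Next I would argue that this nilpotence forces the $J$-function to reduce to its classical form $J(t)=e^{c_1\log t}$, with all quantum corrections $J_{rn}$ for $n\geq 1$ vanishing. The cleanest instance is when $\star=\cup$ literally: the quantum connection $\nabla = d/dt+(c_1\star)/t$ coincides with the classical one, and the distinguished flat section with $J_0=1$ is precisely $e^{c_1\log t}$. For a general (non-identity) isomorphism $\phi$, the quantum $D$-module is transported to the classical one, and the same conclusion follows after tracking how $\phi$ acts on initial data.

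I would then compute the normalization. Because $pt$ is top-degree, the Poincar\'e pairing picks out the $H^0$-component of $J(t)$. From the expansion $e^{c_1\log t}=\sum_{k\geq 0}(c_1\log t)^k/k!$, only the $k=0$ term contributes in $H^0$, so $\langle J(t),pt\rangle=1$, giving $J(t)/\langle J(t),pt\rangle = e^{c_1\log t}$. Since $X$ is monotone, $c_1=\lambda[\omega]$ for some $\lambda>0$; letting $n=\tfrac12\dim_\RR X$, the class $c_1^n=\lambda^n\omega^n\neq 0$ in $H^{2n}(X)$. The top-degree component of $e^{c_1\log t}$ thus grows like $(\lambda\log t)^n[\omega^n]/n!$ as $t\to\infty$, so the limit does not exist, contradicting Gamma Conjecture 1, which asserts this limit equals the finite class $\Gamma_X$.

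The main obstacle is the second step: rigorously extracting the vanishing of $J_{rn}$ for $n\geq 1$ from the ring isomorphism hypothesis alone. When $\star=\cup$ literally this is immediate from uniqueness of the flat section matching the initial condition; in the strictly-isomorphic case one must trace how $\phi$ interacts with the 1-point gravitational descendants that define the $J$-function, and rule out that a nontrivial $\phi$ reintroduces $t^{rn}$-corrections. Everything else in the argument is a mechanical computation once the reduction $J(t)=e^{c_1\log t}$ is in place.
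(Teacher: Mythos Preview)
The paper states this lemma without proof, as an observation. Your contrapositive argument in the case $\star=\cup$ is correct and is almost certainly what the author has in mind: if the very small quantum product equals cup product, the degree constraint on three-point invariants forces $c_1\star_{\log(t)c_1}=c_1\cup$ for every $t$, so the connection $\nabla_{\log(t)c_1}$ is the classical one and uniqueness of the normalised fundamental solution gives $J(t)=e^{c_1\log t}$; hence $\langle J(t),pt\rangle=1$ and, since $c_1\neq 0$ for monotone $X$, the ratio $J(t)/\langle J(t),pt\rangle$ diverges.

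The gap you flag for a merely \emph{isomorphic} quantum ring is genuine, and your proposed patch does not close it. A ring isomorphism $\phi$ has no reason to intertwine the connection $\nabla_{\log(t)c_1}$: that connection depends on the entire $t$-family $\star_{\log(t)c_1}$ and on the operator $t\partial_t$, neither of which $\phi$ is assumed to respect, so the quantum $D$-module is \emph{not} transported to the classical one. Nilpotence of $c_1\star$ alone does not force the one-point descendant corrections $J_{rn}$ ($n\ge 1$) to vanish. The paper's parenthetical ``i.e.\ not isomorphic to the classical cohomology ring'' is most plausibly read as shorthand for ``$\star\neq\cup$'', which is all that the subsequent sentence about the existence of a sphere with $c_1(\beta)\le 2\dim_\CC X$ actually requires, and is exactly what your argument establishes.
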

	Thus, the validity of Gamma Conjecture $1$ would imply the existence of a pseudo-holomorphic sphere in degree $\beta \in H_2(X)$ with $c_1(\beta) \leq 2dim_\CC X$.
	\subsection{Rephrasing Gamma Conjecture $1$}
	To proof Theorem \ref{thm: FP satisfies Gamma 1}, we rephrase Gamma Conjecture $1$ to a statement about the limit of the coefficients of the $J$-function. To make this precise, first define:
	\begin{defi}
		\label{defi: subpolynomially increasing sequence}
		Say a sequence $b_n$ `increases subpolynomially' if $b_n$ is increasing, $b_n \leq Bn$ for some $B > 0$, and $\lim_{n \to \infty}\frac{b_n}{n^p} = 0$ for all $p > 0$. 
	\end{defi}
	\begin{defi}
		\label{defi: superpolynomially peaked series}
		Let $I(x) = \sum_{n = 0}^\infty a_nx^n$ be a series which convergences absolutely on $(-\infty,\infty)$. We say it is `superpolynomially peaked near $n = f(x)$' if, for every subpolynomially increasing sequence $b_n$, and for every integer $k \geq 0$ we have that \begin{equation}
			\lim_{x \to \infty} \frac{\sum_{n \geq 1}^\infty a_nb_n\log(n)^kx^n -\sum_{n \geq 1}^\infty a_nb_nx^n\log(f(x))^k}{I(x)} = 0.
		\end{equation}
	\end{defi}
	We then show:
	\begin{intthm}
		\label{thm: rephrasing Gamma Conjecture 1}
		Suppose that the quantum period (given by $\langle J(t), pt \rangle$) of a monotone symplectic manifold $X$ has non-negative coefficients and is superpolynomially peaked near $n = Ct$, and that the sequences $\frac{\langle J_{rn}, \alpha \rangle}{\langle J_{rn}, pt \rangle}$ are subpolynomially increasing for any $\alpha \in H_*(X)$, then Gamma Conjecture $1$ is equivalent to the Ap\'ery limit: \begin{equation}
			\lim_{n \to \infty} \frac{e^{c_1 \log(\frac{n}{C})}J_{rn}}{\langle J_{rn}, pt \rangle} = \Gamma_X.
		\end{equation}
	\end{intthm}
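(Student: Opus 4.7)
The plan is to test both statements against an arbitrary homology class $\alpha \in H_*(X)$, expand the exponential $e^{c_1 \log t}$ appearing in $J(t)$, and use the superpolynomial peaking hypothesis to swap $(\log t)^k$ and $(\log(n/C))^k$ factors inside weighted sums. Set $a_n := \langle J_{rn}, pt\rangle$, $d_n^{(\alpha,k)} := \langle J_{rn}, c_1^k \cap \alpha\rangle/a_n$ (subpolynomial in $n$ by hypothesis applied to the class $c_1^k \cap \alpha$), and $I(t) := \sum_n a_n t^{rn}$. Then Gamma Conjecture 1 paired with $\alpha$ becomes
\[
\frac{\langle J(t), \alpha\rangle}{I(t)} = \sum_k \frac{(\log t)^k}{k!} \cdot \frac{\sum_n a_n d_n^{(\alpha,k)} t^{rn}}{I(t)} \xrightarrow{t \to \infty} \langle \Gamma_X, \alpha\rangle,
\]
while the Apéry-like limit paired with $\alpha$ becomes
\[
E_n^{(\alpha)} := \sum_k \frac{(\log(n/C))^k}{k!} d_n^{(\alpha,k)} \xrightarrow{n \to \infty} \langle \Gamma_X, \alpha\rangle.
\]

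The technical heart of the proof is the identity
\[
\frac{1}{I(t)}\sum_n a_n E_n^{(\alpha)} t^{rn} = \frac{\langle J(t), \alpha\rangle}{I(t)} + o(1) \qquad (t \to \infty),
\]
obtained by applying superpolynomial peaking termwise with $b_n = d_n^{(\alpha,k)}$. Since $f(t) = Ct$ gives $\log f(t) = \log t + \log C$ and $\log n = \log(n/C) + \log C$, a binomial expansion and induction on $k$ translate the peaking condition from $(\log n)^k$ vs.\ $(\log f(t))^k$ into the analogous statement for $(\log(n/C))^k$ vs.\ $(\log t)^k$, with error still $o(I(t))$. Consequently Gamma Conjecture 1 is equivalent to the convergence $I(t)^{-1}\sum_n a_n E_n^{(\alpha)} t^{rn} \to \langle \Gamma_X, \alpha\rangle$ as $t \to \infty$, for every $\alpha \in H_*(X)$.

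The weights $w_n(t) := a_n t^{rn}/I(t)$ are nonnegative, sum to $1$, and concentrate on arbitrarily large $n$ as $t \to \infty$, a consequence of peaking (which forces the log-moments $\sum_n w_n(t) (\log n)^k$ to equal $(\log f(t))^k + o(1)$ and hence diverge with $t$). For the Apéry $\Rightarrow$ Gamma direction, a standard truncation (splitting $n < N$ versus $n \ge N$ after choosing $N$ so that $|E_n^{(\alpha)} - \langle \Gamma_X, \alpha\rangle| < \epsilon$ for $n \ge N$) shows the weighted average converges to $\langle \Gamma_X, \alpha\rangle$. For the converse Gamma $\Rightarrow$ Apéry direction, I will exploit the bounded-degree polynomial-in-$\log(n/C)$ structure of $E_n^{(\alpha)}$, together with the full peaking hypothesis (controlling every moment of $\log n$), to argue $\sum_n w_n(t) E_n^{(\alpha)} = E_{[Ct]}^{(\alpha)} + o(1)$; convergence of the weighted averages as $t \to \infty$ then forces convergence of $E_n^{(\alpha)}$ as $n \to \infty$, since $[Ct]$ traverses all sufficiently large integers. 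The main obstacle is this Tauberian step, which requires going beyond the pure moment-replacement form of peaking and exploiting the slow logarithmic variation of the subpolynomial coefficients $d_n^{(\alpha,k)}$ to pointwise-identify the weighted average with its value at $n \approx Ct$.
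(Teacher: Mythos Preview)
Your approach is essentially the paper's. The paper's proof is two lines: first, the superpolynomial peaking hypothesis (applied with $b_n = \langle J_{rn},c_1^k\cap\alpha\rangle / \langle J_{rn}, pt\rangle$ for each power of $c_1$) is used to replace the external factor $e^{c_1\log t}$ by $e^{c_1\log(n/C)}$ inside the sum, so that the Gamma Conjecture limit $\lim_{t\to\infty}\langle J(t),\alpha\rangle/I(t)$ coincides with the weighted-average limit $\lim_{t\to\infty} I(t)^{-1}\sum_n a_n E_n^{(\alpha)} t^{rn}$; second, a short Abelian lemma (if $a_n\ge 0$, $I(x)=\sum a_n x^n$ is entire with infinitely many nonzero terms, and $b_n\to b$, then $\sum a_n b_n x^n / I(x)\to b$) converts convergence of the Ap\'ery sequence $E_n^{(\alpha)}$ into convergence of the weighted average. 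Your expansion of $e^{c_1\log t}$ as a finite polynomial in $\log t$, pairing with $\alpha$, and handling the shift $\log(Ct)$ versus $\log t$ by binomial expansion simply make explicit what the paper compresses into one displayed equation; your truncation argument is exactly the paper's Abelian lemma.

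Where you go beyond the paper is the converse, Gamma $\Rightarrow$ Ap\'ery. The paper invokes only the Abelian lemma and so, strictly speaking, establishes only the implication Ap\'ery $\Rightarrow$ Gamma; this is the only direction used in the applications to $\mathbb{CP}^N$ and the twistor bundles. You are right that the Tauberian step is the real obstacle, and your sketch for it is not yet complete: to get $\sum_n w_n(t) E_n^{(\alpha)} = E_{[Ct]}^{(\alpha)}+o(1)$ you would need the coefficients $d_n^{(\alpha,k)}$ to be approximately constant across the peak window $n\approx Ct$, and ``subpolynomially increasing'' by itself does not yield such a pointwise slow-variation estimate (it bounds the growth rate, not the increment $d^{(\alpha,k)}_{N_+}-d^{(\alpha,k)}_{N_-}$). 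If you want the full equivalence you will need an additional argument here; as written, you have recovered exactly what the paper proves, with more honesty about what remains.
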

	In Section \ref{sect: Gamma Conjecture FP} we verify these assumptions for the twistor bundles and then directly compute the required Ap\'ery limit to conclude Gamma Conjecture $1$. In Section \ref{sect: Gamma Conjecture for projective space} we prove that $\mathbb{CP}^n$ satisfies these assumptions, and use this to give a simple proof of Gamma Conjecture $1$ for projective spaces.
	\begin{remark}
		We expect that the quantum period is superpolynomially peaked near $n = \frac{T}{r}t$, where $T = \max \{|\lambda| \mid \lambda \text{ is an eigenvalue of } c_1 \star: QH^*(X) \rightarrow QH^*(X) \}$ and $r$ is as before.
	\end{remark}
	To verify the assumptions of the above theorem for projective spaces and for the twistor bundles over hyperbolic manifolds, we prove (using ideas from \cite{Paris}):
	\begin{intthm}
		\label{thm: hypergeometric series are superpolynmoially peaked}
		The series
		\begin{equation}
			I(x) := \sum_{n \geq 0} \frac{\prod_{r=1}^{p} \Gamma(\alpha_rn+a_r)}{\prod_{r = 1}^{q} \Gamma(\beta_rn+b_r)}x^n
		\end{equation}
		is superpolyonmially peaked near $n = (hx)^{\frac{1}{\kappa}}$, where $\kappa = \sum_r \beta_r - \sum_r \alpha_r$ (which we assume satisfies $\kappa \geq 1$), and $h = \prod_{r}\alpha_r^{\alpha_r} \prod_r \beta_r^{-\beta_r}$.
	\end{intthm}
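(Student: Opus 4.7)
The plan is to apply a discrete Laplace/saddle-point analysis in the spirit of \cite{Paris} for Fox--Wright functions. Stirling's formula gives, uniformly for large $n$,
\begin{equation*}
\log(a_n x^n) = g_x(n) + \rho \log n + \sigma + O(1/n), \qquad g_x(n) := -\kappa n \log n + n\bigl(\kappa + \log(hx)\bigr),
\end{equation*}
where $a_n := \prod_r \Gamma(\alpha_r n + a_r)/\prod_r \Gamma(\beta_r n + b_r)$ and $\rho,\sigma$ are explicit constants depending only on the parameters $\alpha_r, \beta_r, a_r, b_r$. The function $g_x$ is strictly concave with unique maximum at $n^\ast := (hx)^{1/\kappa} = f(x)$ and $g_x''(n^\ast) = -\kappa/n^\ast$, predicting a Gaussian peak of width $\sqrt{n^\ast/\kappa}$.

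The key quantitative input is obtained by writing $n = n^\ast(1+\delta)$ and using $\log(hx) = \kappa \log n^\ast$ to get
\begin{equation*}
g_x(n) - g_x(n^\ast) = \kappa n^\ast\bigl(\delta - (1+\delta)\log(1+\delta)\bigr),
\end{equation*}
which is strictly concave in $\delta$ with quadratic behaviour $-\tfrac{1}{2}\kappa n^\ast \delta^2 + O(\delta^3)$ near the origin and super-linear decay as $|\delta|\to\infty$. Fixing $\epsilon \in (0,1/2)$, this yields $g_x(n) - g_x(n^\ast) \leq -c\, n^{\ast 2\epsilon}$ uniformly outside the window $W_x := \{n : |n - n^\ast| < n^{\ast 1/2+\epsilon}\}$. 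A standard discrete Gaussian comparison then gives $I(x) \sim C n^{\ast \rho + 1/2} e^{\kappa n^\ast}$, with the tail of $I(x)$ outside $W_x$ suppressed by $e^{-c n^{\ast 2\epsilon}/2}$.

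For $n \in W_x$, a Taylor expansion yields $|\log(n)^k - \log f(x)^k| \leq C_k \log(n^\ast)^{k-1} n^{\ast -1/2+\epsilon}$, and since $b_n$ increases subpolynomially we have $\sup_{n \in W_x} b_n = o(n^{\ast \eta})$ for every $\eta > 0$. Choosing $\epsilon + \eta < 1/2$, the $W_x$-contribution to the numerator in Definition \ref{defi: superpolynomially peaked series} is bounded by $C \log(n^\ast)^{k-1} n^{\ast -1/2 + \epsilon + \eta} I(x) = o(I(x))$. For $n \notin W_x$, the weight $b_n|\log(n)^k - \log f(x)^k|$ grows at most polynomially in $n$; the auxiliary series $\sum a_n n^{1+\eta} x^n$ admits the same saddle analysis with a polynomial-in-$n^\ast$ enhancement of the peak, and its tail beyond $W_x$ is thus still $o(I(x))$ via the same exponential bound.

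The main obstacle is making the saddle-point approximation of $I(x)$ genuinely uniform in $x$: one must carefully control the $O(1/n)$ Stirling remainder, the slowly varying prefactor $e^{\rho \log n + \sigma}$, and the discrete-to-continuous Riemann-sum correction simultaneously in order to obtain both the leading asymptotic $I(x) \sim C n^{\ast \rho + 1/2} e^{\kappa n^\ast}$ and the exponential tail bound. Once these are in hand, the replacement of $\log(n)^k$ by $\log f(x)^k$ reduces to a Taylor estimate of size $n^{\ast -1/2+\epsilon}$, and the subpolynomial freedom in $b_n$ is harmlessly absorbed into the parameter $\eta$.
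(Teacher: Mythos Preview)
Your approach is correct in outline and follows the classical discrete saddle-point method directly, essentially reconstructing Paris's analysis from Stirling's formula and carrying the subpolynomial weights $b_n$ through the Gaussian window by hand. The paper takes a more modular route. It first isolates an abstract implication (Theorem~\ref{thm: exponentially bounded implies superpolynomially peaked}): any series that is \emph{exponentially bounded away from} $n=f(x)$ in the sense of Definition~\ref{defi: exponentially bounded series} is automatically superpolynomially peaked there; this reduces the whole problem to proving exponential tail bounds. For the unweighted case $b_n\equiv 1$ these bounds are simply quoted from Paris (Lemma~\ref{lem: exponentially bounded tails Paris}). The extension to arbitrary subpolynomially increasing $b_n$ is then handled by a one-line trick rather than by your window-and-$\eta$ bookkeeping: since $b_n\le Bn = B\,\Gamma(n{+}1)/\Gamma(n)$, the weighted tail is dominated by the tail of another Fox--Wright series $G(x)$ with the \emph{same} $\kappa$ and $h$, to which Paris applies verbatim, and one checks $G(x)/F(x)$ grows only polynomially. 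This completely sidesteps the uniform Stirling estimates and discrete-to-continuous corrections that you flag as ``the main obstacle.'' Your direct analysis buys self-containment and exposes the mechanism (concavity of $g_x$, Gaussian width $\sqrt{n^\ast/\kappa}$); the paper's decomposition buys brevity, offloads the hard analysis to the literature, and isolates the reusable principle that exponential localisation implies superpolynomial peaking.
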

	\begin{remark}
		Combing the computation of the quantum period of Fano threefolds in \cite{Coat} with Theorem \ref{thm: hypergeometric series are superpolynmoially peaked} we find that the quantum periods of Fano threefolds of Picard rank $1$ are superpolynomially peaked near $n = \frac{T}{r}t$. Moreover, computer calculations show that the quantum periods of Fano threefolds with higher Picard rank are also superpolynomially peaked. One might thus expect the conclusion of Theorem \ref{thm: rephrasing Gamma Conjecture 1} to hold more generally. Certainly, one would hope to weaken the assumption of non-negative coefficients.
	\end{remark}
	\begin{remark}
		During the writing of this manuscript, we learned that ChunYin Hau, a PhD student supervised by Hiroshi Iritani, has obtained related results on the asymptotics of the coefficients $J_{rn}$. 
	\end{remark}

	\subsection{Acknowledgements}
	The author would like to thank Paul Seidel, for suggesting twistor bundles as an interesting space for computations, Hiroshi Iritani and his student ChunYin Hau, for a helpful conversation on the Gamma Conjecture, and Jonny Evans for explaining various parts of his computation of the quantum cohomology of the twistor bundles. This work was partly supported by the ERC Starting Grant 850713 – HMS and EPSRC Grant EP/W015749/1.
	
	\section{The Gamma class and the loop space}
	Here we show how the Gamma class of any almost complex manifold $X$ can be obtained as a certain renormalised limit of the $S^1$-equivariant Euler class of the positive normal bundle of $X$ inside the loop space $\LX$. This was already observed in \cite{Lu}, but we take a slightly different approach which is better adapted for our proofs of Gamma Conjecture $1$. To this end, let $\MN_{X \subset \LX} \rightarrow X$ denote the (infinite dimensional) normal bundle. This comes with a natural $S^1$-action given by rotation of loops. One can decompose this bundle into Fourier modes, to obtain an isomorphism \begin{equation}
		\MN_{X \subset \LX} \cong \bigoplus_{d \in \ZZ}TX \otimes \eta^d,
	\end{equation}
	where $\eta^d$ denotes the one-dimensional $S^1$ representation with weight $d$, and $S^1$ acts trivially on the tangent bundle of $X$. Then define the degree $n$ approximation to the positive part by \begin{equation}
		\MN^n := \bigoplus_{k = 0}^n TX \otimes \eta^k.
	\end{equation}
	A short computation shows that \begin{equation}
		e_{S^1}(\MN^n) = \prod_{k = 1}^{n} \prod_{j =1}^{dim(X)}  (uk + \delta_j),
	\end{equation}
	where the $\delta_j$ are the Chern roots of (the tangent bundle of) $X$ and $u$ is the equivariant parameter (coming from $H^2(BU(1))$). Inverting the $S^1$-equivariant Euler-class yields:
	\begin{equation}
		e_{S^1}(\MN^n)^{-1} = \frac{1}{(un!)^{dim(X)}}\prod_{k = 1}^{n}  \prod_{j =1}^{dim(X)}  (1 + \frac{\delta_j}{uk})^{-1}.
	\end{equation}
	Normalise this cohomology class to have constant term $1$, that is, consider the class $\frac{e_{S^1}(\MN^n)^{-1}}{\langle e_{S^1}(\MN^n)^{-1}, pt \rangle}$.
	Then, the approach by \cite{Lu} is to restrict to $u = 1$ and consider the limit \begin{equation}
		\lim_{d \to \infty} \prod_{k = 1}^{n}  \prod_{j =1}^{dim(X)}  (1 + \frac{\delta_j}{k})^{-1} e^{\frac{\delta_j}{k}} = e^{\ga c_1} \Gamma_X.
	\end{equation}
	Where the equality follows from the Weierstrass form of the $\Gamma$ function, and $c_1 = c_1(TX)$ is the first Chern class of $X$. Instead, we rearrange this infinite product differently:
	\begin{equation}
		\prod_{k = 1}^{n} \prod_{j =1}^{dim(X)}  (1 + \frac{\delta_j}{uk})^{-1} e^{\frac{\delta_j}{uk}} =  e^{\frac{c_1(\sum_{k=1}^{n}\frac{1}{k} - log(n))}{u}}e^{\frac{c_1log(d)}{u}} \prod_{k = 1}^{n} \prod_{j =1 }^{dim(X)}  (1 + \frac{\delta_j}{uk})^{-1}.
	\end{equation}
	The first term on the right-hand side converges to $e^{\ga c_1}$. So that we find: \begin{nlemma}
		we have: 
		\begin{equation}
			\label{eq: limit of Phid}
			\lim_{n \to \infty} e^{c_1log(n)} \frac{e_{S^1}(\MN^n)^{-1}}{\langle e_{S^1}(\MN^n)^{-1}, pt \rangle}\bigg\vert_{u = 1} = \Gamma_X
		\end{equation}
	\end{nlemma}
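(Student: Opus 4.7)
The lemma is essentially an assembly of the ingredients already prepared in the paragraph above the statement, combined with an easy computation of the normalization factor. The plan is to specialize the displayed identity to $u=1$, divide by the normalization, and take $n\to\infty$.

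First I would compute the normalization. Because each factor $(1+\delta_j/k)^{-1}$ has constant term $1$ when expanded in the Chern roots, the degree-zero component of $e_{S^1}(\MN^n)^{-1}|_{u=1}$ is simply $1/(n!)^{\dim X}$. Hence
\begin{equation}
\frac{e_{S^1}(\MN^n)^{-1}|_{u=1}}{\langle e_{S^1}(\MN^n)^{-1}|_{u=1},\, pt\rangle} = \prod_{k=1}^{n}\prod_{j=1}^{\dim X}\left(1+\frac{\delta_j}{k}\right)^{-1}.
\end{equation}

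Next, I would set $u=1$ in the displayed identity from the previous paragraph and rearrange it as
\begin{equation}
e^{c_1 \log n}\prod_{k=1}^{n}\prod_{j}\left(1+\frac{\delta_j}{k}\right)^{-1} = e^{-c_1(H_n - \log n)}\prod_{k=1}^{n}\prod_{j}\left(1+\frac{\delta_j}{k}\right)^{-1} e^{\delta_j/k},
\end{equation}
where $H_n = \sum_{k=1}^n 1/k$. As $n\to\infty$, the prefactor tends to $e^{-C_{eu} c_1}$ since $H_n - \log n \to C_{eu}$, while the remaining product converges, by the Weierstrass formula $\Gamma(1+z)^{-1} = e^{C_{eu} z}\prod_k (1+z/k)e^{-z/k}$, to $\prod_j e^{C_{eu}\delta_j}\Gamma(1+\delta_j) = e^{C_{eu} c_1}\prod_j \Gamma(1+\delta_j)$.

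Finally, I would identify $\prod_j \Gamma(1+\delta_j)$ with $\Gamma_X$. Expanding $\log\Gamma(1+z) = -C_{eu} z + \sum_{k\geq 2} (-1)^k \zeta(k) z^k/k$, summing over Chern roots, and using $\sum_j \delta_j^k = k!\cdot ch_k$ recovers exactly the exponential defining $\Gamma_X$. The two factors of $e^{C_{eu} c_1}$ then cancel and the limit equals $\Gamma_X$, as claimed. The argument is essentially routine once the identity above the lemma is in hand, so there is no substantive obstacle; the only care needed is to keep the regularizing factors $e^{\delta_j/k}$ in balance when passing to the limit.
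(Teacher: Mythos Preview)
Your proposal is correct and follows essentially the same route as the paper: both use the trivial rearrangement of the product to separate off the regularizing factor $e^{c_1(H_n-\log n)}$, then invoke the Weierstrass product for $\Gamma(1+z)$ to pass to the limit. You are somewhat more explicit than the paper in spelling out the log-Gamma expansion and the identification $\prod_j \Gamma(1+\delta_j)=\Gamma_X$, but this is a matter of presentation rather than a different idea.
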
 
	\section{Quantum differential equations}
	\label{sect: quantum differential equations}
	Let $(X,\omega)$ be a monotone symplectic manifold, which means that $[\omega] = \lambda c_1 \in H^2(X;\RR)$ for some $\lambda \in \RR_{> 0}$, where $c_1$ is the first Chern class (of the tangent bundle) of $X$. Associated to an element $\ga \in H^*(X)$, one can (assuming convergence), form the bulk-deformed quantum product $\star_\gamma$, defined by: \begin{equation}
		\left\langle \al_1 \star_{\gamma} \al_2, \al_3 \right\rangle_X = \sum_{\substack{\beta \in H_2(X)\\l \geq 0}} GW_{0,3+l,\beta}(\al_1,\al_2, \al_3, \ga^{\otimes l})
	\end{equation}
	where $GW_{0,k}$ denotes a genus $0$ Gromov-Witten invariant, see e.g. \cite{MS12}, and $\left\langle \_, \_ \right\rangle_X$ denotes the Poincar\'e pairing on $X$. Now consider the quantum cohomology with bulk deformation restricted to the anticanonical line $\log(t)c_1$. This means that the quantum product on $H^*(X)[[t]]$ is given by \begin{equation}
		\left\langle \al_1 \star_{\log(t)c_1} \al_2, \al_3 \right\rangle_X = \sum_{\beta \in H_2(X)} GW_{0,3,\beta}(\al_1,\al_2, \al_3)t^{c_1(\beta)}.
	\end{equation}
	We will also use the `very small' quantum product denoted by $\star := \star_0$, obtained by setting $t = 1$. One can also consider the quantum connection, the particular ones we need are:
	\begin{equation}
		\nabla_{\log(t)c_1}: H^*(X)[[t]]((u)) \rightarrow H^*(X)[[t]]((u))
	\end{equation}
	defined by
	 \begin{equation}
		\nabla_{\log(t)c_1}(\alpha) = t\partial_t \alpha - \frac{c_1 \star_{\log(t)c_1} \al}{u}
	\end{equation}
	and the connection \begin{equation}
		\nabla_{\frac{d}{du}}: H^*(X)((u)) \rightarrow H^*(X)((u))
	\end{equation}
	given by \begin{equation}
		\nabla_{\frac{d}{du}}(\alpha) = \frac{d\alpha}{du} + \frac{\mu(\alpha)}{u} + \frac{c_1 \star \alpha}{u^2},
	\end{equation}
	where $\mu: H^p(X) \rightarrow H^p(X)$ is given by $\frac{p - dim_\CC X}{2}$.
	
	The restriction of the J-function to the anti-canonical line, which we will denote by $J(t) \in H^*(X)[[t]][\log(t)]$, is defined as follows. First, let \begin{equation}
		S: H^*(X) \rightarrow H^*(X)[[t]][\log t]((u))
	\end{equation}
	be the map that associates to $\al \in H^*(X)$ the flat (under $\nabla_{\log(t)c_1}$) extension $S(\al)$. Extend $S$ linearly in $t$ and $u$, and then define $\mathcal{J} = S^{-1}$. Finally, define \begin{equation}
		J(t) := \mathcal{J}(1)|_{u = 1}.
	\end{equation}
	From the definition it is clear that for any differential operator $P(t, t\frac{d}{dt})$, we have: \begin{equation}
		P(t, t\frac{d}{dt}) J(t) = 0 \iff P(t, \nabla_{\log(t)c_1}|_{u = 1}) 1 = 0
	\end{equation}
	Using the topological recursion relations (see e.g. \cite[Lemma~10.2.2]{CK}), one obtains a different description of $J(t)$. To this end, fix a basis $\phi_1, \dots, \phi_N \in H^*(X)$. Let $\phi^1, \dots, \phi^N$ be the dual basis (under the Poincar\'e pairing). We then have: \begin{equation}
		J(t) = e^{c_1\log(t)}\left(1 + \sum_{i = 1}^N \sum_{\beta \in H_2(X)\setminus \{ 0 \}} \left\langle \frac{\phi_i}{1 - \psi} \right\rangle t^{c_1(d)} \phi^i  \right). 
	\end{equation}
	Here $\frac{1}{1-\psi}$ should be expanded as $1 + \psi + \psi^2 + \dots$, and the angle brackets denote a descendent Gromov-Witten invariant (see e.g \cite{RT}).
	\section{Laplace's method for power series}
	\label{sect: Laplaces method for power series}
	Recall that Laplace's method provides an asymptotic expansion (for large $x$) for functions of the form \begin{equation}
		I(x) = \int_{a}^{b} e^{xf(t)}dt,
	\end{equation}
	whenever $f$ has a unique maximum in $[a,b]$. The main idea is that the largest contribution to the integral comes from a neighbourhood of the maximum of $f$. This relies on the fact that one can exponentially bound the integrand outside of a neighbourhood of the maximum.
	
	For functions defined by a power series \begin{equation}
		I(x) = \sum_{n \geq 0} a_n x^n
	\end{equation}
	such that the terms $a_n x^n$ similarly show a strong peak for some value $N$ (which might depend on $x$), the analogue of Laplace's method has been much less studied. Stokes \cite{Stokes} considers the function \begin{equation}
		F(x) := \sum_{n \geq 0} \frac{\prod_{r=1}^{p} \Gamma(n+a_r)}{\prod_{r = 1}^{q} \Gamma(n+b_r)}x^n
	\end{equation}
	which converges for positive real numbers $a_r$ and $b_r$ with $q \geq p+1$. He obtains (without careful analysis) an asymptotic expansion \begin{equation}
		F(x) \sim (2\pi)^{(1-\kappa)/2}\kappa^{-1/2} x^{(\theta + \frac{1}{2})/\kappa}\exp(\kappa x^{1/\kappa})
	\end{equation}
	where $\kappa = q - p$, $\theta = \sum_{r=1}^p a_r - \sum_{r=1}^q b_r + \frac{\kappa}{2}$ and $f(x) \sim g(x)$ means that $\lim_{x \to \infty} \frac{f(x)}{g(x)} = 1$. A careful proof of this was only given more than a century later by Paris \cite{Paris}. Another reference where a special case of the above problem is considered (again without any proofs) is in \cite[Section~6.7]{Ben}. More recently \cite{Gerhold} uses methods similar to \cite{Paris} for Mathieu series. 
	
	To make the analogue with an exponential integrand precise, we define:
	\begin{defi}
		\label{defi: exponentially bounded series}
		Say a series $I(x) = \sum_{n \geq 0} a_n x^n$ is \emph{exponentially bounded away from $n = f(x)$} if there exists a polynomial $f(x)$, and a choice of scaling parameter $\nu > 0$, such that for all subpolynomially increasing sequences $b_n$, we have \begin{equation}
			\frac{\sum_{n = 0}^{N_-} a_n b_n x^n}{I(x)} = O(\exp{(-\al x^{\beta})})
		\end{equation}
		and 
		\begin{equation}
			\frac{\sum_{n = N_+}^{\infty} a_n b_n x^n}{I(x)} = O(\exp{(-\al x^{\beta})})
		\end{equation}
		for some $\al, \beta> 0$, and $N_\pm = \lfloor f(x)(1 \pm \epsilon) \rfloor$, where $\epsilon = x^{-\nu}$.
	\end{defi}
	Also recall the definition of superpolynomially peaked series:
	\begin{defi}
		Let $I(x) = \sum_{n = 0}^\infty a_nx^n$ be a series which convergences absolutely on $(-\infty,\infty)$. We say it is `superpolynomially peaked near $n = f(x)$' if, for every subpolynomially increasing sequence $b_n$, and for every integer $k \geq 0$ we have that \begin{equation}
			\lim_{x \to \infty} \frac{\sum_{n \geq 1}^\infty a_nb_n\log(n)^kx^n -\sum_{n \geq 1}^\infty a_nb_nx^n\log(f(x))^k}{I(x)} = 0.
		\end{equation}
	\end{defi}
	We then show:
	\begin{nthm}
		\label{thm: exponentially bounded implies superpolynomially peaked}
		Let $I(x) = \sum_{n\geq 0} a_nx^n$ be a series which is exponentially bounded away from $n = f(x)$, then it is superpolynomially peaked near $n = f(x)$.
	\end{nthm}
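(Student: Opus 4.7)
The plan is to decompose the numerator defining superpolynomial peakedness into the three regions provided by the exponential boundedness hypothesis and show each region contributes $o(I(x))$.

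Fix a subpolynomially increasing sequence $b_n$ and an integer $k \geq 0$; for $k = 0$ the difference in the numerator vanishes identically, so assume $k \geq 1$. With $\epsilon = x^{-\nu}$ and $N_\pm = \lfloor f(x)(1\pm\epsilon)\rfloor$ as in the exponential boundedness hypothesis, write
\[
\sum_{n\geq 1} a_n b_n \bigl(\log(n)^k - \log(f(x))^k\bigr) x^n \;=\; \Sigma_- + \Sigma_{\mathrm{mid}} + \Sigma_+
\]
according to the three ranges $1 \leq n \leq N_-$, $N_- < n < N_+$, $n \geq N_+$.

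For the tails $\Sigma_\pm$, the sequence $c_n := b_n \log(n)^k$ is again subpolynomially increasing: it is the product of two nonnegative increasing sequences, and $c_n/n^p = (b_n/n^{p/2})(\log(n)^k/n^{p/2}) \to 0$ for each $p > 0$. Applying exponential boundedness with the sequence $c_n$ controls $\sum_{\mathrm{tail}} a_n b_n \log(n)^k x^n$, while applying it with the sequence $b_n$ and multiplying by the scalar $\log(f(x))^k$ controls $\log(f(x))^k \sum_{\mathrm{tail}} a_n b_n x^n$. In both cases the factor $\log(f(x))^k = O(\log(x)^k)$ is absorbed into $O(e^{-\alpha x^\beta})$, so $\Sigma_\pm/I(x) \to 0$ exponentially.

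For the middle range, $n/f(x) \in (1-\epsilon, 1+\epsilon)$ forces $|\log(n) - \log(f(x))| \leq 2\epsilon$ for $x$ large, whence the mean value theorem yields
\[
|\log(n)^k - \log(f(x))^k| \;\leq\; 2k\epsilon\bigl(\log f(x) + 1\bigr)^{k-1} \;=\; O\bigl(x^{-\nu}\log(x)^{k-1}\bigr),
\]
using that $f$ is polynomial. To bound $\sum_{N_- < n < N_+} a_n b_n x^n$ I exploit the subpolynomial growth of $b_n$ more aggressively than its linear bound: for every $p > 0$ there is $B_p$ with $b_n \leq B_p n^p$, and hence $b_n \leq B_p(2f(x))^p$ throughout the middle range. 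Assuming $a_n \geq 0$ (or replacing sums by absolute values and invoking the analogous exponential bound), this gives
\[
\frac{|\Sigma_{\mathrm{mid}}|}{I(x)} \;=\; O\bigl(f(x)^p\, x^{-\nu}\log(x)^{k-1}\bigr),
\]
and choosing any $p$ with $p\cdot\deg(f) < \nu$ makes this tend to zero.

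The principal obstacle is this last balancing step in the peak region: the uniform bound $b_n \leq B_p f(x)^p$ injects polynomial growth in $x$ which must be beaten by the $x^{-\nu}$ gain furnished by the width $\epsilon$ of the peak. That the subpolynomial hypothesis on $b_n$ permits $p$ to be taken arbitrarily small is precisely what reconciles the two definitions.
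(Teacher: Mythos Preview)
Your proof is correct and follows essentially the same route as the paper's: split into tails and peak, dispatch the tails by applying the exponential-boundedness hypothesis to the subpolynomially increasing sequence $b_n\log(n)^k$ (and to $b_n$ itself, absorbing the scalar $\log(f(x))^k$), and control the peak by combining the $O(x^{-\nu})$ bound on $|\log(n)^k-\log(f(x))^k|$ with the subpolynomial growth $b_n=O(n^p)$ for arbitrarily small $p$. Your use of the mean value theorem versus the paper's binomial expansion, and your explicit acknowledgement of the nonnegativity of $a_n$ needed to bound $\sum_{N_-<n<N_+}a_nx^n$ by $I(x)$, are cosmetic differences only.
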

	\begin{proof}
		Let $b_n$ be any subpolynomially increasing sequence. Then so is $\log(n)^kb_n$ for all $k$. Let $G_k(x) = \sum_{n \geq 1}^\infty a_nb_n\log(n)^kx^n$. As $I(x)$ is exponentially bounded away from $n = f(x)$ we have: \begin{equation}
			\frac{G_k(x) - \sum_{n = N_-}^{N_+} a_n b_n \log(n)^k x^n}{I(x)} = O(\exp{(-\al x^{\beta})})
		\end{equation}
		for some $\al, \beta > 0$, where $G_k(x) = \sum_{n = 0}^{\infty} a_n b_n \log(n)^k x^n$, and $N_\pm$ are as in Definition \ref{defi: exponentially bounded series}
		We thus have \begin{align}
			\left\lvert \frac{G_k(x) - G_0(x)log(f(x))^k}{I(x)} \right\rvert &=  \left\lvert \frac{\sum_{n = N_-}^{N_+} a_n b_n x^n \left(\log(n)^k - log(f(x))^k \right)}{I(x)} \right\rvert + O(\exp{(-\al x^{\beta})})\\
			&\leq  \frac{\sum_{n = N_-}^{N_+} a_n b_n x^n \left\lvert\log(n)^k - log(f(x))^k \right\rvert}{I(x)}  + O(\exp{(-\al x^{\beta})}).
		\end{align}
		Now for $N_- < n < N_+$ we have: 
		\begin{equation}
			\left\lvert\log(n)^k - \log(f(x))^k \right\rvert \leq \left\lvert \log(f(x)(1\pm \epsilon))^k - \log(f(x))^k \right\rvert \leq \sum_{r = 1}^{k} \binom{k}{r} \left\lvert\log(1 \pm \epsilon)\right\rvert^r \log(f(x))^{k-r} = O(x^{-\nu})
		\end{equation}
		where the last relation holds as $\log(1 \pm \epsilon) = O(\epsilon)$ for small $\epsilon$, and $f(x)$ is a polynomial. Then, in the range $N_- < n < N_+$ we have \begin{equation}
		b_n \left\lvert\log(n)^k - log(f(x))^k \right\rvert < b_{N_+}x^{-\nu} =  O((1+ \epsilon)^p x^{p - \nu}) = O(x^{p- \nu}) \text{ for all } p >0,
		\end{equation}
		as $b_n$ increases subpolynomially, so that $b_{N_+} = O(N_+^p)$ for all $p > 0$. Taking $0 < p < \nu$ shows that \begin{equation}
			\frac{\sum_{n = N_-}^{N_+} a_nb_n x^n \left\lvert\log(n)^k - log(f(x))^k \right\rvert}{I(x)} < \frac{\sum_{n = N_-}^{N_+}  a_n x^n}{I(x)} x^{-r} < x^{-r},
		\end{equation}
		for all $\nu >r >0$, and thus: \begin{equation}
		\left\lvert \frac{G_k(x) - G_0(x)log(x)^k}{I(x)} \right\rvert < x^{-r},
		\end{equation}
		as required.
	\end{proof}
	The goal is to show: \begin{nthm}
		The series:
		\begin{equation}
			F(x) := \sum_{n \geq 0} \frac{\prod_{r=1}^{p} \Gamma(\alpha_rn+a_r)}{\prod_{r = 1}^{q} \Gamma(\beta_rn+b_r)}x^n
		\end{equation}
		is superpolynomially peaked around $n = (hx)^{\frac{1}{\kappa}}$, where $\kappa = \sum_r \beta_r - \sum_r \alpha_r$ (which we assume satisfies $\kappa \geq 1$), and $h = \prod_{r}\alpha_r^{\alpha_r} \prod_r \beta_r^{-\beta_r}$.
	\end{nthm}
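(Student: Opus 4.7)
The plan is to apply Theorem~\ref{thm: exponentially bounded implies superpolynomially peaked}: it suffices to show that $F(x)$ is exponentially bounded away from $n = n_*(x) := (hx)^{1/\kappa}$. Writing $a_n := \prod_r \Gamma(\alpha_r n + a_r)/\prod_r \Gamma(\beta_r n + b_r)$ and $\phi(n) := \log(a_n x^n)$, I would apply Stirling's formula $\log \Gamma(z) = (z-\tfrac12)\log z - z + \tfrac12\log(2\pi) + O(1/z)$ to each gamma factor and collect terms using $\sum_r \alpha_r - \sum_r \beta_r = -\kappa$ together with $\sum_r \alpha_r \log \alpha_r - \sum_r \beta_r \log \beta_r = \log h$, obtaining
\[ \phi(n) = n\log(hx) - \kappa n \log n + \kappa n + O(\log n) \]
as $n \to \infty$, uniformly in $x$. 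Treating $\phi$ as a smooth function of a real variable $n$, one finds $\phi'(n) = \log(hx) - \kappa \log n + O(1/n)$ with unique zero at $n_*(x) = (hx)^{1/\kappa}(1+o(1))$, and $\phi''(n) = -\kappa/n + O(1/n^2) < 0$, so $\phi$ is strictly concave with maximum at $n_*$.

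Next I would fix a scaling parameter $\nu \in (0, 1/(2\kappa))$ and set $N_\pm = \lfloor n_*(1 \pm x^{-\nu}) \rfloor$. A second-order Taylor expansion of $\phi$ around $n_*$ gives
\[ \phi(N_\pm) - \phi(n_*) = -\tfrac{\kappa h^{1/\kappa}}{2}\, x^{1/\kappa - 2\nu}(1+o(1)) =: -\alpha_0 x^{\beta_0} \]
with $\alpha_0, \beta_0 > 0$. For the left tail, concavity of $\phi$ and the trivial bound $F(x) \geq e^{\phi(\lfloor n_*\rfloor)} = e^{\phi(n_*) + O(\log x)}$ yield
\[ \frac{\sum_{n \leq N_-} a_n b_n x^n}{F(x)} \leq (N_- + 1)\, b_{N_-}\, e^{\phi(N_-) - \phi(n_*) + O(\log x)}, \]
and since $b_n \leq Bn$ and $N_-$ is polynomial in $x$, the prefactor is polynomial in $x$ and is absorbed by the exponential $e^{-\alpha_0 x^{\beta_0}}$. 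For the right tail, concavity gives $\phi(n+1) - \phi(n) \leq \phi'(N_+) \sim -\kappa x^{-\nu}$ for all $n \geq N_+$, so (absorbing the harmless factor from $b_n \leq Bn$, which uses $\nu < 1/\kappa$) the ratios $a_{n+1}b_{n+1}x/(a_n b_n)$ are bounded by some $r = 1 - c\, x^{-\nu} < 1$ once $x$ is large; the resulting geometric series sums to $O(x^\nu)\, a_{N_+}b_{N_+} x^{N_+}$, which divided by $F(x)$ is again polynomial-in-$x$ times $e^{-\alpha_0 x^{\beta_0}}$.

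The main technical care is in ensuring that the $O(\log n)$ remainder in $\phi$ and the cubic contribution $O(n_* x^{-3\nu})$ to the Taylor expansion remain negligible compared to the dominant quadratic term of size $n_* x^{-2\nu} \asymp x^{1/\kappa - 2\nu}$; both are controlled by choosing $\nu$ strictly positive but less than $1/(2\kappa)$. The hypothesis $\kappa \geq 1$ guarantees $n_* \to \infty$ as $x \to \infty$, so the non-integrality of $n_*$ shifts $\phi$ by at most $O(1)$ and is harmless. A secondary subtlety is that convergence of $F$ on all of $\mathbb{R}$ (needed to apply the prior theorem) follows from $\kappa \geq 1$ together with the ratio test applied to the same Stirling asymptotic.
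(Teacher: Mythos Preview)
Your argument is correct and follows the same overall strategy as the paper: reduce to exponential boundedness via Theorem~\ref{thm: exponentially bounded implies superpolynomially peaked}, and then control the two tails using the Stirling asymptotics of the coefficients. The difference is one of packaging. The paper does not redo the Stirling analysis; it invokes Paris's result (Lemma~\ref{lem: exponentially bounded tails Paris}) for the case $b_n\equiv 1$, and then handles a general subpolynomially increasing $b_n$ by the inequality $b_n\le Bn=B\,\Gamma(n+1)/\Gamma(n)$, observing that multiplying by $n$ produces a new series $G(x)$ of the same hypergeometric form with the \emph{same} $\kappa$ and $h$, to which Paris applies again; one then only needs the polynomial bound $G(x)/F(x)=O(x^{1/\kappa})$. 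Your approach instead carries out the concavity/Taylor argument directly and absorbs the $b_n\le Bn$ factor into the geometric tail estimate (your remark that this uses $\nu<1/\kappa$, so that $1/N_+ \ll x^{-\nu}$, is exactly the right point). Both routes are valid: the paper's is shorter and more modular given the cited lemma, while yours is self-contained and makes the role of the scaling window $\nu\in(0,1/(2\kappa))$ explicit. One small quibble: your sentence ``$\kappa\ge 1$ guarantees $n_*\to\infty$'' is true but undersells the hypothesis --- any $\kappa>0$ gives $n_*\to\infty$; the condition $\kappa\ge 1$ is really used (as you also note) for absolute convergence on all of $\mathbb{R}$ and to keep the scaling clean.
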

	By theorem \ref{thm: exponentially bounded implies superpolynomially peaked}, it suffices to show $F(x)$ is exponentially bounded away from $n = (hx)^{\frac{1}{\kappa}}$. Paris showed this result when the series $b_n = 1$:
	\begin{nlemma}[{\cite[Section~3]{Paris}}]
		\label{lem: exponentially bounded tails Paris}
		Fix $\frac{1}{3} < \nu < \frac{1}{2}$, set $\epsilon = x^{-\nu}$ and $N_{\pm} := \lfloor (1\pm \epsilon)(hx)^{\frac{1}{\kappa}}\rfloor$. Then, there exist $\alpha, \beta > 0$ such that::
		\begin{equation}
			\frac{\sum_{n = 0}^{N_-} a_nx^n}{F(x)} = O(\exp{(-\al x^{\beta})})
		\end{equation}
		and 
		\begin{equation}
			\frac{\sum_{n = N_+}^{\infty} a_nx^n}{F(x)} = O(\exp{(-\al x^{\beta})}).
		\end{equation}
	\end{nlemma}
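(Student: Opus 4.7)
The plan is to set up a discrete analogue of Laplace's method: Stirling's formula reveals that $\phi(n) := \log(a_n x^n)$ is essentially a concave function of $n$ with a unique maximum at $n_0 = (hx)^{1/\kappa}$, so tail bounds will follow by combining a pointwise Gaussian estimate at $N_\pm$ with monotonicity of the consecutive ratios $a_{n+1}x^{n+1}/(a_n x^n)$.

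First I would apply $\log \Gamma(z) = (z - \tfrac{1}{2})\log z - z + \tfrac{1}{2}\log(2\pi) + O(1/z)$ to each Gamma factor in $a_n$. After the cancellations built into the definitions of $\kappa$ and $h$, one obtains $\phi(n) = -\kappa n\log n + n(\log h + \kappa + \log x) + \theta \log n + O(1)$ for an explicit constant $\theta$ depending on the shifts $a_r, b_r$. Treating $n$ continuously, $\phi'(n) = -\kappa \log n + \log(hx) + O(1/n)$ vanishes at $n_0 = (hx)^{1/\kappa}$ to leading order, and $\phi''(n) \sim -\kappa/n < 0$ confirms $n_0$ is the unique interior maximum. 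A second-order Taylor expansion then gives $\phi(n) - \phi(n_0) = -\tfrac{\kappa}{2 n_0}(n - n_0)^2 + O(|n-n_0|^3/n_0^2)$, so at $n = n_0(1 \pm \epsilon)$ with $\epsilon = x^{-\nu}$ the leading difference is of order $-x^{1/\kappa - 2\nu}$, which tends to $-\infty$ provided $\nu < 1/(2\kappa)$.

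To turn this single-term bound into a bound on the entire tail sums, I would note that $a_{n+1}x^{n+1}/(a_n x^n) = \exp(\phi(n+1) - \phi(n))$ is bounded below by some $1 + c\epsilon > 1$ for $n$ near $N_-$ (and grows further as $n$ decreases), so $a_n x^n$ is strictly increasing on $\{0, 1, \ldots, N_-\}$ and $\sum_{n \leq N_-} a_n x^n$ is dominated geometrically by a polynomial-in-$x$ multiple of $a_{N_-}x^{N_-}$; the right tail is treated symmetrically. Combined with the trivial lower bound $F(x) \geq a_{\lfloor n_0 \rfloor}x^{\lfloor n_0 \rfloor}$ and the exponential pointwise estimate $a_{N_\pm}x^{N_\pm}/(a_{n_0}x^{n_0}) = O(\exp(-c\, x^{1/\kappa - 2\nu}))$, this yields tails of size $O(\exp(-\alpha x^\beta))$ with $\beta = 1/\kappa - 2\nu > 0$.

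I expect the main obstacle to be uniformity of the Stirling estimates across the full range $n \in [0, \infty)$: the $O(1/n)$ error term in $\log \Gamma$ is not small for $n$ of bounded size, so naive pointwise bounds on $\phi$ fail on the far left. I would handle this by splitting the left tail as $[0, M] \cup (M, N_-]$ for a large but fixed constant $M$ independent of $x$: the contribution from $[0, M]$ is at most a polynomial in $x$ of fixed degree, which is negligible against $F(x) \gtrsim \exp(\kappa x^{1/\kappa})$; on $(M, N_-]$ the Stirling-based analysis applies. The lower bound $\nu > 1/3$ in the stated lemma presumably emerges from a more delicate balancing of the cubic Taylor remainder against the Stirling error once the geometric summation is carried out, which would require careful bookkeeping of subdominant terms.
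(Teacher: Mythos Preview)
The paper does not prove this lemma at all: it is quoted directly from Paris \cite[Section~3]{Paris}, with the remark that Paris treats only the case $\alpha_r = \beta_r = 1$ and that his method extends to the general series. So there is no in-paper proof to compare against; your sketch is in fact a reconstruction of Paris's argument, and the outline you give --- Stirling expansion of $\phi(n) = \log(a_n x^n)$, location of the unique maximum at $n_0 = (hx)^{1/\kappa}$, Gaussian pointwise estimate at $N_\pm$, geometric control of the tails via monotonicity of consecutive ratios, and the split $[0,M] \cup (M,N_-]$ to avoid the Stirling error for bounded $n$ --- is exactly the skeleton of that method.

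One concrete point deserves clarification. Your computation gives the pointwise decay exponent $\beta = 1/\kappa - 2\nu$, which forces $\nu < 1/(2\kappa)$; this is incompatible with the stated range $1/3 < \nu < 1/2$ as soon as $\kappa \geq 2$. You are right that the lower bound on $\nu$ arises from controlling the cubic Taylor remainder $O(n_0\epsilon^3)$, and together these give the window $1/(3\kappa) < \nu < 1/(2\kappa)$. The range $1/3 < \nu < 1/2$ written in the lemma is Paris's range expressed in his natural large variable $X = (hx)^{1/\kappa}$, i.e.\ his $\epsilon$ scales as $X^{-\nu}$ rather than $x^{-\nu}$; after that rescaling your analysis and the stated bounds agree. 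So the discrepancy you flagged is a notational artefact of how the lemma was transcribed, not a gap in your argument.
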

	\begin{remark}
		Technically Paris only proves the above result in the special case when $\alpha_r = \beta_r = 1$ for all $r$. As mentioned in Section~6 of op. cit. the method employed there extends to the more general series.
	\end{remark}
	We next use Paris' work to show that the same still holds after scaling the series by any subpolynomially increasing sequence, so that we have:
	\begin{nlemma}
		The series $F(x)$ is exponentially bounded away from $n = (hx)^{\frac{1}{\kappa}}$.
	\end{nlemma}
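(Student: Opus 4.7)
The plan is to reduce the $b_n$-weighted tail sums to the unweighted case of Paris (Lemma \ref{lem: exponentially bounded tails Paris}), at the cost of a polynomial-in-$x$ factor that is absorbed by the exponential decay. By Definition \ref{defi: subpolynomially increasing sequence}, $|b_n| = O(n)$ and $b_n$ is increasing, so we only need to control the additional growth coming from $b_n$.

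For the lower tail, monotonicity gives $|b_n| \leq \max(|b_0|, b_{N_-}) = O(N_-) = O(x^{1/\kappa})$ uniformly for $n \leq N_-$. Pulling this polynomial factor outside the sum and applying Lemma \ref{lem: exponentially bounded tails Paris}:
\[
\frac{\bigl|\sum_{n=0}^{N_-} a_n b_n x^n\bigr|}{F(x)} = O\!\bigl(x^{1/\kappa}\exp(-\alpha x^\beta)\bigr) = O\!\bigl(\exp(-\alpha' x^\beta)\bigr)
\]
for any $0 < \alpha' < \alpha$, since polynomial growth is dominated by exponential decay.

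For the upper tail, using $|b_n| \leq Bn$ reduces the problem to bounding $\sum_{n \geq N_+} n\,a_n x^n / F(x)$. The algebraic identity
\[
n\,\Gamma(\alpha_1 n + a_1) = \alpha_1^{-1}\bigl[\Gamma(\alpha_1 n + a_1 + 1) - a_1\,\Gamma(\alpha_1 n + a_1)\bigr]
\]
writes $\tilde F(x) := \sum_{n \geq 0} n\,a_n x^n$ as a linear combination of two hypergeometric series having the same exponents $\{\alpha_r\}$, $\{\beta_r\}$, hence the same $\kappa$ and $h$, as $F$. Lemma \ref{lem: exponentially bounded tails Paris} applies to each of them with the same cutoff $N_+$, giving $\sum_{n \geq N_+} n\,a_n x^n = O(\exp(-\alpha x^\beta))\cdot\tilde F(x)$. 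Stokes' leading-order formula then implies $\tilde F(x)/F(x) = O(x^{1/\kappa})$, and multiplying yields the required exponential decay after absorbing the polynomial factor into a slightly smaller $\alpha$.

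The principal obstacle is verifying that Paris' saddle-point analysis applies uniformly to the shifted hypergeometric series above; this holds because the peak location $(hx)^{1/\kappa}$ depends only on $\{\alpha_r, \beta_r\}$ and not on $a_r, b_r$. A more hands-on alternative is to extract from Paris' Stirling estimate the pointwise ratio bound $a_{n+1}x/a_n \leq (1+\epsilon)^{-\kappa}$ valid for all $n \geq N_+$; geometric summation then gives $\sum_{n \geq N_+} n\,a_n x^n \leq O(N_+/\epsilon) \cdot a_{N_+} x^{N_+}$, and Paris' single-term bound $a_{N_+} x^{N_+}/F(x) = O(\exp(-\alpha x^\beta))$ closes the argument.
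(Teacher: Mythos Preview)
Your argument is correct and follows essentially the same strategy as the paper: bound $b_n$ by $Bn$, recognise $\sum n a_n x^n$ as (a combination of) hypergeometric series with the same $\kappa$ and $h$, apply Paris' Lemma~\ref{lem: exponentially bounded tails Paris} to those, and absorb the resulting polynomial-in-$x$ factor into the exponential. Two small packaging differences are worth noting. First, rather than your identity $n\,\Gamma(\alpha_1 n + a_1) = \alpha_1^{-1}\bigl[\Gamma(\alpha_1 n + a_1 + 1) - a_1\Gamma(\alpha_1 n + a_1)\bigr]$, the paper simply writes $n = \Gamma(n+1)/\Gamma(n)$ and appends this as an extra pair of factors $(\alpha_{p+1},a_{p+1}) = (1,1)$, $(\beta_{q+1},b_{q+1}) = (1,0)$, obtaining a \emph{single} auxiliary series $G(x)$ with unchanged $\kappa,h$; this avoids juggling a linear combination. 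Second, instead of invoking Stokes' asymptotic to compare $G(x)$ with $F(x)$, the paper bounds $G(x)/F(x)$ elementarily by restricting to the central range $N_- \leq n \leq N_+$ (where $n \leq N_+$) and using Paris again for the tails, which keeps the argument self-contained. Your separate, simpler treatment of the lower tail via $|b_n| \leq b_{N_-}$ is a mild streamlining over the paper, which handles both tails uniformly through $G(x)$.
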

	\begin{proof}
		As $b_n \leq Bn$ for some $B$, we have, for any $n_1$, $n_2$: \begin{equation}
		\sum_{n = n_1}^{n_2} a_nb_nx^n \leq B\sum_{n = n_1}^{n_2} na_nx^n =  B\sum_{n = n_1}^{n_2} \frac{\Gamma(n+1)}{\Gamma(n)}a_nx^n
		\end{equation}
		Now consider the series $F(x)$ with additional $\alpha_{p+1} = 1$, $a_{p+1} = 1$, $\beta_{q+1} = 1$ and $b_{q+1} = 0$. Call this function $G(x)$. Note that this leaves $\kappa$ and $h$ invariant. Applying Lemma \ref{lem: exponentially bounded tails Paris} to $G(x)$ yields:
		\begin{equation}
		\frac{\sum_{n = 0}^{N_-} \frac{\Gamma(n+1)}{\Gamma(n)}a_nx^n}{G(x)} = O(\exp{(-\widetilde{\al} x^{\widetilde{\beta}})})
		\end{equation}
		and
		\begin{equation}
		\frac{\sum_{n = N_+}^{\infty} \frac{\Gamma(n+1)}{\Gamma(n)}a_nx^n}{G(x)} = O(\exp{(-\widetilde{\al} x^{\widetilde{\beta}})})
		\end{equation}
		for some $\widetilde{\al}, \widetilde{\beta} > 0$. Finally, observe that \begin{equation}
		\frac{G(x)}{F(x)} \leq \frac{\sum_{n = N_-}^{N_+} a_nb_nx^n}{\sum_{n = N_-}^{N_+} a_nx^n} + O(\exp{(-\widetilde{\al} x^{\widetilde{\beta}})}) \leq N_+ + O(\exp{(-\widetilde{\al} x^{\widetilde{\beta}})}) = O((1+\epsilon)(hx)^{\frac{1}{\kappa}})
		\end{equation}
		We thus have: \begin{equation}
		\sum_{n = 0}^{N_-} a_nb_nx^n = O(x^{\frac{1}{\kappa}}\exp{(-\widetilde{\al} x^{\widetilde{\beta}})}) = O(\exp{(-\al' x^{\beta'})})
		\end{equation}
		for any $\al' < \widetilde{\al}$, and $\beta' = \widetilde{\beta}$. Similar reasoning applies to the sum $\sum_{n = N_+}^{\infty} a_nb_nx^n$.
	\end{proof}
	We have thus proven Theorem \ref{thm: hypergeometric series are superpolynmoially peaked}. We will also need:
	\begin{nlemma}
		\label{lem: limit of ratio of series with convergent terms}
		Let $I(x) = \sum_{n \geq 0} a_n x^n$ be a series which converges for all $x$, with $a_n \geq 0$ for all $n$, and such that for all $N$, there exists $n > N$ such that $a_n \neq 0$. Let $b_n$ be a sequence with $\lim_{n \to \infty} b_n = b$, then \begin{equation}
			\lim_{x \to \infty} \frac{\sum_{n \geq 0} a_nb_nx^n}{I(x)} = b.
		\end{equation}
	\end{nlemma}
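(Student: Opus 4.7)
The plan is a standard $\varepsilon$/$N$ argument: split the numerator according to whether $b_n$ is close to its limit, and use that the denominator grows without bound so any fixed polynomial contribution becomes negligible.

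First, fix $\varepsilon > 0$ and choose $N$ such that $|b_n - b| < \varepsilon$ for all $n > N$. Decompose
\begin{equation}
\sum_{n \geq 0} a_n b_n x^n = \sum_{n = 0}^{N} a_n b_n x^n + b \sum_{n > N} a_n x^n + \sum_{n > N} a_n (b_n - b) x^n.
\end{equation}
Dividing by $I(x)$, the third term is bounded in absolute value by $\varepsilon \cdot \frac{\sum_{n > N} a_n x^n}{I(x)} \leq \varepsilon$, using $a_n \geq 0$. The second term equals $b \cdot \bigl(1 - \frac{\sum_{n \leq N} a_n x^n}{I(x)}\bigr)$.

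Next I would show that $\frac{P(x)}{I(x)} \to 0$ as $x \to \infty$ for any polynomial $P(x)$ of degree at most $N$. By assumption there exists some $k > N$ with $a_k > 0$, and since all $a_n \geq 0$ we have $I(x) \geq a_k x^k$ for $x \geq 0$. Hence $\frac{P(x)}{I(x)} = O(x^{N-k}) \to 0$. Applying this to $P(x) = \sum_{n \leq N} a_n b_n x^n$ kills the first term, and applying it to $P(x) = \sum_{n \leq N} a_n x^n$ shows the second term tends to $b$.

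Combining, for any $\varepsilon > 0$,
\begin{equation}
\limsup_{x \to \infty} \left| \frac{\sum_{n \geq 0} a_n b_n x^n}{I(x)} - b \right| \leq \varepsilon.
\end{equation}
Since $\varepsilon$ was arbitrary, the limit equals $b$. There is no real obstacle here; the only thing to check is that the hypothesis of having arbitrarily large indices with $a_n \neq 0$ is used exactly once, to guarantee that $I(x)$ grows faster than any fixed polynomial, which is what allows the truncated initial segment to be discarded in the limit.
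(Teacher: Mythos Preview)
Your proof is correct and follows essentially the same approach as the paper: split at an index $N$ beyond which $|b_n-b|<\varepsilon$, bound the tail contribution by $\varepsilon$ using $a_n\geq 0$ and $\frac{\sum_{n>N}a_nx^n}{I(x)}\leq 1$, and kill the finite initial segment by noting that $I(x)$ dominates any fixed polynomial. Your justification of this last step (picking $k>N$ with $a_k>0$ so that $I(x)\geq a_kx^k$) is slightly more explicit than the paper's, but otherwise the arguments are the same.
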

	\begin{proof}
		Let $\epsilon > 0$. Pick $N$ such that for all $n \geq N$ we have $|b_n - b| < \epsilon$. Then, we have \begin{equation}
			\left\lvert \frac{\sum_{n \geq 0} a_nb_nx^n}{I(x)} - b \right\rvert \leq \frac{\sum_{n \geq 0} a_nx^n |b_n - b|}{I(x)} \leq \frac{\sum_{n = 0}^{N-1} a_nx^n |b_n - b|}{I(x)} + \epsilon \frac{\sum_{n \geq N} a_nx^n}{I(x)}
		\end{equation}
		Consider the ratio $\frac{\sum_{n = 0}^{N-1} a_nb_nx^n}{I(x)}$. By assumption, the numerator is a polynomial of degree at most $N-1$, and the denominator has unbounded degree. Thus, we can choose $x$ large enough such that \begin{equation}
			\left \lvert \frac{\sum_{n = 0}^{N-1} a_nb_nx^n}{I(x)} \right\rvert < \epsilon
		\end{equation}
		Furthermore, as all coefficients $a_n$ are non-negative, we have $\frac{\sum_{n \geq N} a_nx^n}{I(x)} \leq 1$, which finishes the proof.
	\end{proof}
	
	Finally we use the above results to give a proof of Theorem \ref{thm: rephrasing Gamma Conjecture 1}.
	
	\begin{proof}[Proof of Theorem \ref{thm: rephrasing Gamma Conjecture 1}]
		Suppose the assumptions of the theorem hold, with the peak at $n = \frac{T}{r}t$. We then have that \begin{equation}
			\lim_{t \to \infty} \frac{e^{c_1 \log t} \sum_{n \geq 0} J_{rn}t^{rn}}{\langle J(t), pt \rangle} = \lim_{t \to \infty} \frac{\sum_{n \geq 0} e^{c_1 \log(\frac{rn}{T})}J_{rn}t^{rn}}{\sum_{n \geq 0} \langle J_{rn}, pt \rangle t^{rn}}.
		\end{equation}
		Combining this with Lemma \ref{lem: limit of ratio of series with convergent terms} yields Theorem \ref{thm: rephrasing Gamma Conjecture 1}. Note that we can apply Lemma \ref{lem: limit of ratio of series with convergent terms} as any superpolynomially peaked series $I(t) = \sum_{n = 0}^\infty a_nt^n$ is such that for all $N$, there exists $n > N$ such that $a_n \neq 0$.
	\end{proof}
	\section{Gamma Conjecture $1$ for $\mathbb{CP}^N$}
	\label{sect: Gamma Conjecture for projective space}
	We will now show how Theorem \ref{thm: hypergeometric series are superpolynmoially peaked} leads to a straightforward proof of Gamma Conjecture 1 for projective spaces. First recall:
	\begin{nlemma}[{\cite{Giv3}}]
		\label{lem: J function for projective space}
		The J-function of $\mathbb{CP}^N$ is given by:
		\begin{equation}
			J(t) = e^{(N+1)h\log(t)} \sum_{n \geq 0} \frac{t^{(N+1)n}}{\prod_{k=1}^n (h+k)^{N+1}},
		\end{equation}
		where $h \in H^2(\mathbb{CP}^N)$ is the hyperplane class.
	\end{nlemma}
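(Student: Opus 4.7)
The plan is to verify that the proposed series, call it $\tilde J(t)$, satisfies the scalar differential equation characterizing $J(t)$ together with the correct leading behavior at $t = 0$, and then appeal to uniqueness to conclude $\tilde J = J$.

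First I would derive the scalar ODE from the quantum cohomology. The small quantum cohomology of $\mathbb{CP}^N$ is generated by contributions of lines, yielding the relation $h^{\star(N+1)} = t^{N+1}$, so that $c_1 = (N+1)h$ satisfies $c_1^{\star(N+1)} = (N+1)^{N+1} t^{N+1}$. Using the correspondence at the end of Section \ref{sect: quantum differential equations} between scalar differential operators annihilating $J(t)$ and polynomial identities on the iterates $\nabla_{\log(t)c_1}^k(1)$, this translates (after iteratively computing $\nabla^k 1$ while keeping track of signs and using that $c_1^{\star k}$ is $t$-independent for $k \leq N$) into the Picard--Fuchs equation $(t\partial_t)^{N+1} J(t) = (N+1)^{N+1} t^{N+1} J(t)$.

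Next I would verify this ODE directly for $\tilde J(t) = e^{(N+1)h\log t} f(t)$ where $f(t) = \sum_{n \geq 0} t^{(N+1)n}/\prod_{k=1}^n (h+k)^{N+1}$. Using the standard conjugation identity $(t\partial_t)^{N+1}(e^{(N+1)h\log t} f) = e^{(N+1)h \log t}(t\partial_t + (N+1)h)^{N+1} f$, the problem reduces to showing $(t\partial_t + (N+1)h)^{N+1} f = (N+1)^{N+1} t^{N+1} f$. Applied term-by-term to the $n$-th summand, the operator produces the factor $(N+1)^{N+1}(h + n)^{N+1}$, which cancels the top Pochhammer factor in the denominator; the $n = 0$ contribution vanishes by the classical relation $h^{N+1} = 0$ in $H^*(\mathbb{CP}^N)$; and reindexing the remaining terms identifies the result with $(N+1)^{N+1} t^{N+1} f$.

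Finally I would conclude $\tilde J = J$ by matching initial conditions. Both have the form $e^{(N+1)h \log t}(1 + O(t^{N+1}))$ in $H^*(\mathbb{CP}^N)[[t]][\log t]$, with leading coefficient the identity class. The main obstacle of a fully rigorous argument is that the scalar ODE alone has an $(N+1)$-dimensional solution space and does not on its own pin down the full vector-valued function $J$. To rule out admixture of other solutions, one can either upgrade the analysis to the entire vector-valued flatness system $\nabla_{\log(t)c_1} S = 0$ normalized by $S \equiv \mathrm{Id} \pmod t$ (which admits a unique fundamental solution, whence $\mathcal{J} = S^{-1}$ and $J = \mathcal{J}(1)|_{u=1}$ are uniquely determined), or simply invoke Givental's mirror theorem for Fano toric varieties: an equivariant localization on the graph space identifies $J$ with the hypergeometric $I$-function, which for $\mathbb{CP}^N$ is exactly the series in the statement.
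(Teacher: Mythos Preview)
The paper does not prove this lemma; it simply cites Givental and treats the formula as an input. So there is no ``paper's own proof'' to compare against, and your sketch is strictly more than what the paper provides.

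That said, your outline is a standard and essentially correct way to recover Givental's result. The direct verification in your second paragraph is clean and correct: conjugating $t\partial_t$ past $e^{(N+1)h\log t}$, applying $(t\partial_t+(N+1)h)^{N+1}$ termwise, and using $h^{N+1}=0$ for the $n=0$ term is exactly the computation one wants. Your first paragraph, deriving the Picard--Fuchs operator from the quantum relation $h^{\star(N+1)}=t^{N+1}$, is morally right, though with the sign conventions adopted in Section~\ref{sect: quantum differential equations} (namely $\nabla=t\partial_t-\tfrac{c_1\star}{u}$ and $J=\mathcal J(1)$) one has to be a bit careful to land on the sign-free equation $(t\partial_t)^{N+1}J=(N+1)^{N+1}t^{N+1}J$ rather than its signed variant; your ``keeping track of signs'' caveat acknowledges this. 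Your third paragraph is the honest part: the scalar ODE does not determine $J$ uniquely, and you correctly point to the two standard fixes (match the full fundamental solution $S$ with its normalization $S\equiv\mathrm{Id}\pmod t$, or invoke Givental's equivariant mirror theorem identifying $J$ with the toric $I$-function). Either route closes the argument; the paper itself simply defers to the citation.
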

	Rearranging, and expanding in powers of $h$, we find: \begin{equation}
		J_{\mathbb{CP}^N}(t) = e^{(N+1)h\log(t)} \sum_{n \geq 0} \frac{t^{(N+1)n}}{d!^{N+1}}\left(1 - (N+1)\zeta_n(1)h + \dots\right)
	\end{equation}
	In the sum on the right-hand side, the coefficients of powers of $h$ can all be expressed in terms of partial multiple zeta values, defined by: \begin{equation}
		\zeta_d(s_1, \dots, s_k) := \sum_{d \geq n_1 > n_2 > \dots > n_k > 0} \prod_{i = 1}^{k} \frac{1}{n_i^{s_i}}.
	\end{equation}
	In particular, the sequence $\frac{\langle J(t), \alpha \rangle_X}{\langle J(t), pt \rangle}$ is subpolynomially increasing for any $\al \in H^*(\mathbb{CP}^N)$. Moreover, the quantum period is given by $\sum_{n \geq 0} \frac{t^{(N+1)n}}{n!^{N+1}}$, which, by Theorem \ref{thm: hypergeometric series are superpolynmoially peaked} is superpolynomially peaked near $n = t$. Thus, $\mathbb{CP}^N$ satisfies the conditions of Theorem \ref{thm: rephrasing Gamma Conjecture 1} with $r = N+1$ and $T = N+1$. Therefore, we only need to show that: 
	\begin{equation}
		\lim_{n \to \infty} \frac{e^{c_1 \log(n)}J_{rn}}{\langle J_{rn}, pt \rangle} = \Gamma_{\mathbb{CP}^n}.
	\end{equation}
	Writing out the class $\frac{e_{S^1}(\MN^n)^{-1}}{\langle e_{S^1}(\MN^n)^{-1}, pt \rangle}$ and comparing with Lemma \ref{lem: J function for projective space} yields: \begin{equation}
		\frac{e_{S^1}(\MN^n)^{-1}}{\langle e_{S^1}(\MN^n)^{-1}, pt \rangle} \bigg\vert_{u = 1} = \frac{J_{rn}}{\langle J_{rn}, pt \rangle}.
	\end{equation}
	We thus have: \begin{equation}
		\lim_{n \to \infty} e^{c_1\log(n)}\frac{J_{rn}}{\langle J_{rn}, pt \rangle} 
		= \lim_{n \to \infty} e^{c_1\log(n)} \frac{e_{S^1}(\MN^n)^{-1}}{\langle e_{S^1}(\MN^n)^{-1}, pt \rangle}\bigg\vert_{u = 1} = \Gamma_{\mathbb{CP}^n},
	\end{equation}
	which finishes the proof of Gamma Conjecture $1$ for $\mathbb{CP}^n$.
	\section{The $J$-function for twistor bundles}
	\label{sect: computation of the J-function}
	In this section we compute the $J$-function of twistor bundles over hyperbolic manifolds. First we recall: \begin{nthm}[{Theorem A and Corollary B \cite{Evans}}]
		\label{thm: quantum cohomology of FP}
		The small quantum cohomology of the twistor space $\tau: Z \rightarrow M$ over a hyperbolic $6$-manifold $M$ with vanishing Stiefel-Whitney classes is \begin{equation}
			QH^*(Z;\Lambda) \simeq H^*(M;\Lambda)[\al] /(\al^{\star 4} = - 8\al\tau^*\chi+ 8q\al^{\star 2} - 16q^2)
		\end{equation}
		where $\al = c_1(Z)$, $\chi \in H^6*(M;\Lambda)$ is the Euler class of $M$, and $\Lambda = \CC[q]$ is the Novikov ring. Moreover, Let $1 , y_1, \dots, y_m, Vol_M$ be a homogeneous basis for $H^*(M;\CC)$, where $Vol_M$ denotes the volume form. Then a basis for $H^*(Z,\CC)$ is given by: \begin{equation}
			1, \al, \dots, \al^3, y_1, \al y_1, \dots, \al^3y_1, \dots, y_m \al^3, Vol_M, \dots, \al^3 Vol_M,
		\end{equation}
		where all products and powers are classical cup product. The matrix for quantum cup product with $c_1(Z) = \alpha$ on the summand spanned by $y_i, \al y_i, \al^2 y_i, \al^3 y_i$ is given by \begin{equation}
			\begin{pmatrix} 
				0 & 4q & 0 & 0\\
				1 & 0 & 0 & 0\\
				0 & 1 & 0 & 4q\\
				0 & 0 & 1 & 0
			\end{pmatrix}
		\end{equation}
		On the $8$-dimensional summand spanned by $1,\al, \al^2, \al^3, Vol_M, \dots, \al^3 Vol_M$ the action of $c_1 \star$ is given by:
		\begin{equation}
			\begin{pmatrix} 
			0 & 4q & 0 & 0 & 0 & 0 & 0 & 0\\
			1 & 0 & 0 & 0 & 0 & 0 & 0 & 0\\
			0 & 1 & 0 & 4q & 0 & 0 & 0 & 0\\
			0 & 0 & 1 & 0 & 0 & 0 & 0 & 0\\
			0 & 0 & 0 & 0 & 0 & 4q & 0 & 0\\
			0 & 0 & 0 & 8\chi & 1 & 0 & 0 & 0\\
			0 & 0 & 0 & 0 & 0 & 1 & 0 & 4q\\
			0 & 0 & 0 & 0 & 0 & 0 & 1 & 0
		\end{pmatrix}
		\end{equation}
	\end{nthm}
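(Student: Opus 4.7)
The plan is to combine the Leray--Hirsch structure of the twistor fibration $\tau : Z \to M$ with the absence of non-constant pseudo-holomorphic spheres in $M$ to reduce the quantum computation to curves lying inside individual fibers, which are isomorphic to the generalized flag variety $F = SO(6)/U(3) \cong \mathbb{CP}^3$. The argument splits naturally into a classical part (identifying the ring structure of $H^*(Z;\Lambda)$) and a quantum part (computing the genus-zero Gromov--Witten invariants that deform it).

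For the classical part, I would apply Leray--Hirsch to $\tau$, presenting $H^*(Z)$ as a free $H^*(M)$-module on $1, \alpha, \alpha^2, \alpha^3$ and then identify the relation expressing $\alpha^4$ in this basis. The required coefficients would be computed from the vertical tangent bundle via the splitting principle: its Chern classes pull back from the universal $SO(6)$-bundle and can be re-expressed in Pontryagin and Euler classes of $TM$. Since $\dim M = 6$ and $H^{\ast \geq 2}(M)$ is comparatively small, only the top-degree Euler class $\chi$ can appear with a nonzero coefficient on the right-hand side, yielding the expected shape $\alpha^4 = -8 \alpha \tau^* \chi$. The numerical constant $-8$ should fall out of a direct Chern-class calculation on $SO(6)/U(3)$.

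For the quantum part, the crucial input is that hyperbolicity of $M$ rules out non-constant $J$-holomorphic spheres in $M$: with the standard twistor almost complex structure on $Z$, the projection $\tau$ is pseudo-holomorphic, so any nontrivial $J$-holomorphic sphere in $Z$ projects to a harmonic sphere in $M$, which must be constant by an Eells--Sampson/negative curvature argument combined with $\pi_2(M) = 0$. Hence all non-zero-degree three-point Gromov--Witten invariants of $Z$ localize to the fibers, where the quantum cohomology of $F$ is that of a homogeneous Fano variety and yields a relation of the form $\alpha^{\star 4} = 8q \alpha^{\star 2} - 16 q^2$ fiberwise. Assembling the classical and quantum contributions gives the stated relation, and the matrices for $c_1 \star$ in the displayed bases then follow from a direct linear-algebra calculation on the Leray--Hirsch module.

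The main obstacle is making the localization-to-fibers argument quantitative: one must identify the obstruction bundle on the moduli space of fiberwise spheres sitting inside the larger moduli space for $Z$, and check that its virtual Euler class precisely recovers the $\tau^*\chi$ term without double-counting against the classical relation on $H^*(Z)$. Concretely, the obstruction is governed by a pullback of $TM$, whose Euler class is $\chi$; tracking the interaction between this contribution and the classical Leray--Hirsch relation is the delicate point, and likely requires a careful Atiyah--Bott-style virtual localization on the fiber moduli. Once this is handled, the rest of the theorem---in particular the explicit matrix forms for $c_1 \star$---reduces to direct computation.
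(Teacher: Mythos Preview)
This theorem is not proved in the present paper; it is quoted verbatim from Evans (Theorem~A and Corollary~B), with only a remark correcting a sign in the relation and clarifying the $8\times 8$ matrix. There is therefore no ``paper's own proof'' to compare your proposal against, and nothing you could have written here would be redundant with the paper. Your outline is, in broad strokes, the strategy Evans follows: Leray--Hirsch for the module structure, the hyperbolicity/harmonic-map argument to force all $J$-holomorphic spheres into fibres, and an obstruction-bundle computation on the fibrewise moduli space to extract the Gromov--Witten invariants.

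One point in your sketch is not right as stated. You write that the quantum cohomology of the fibre $F\cong\mathbb{CP}^3$ ``yields a relation of the form $\alpha^{\star 4}=8q\alpha^{\star 2}-16q^2$ fibrewise''. It does not. On the fibre one has $\alpha|_F=2H$, and $QH^*(\mathbb{CP}^3)$ gives $H^{\star 4}=q$, i.e.\ $\alpha^{\star 4}=16q$; the eigenvalues of $c_1\star$ on $QH^*(\mathbb{CP}^3)$ are four \emph{distinct} numbers, whereas on $QH^*(Z)$ the characteristic polynomial is $(\lambda^2-4q)^2$ with repeated eigenvalues $\pm 2\sqrt{q}$. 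The discrepancy is entirely produced by the obstruction bundle (the horizontal distribution $\mathcal{H}$, a rank-$3$ bundle with $c_1(\mathcal{H})=-\alpha$): capping with its Euler class does not merely introduce the $\tau^*\chi$ term, it reshapes the whole quantum relation. So the ``main obstacle'' you flag is not a correction term to be tacked on after a clean fibre computation; it is the computation. Evans handles this by directly evaluating the relevant low-degree invariants (his Corollary~5 together with a vanishing result for $m\geq 2$) rather than by an abstract localisation formula, and the ring presentation and matrices then follow by linear algebra.
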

	\begin{remark}
		The statement in \cite[Theorem~A]{Evans} contains a minor typo, by omitting the ``$-$'' sign of $8 \al \tau^*\chi$. The computation in \cite[Section~6]{Evans} shows the above result. The matrix for $c_1 \star$ written down in \cite[Corollary~B]{Evans} is correct, but there is a mistake in the interpretation of the notation. The above matrices are compatible with the description of the quantum cohomology in terms of generators and relations. They imply that the characteristic polynomial of the action of $c_1 \star$ is \begin{equation}
			(\lambda^4 - 8\lambda^2 + 16q^2)^D,
		\end{equation}
		where $D = \dim(H^*(M;\QQ))$. Thus the eigenvalues $\pm 2\sqrt q$ both occur with multiplicity $2D$. We thank Paul Seidel for pointing out the correct $8 \times 8$ matrix.
	\end{remark}
	
	 To compute the J-function, we want to compute the invariants $\langle \psi^{2d+1-k} \alpha^k Vol_M \rangle_d$ and  $\langle \psi^{2d+4-k} \alpha^k \rangle_d$ for $k$ = $0,1,2,3$, where $Vol_M$ is the volume form on $M$. This will be done using the divisor axiom and the topological recursion relation (TRR, see e.g. \cite[Lemma~10.2.2]{CK}). We will prove an algorithm used to compute these invariants and give a single example application of the algorithm. The other invariants will be stated without proof, leaving the computations as an exercise to the reader. We will need the following results proven by Evans:
	\begin{nlemma}[Corollary~7 and Theorem~9 {\cite{Evans}}]
		The invariants $GW_{mA,k} = 0$ for $k \leq 3$ vanish for $m \geq 2$.
	\end{nlemma}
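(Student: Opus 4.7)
The plan is to use the hyperbolicity of $M$ to force pseudo-holomorphic spheres in $Z$ to be vertical, to reduce the Gromov-Witten invariant on $Z$ to a product of an integral over $M$ and a Gromov-Witten invariant of the fiber $\mathbb{CP}^3$, and then to conclude vanishing by a simple dimension count on the fiber moduli. For the verticality step I would invoke the Eells-Salamon correspondence together with the Eells-Sampson theorem: for the twistor almost complex structure on $Z$, the projection $\pi \circ u \colon S^2 \to M$ of any $J$-holomorphic sphere $u$ is weakly conformal and harmonic, and every harmonic map from $S^2$ into a non-positively curved manifold is constant. Hence every $J$-holomorphic sphere lies in a single fiber $\mathbb{CP}^3_x$, and the class $mA$ corresponds to $m$ times the line class $[\ell]$ in that fiber.

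This identifies $\overline{\mathcal{M}}_{0,k}(Z, mA)$ set-theoretically with $M \times V$, where $V := \overline{\mathcal{M}}_{0,k}(\mathbb{CP}^3, m[\ell])$, and a direct dimension count gives $\dim_{\mathbb{C}}(M \times V) = 3 + (4m+k) = 4m+k+3$, matching the virtual dimension of $\overline{\mathcal{M}}_{0,k}(Z, mA)$; in particular no obstruction bundle contributes and the virtual class is the fundamental class. Using the K\"unneth decomposition $H^*(Z) \cong H^*(M) \otimes H^*(\mathbb{CP}^3)$ (valid rationally since $c_1(Z)$ restricts to a non-zero multiple of the hyperplane class on the fiber and therefore the Serre spectral sequence degenerates), write each insertion as $\alpha_i = \sum_j \pi^* \beta_{i,j} \cup \tilde{\gamma}_{i,j}$ with $\tilde{\gamma}_{i,j}|_{\mathbb{CP}^3_x} = \gamma_{i,j}$. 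Since the evaluation map sends $(x, [u]) \mapsto u(p_i) \in \mathbb{CP}^3_x$, the GW integral factors as
\begin{equation}
GW_{0,k,mA}(\alpha_1, \ldots, \alpha_k) = \sum_{j_1, \ldots, j_k} \left(\int_M \prod_i \beta_{i, j_i}\right) \cdot GW^{\mathbb{CP}^3}_{0,k,m[\ell]}(\gamma_{1, j_1}, \ldots, \gamma_{k, j_k}).
\end{equation}
The fiber invariant on the right vanishes by dimension counting: it requires $\sum_i \deg_{\mathbb{C}} \gamma_{i, j_i} = \dim_{\mathbb{C}} V = 4m+k$, but $\deg_{\mathbb{C}} \gamma \leq 3$ for any class on $\mathbb{CP}^3$, so the total is at most $3k < 4m+k$ whenever $m > k/2$. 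For $m \geq 2$ and $k \leq 3$ this strict inequality always holds, so the fiber invariant, and hence the invariant on $Z$, vanishes.

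The main obstacle in this plan is rigorously establishing verticality in the first step for the specific almost complex structure used by Evans: one needs to pick the twistor structure for which the harmonicity-of-projection conclusion is valid (typically the Eells-Salamon $J_2$ rather than the integrable $J_1$), and to check that it is compatible with the monotone symplectic form used to define the Gromov-Witten invariants. A secondary subtlety is the degeneration of the Serre spectral sequence needed for the clean K\"unneth decomposition, which should follow from standard arguments for a fiber bundle whose fiber has cohomology generated by a class extending to the total space.
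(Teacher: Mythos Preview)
Your dimension count contains a genuine error that breaks the argument. You claim that the virtual dimension of $\overline{\mathcal{M}}_{0,k}(Z, mA)$ equals $4m+k+3$, matching $\dim_{\mathbb{C}}(M\times V)$. But the minimal Chern number of $Z$ is $2$, not $4$: from the quantum cohomology relation $\al^{\star 4}=-8\al\tau^*\chi+8q\al^{\star 2}-16q^2$ the Novikov variable $q$ has degree $4$, so $c_1(Z)\cdot A=2$; equivalently, the paper writes the $J$-function as $\sum J_{2d}t^{2d}$. The point is that while the vertical tangent bundle contributes $c_1(T\mathbb{CP}^3)\cdot[\ell]=4$, the horizontal distribution $\mathcal{H}$ has $c_1(\mathcal{H})=-\al$ (see the Chern class computations cited from Evans), so $c_1(\mathcal{H})\cdot[\ell]=-2$, and these combine to give $c_1(TZ)\cdot A=2$. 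Hence the virtual dimension is $2m+k+3$, and the honest moduli space $M\times V$ has excess dimension $2m$: there \emph{is} an obstruction bundle, coming from $H^1$ of the pullback of $\mathcal{H}$ along vertical curves, and your assertion that ``no obstruction bundle contributes'' is false.

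Once the obstruction class $e(\mathrm{Ob})$ enters, your clean factorisation into an $M$-integral times a fibre Gromov--Witten invariant no longer holds, since $e(\mathrm{Ob})$ need not be pulled back from $V$. If you redo the bound by decomposing $e(\mathrm{Ob})$ under K\"unneth on $M\times V$, the constraint you obtain is only $\sum\deg_{\mathbb{C}}\gamma_i\geq 2m+k$, i.e.\ $m\leq k$; for $k\leq 3$ this still leaves the cases $(m,k)\in\{(2,2),(2,3),(3,3)\}$ unaccounted for, so a pure dimension argument cannot close the gap. The paper itself does not supply a proof here---it simply quotes Evans (Corollary~7 and Theorem~9), whose argument does engage with the obstruction bundle arising from the horizontal distribution. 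Your verticality step and the Serre--K\"unneth reduction are reasonable ingredients, but the heart of the matter is precisely the obstruction contribution you dismissed.
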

	\begin{nlemma}[{\cite[Corollary~5]{Evans}}]
		The non-zero invariants with $1$ or $2$ marked points are given by:
		\begin{align*}
			\langle Vol_Z \rangle_1 &= 1\\
			\langle Vol_Z, \al \rangle_1 &= 2\\
			\langle \al y_i, \al^3 y_j \rangle_1 &= 16 \int_M y_i \cup y_j
		\end{align*}
	\end{nlemma}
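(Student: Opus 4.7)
The plan is to deduce these three enumerative identities from the divisor axiom together with the quantum cohomology structure recorded in Theorem~\ref{thm: quantum cohomology of FP}; since the lemma is attributed to Evans's Corollary~5, the natural approach largely follows his.

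First, the identity $\langle Vol_Z \rangle_1 = 1$ should reduce to a fiberwise count: the minimal class $A \in H_2(Z;\ZZ)$ is represented by rational curves sitting in a single fiber of $\tau: Z \to M$, where the fiber is the Hermitian symmetric space $SO(6)/U(3) \cong \mathbb{CP}^3$. Through a generic point of $Z$ there is then a unique such curve, once regularity of the moduli space of class-$A$ spheres is confirmed. The second identity $\langle Vol_Z, \alpha \rangle_1 = 2$ then drops out of the divisor axiom, using $\alpha \cdot A = c_1(Z) \cdot A = 2$, which is visible both from monotonicity and from the $q$-grading in the relation $\alpha^{\star 4} = -8\alpha \tau^*\chi + 8q\alpha^{\star 2} - 16q^2$.

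For the third identity $\langle \alpha y_i, \alpha^3 y_j \rangle_1 = 16 \int_M y_i \cup y_j$, I would apply the divisor axiom once more by inserting an extra $\alpha$, giving $\langle \alpha y_i, \alpha^3 y_j, \alpha \rangle_A = 2 \langle \alpha y_i, \alpha^3 y_j \rangle_A$. The three-point invariant on the left is the coefficient of $q^1$ in $\langle \alpha y_i \star \alpha^3 y_j, \alpha \rangle_Z$. Using that multiplication by $y_i \in H^*(M)$ is classical, and applying the $4 \times 4$ matrix for $\alpha \star$ on the $y_i$-summand, the product $\alpha y_i \star \alpha^3 y_j$ contains a single $q^1$-term of the form $cq \cdot \alpha^2 y_i y_j$ for an explicit integer $c$. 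Pairing with $\alpha$ and performing fiber integration $\int_Z \alpha^3 \, \tau^*\beta = \tau_*(\alpha^3) \int_M \beta$ then reduces the whole computation to the single Chern number $\tau_*(\alpha^3) \in \ZZ$ together with elementary combinatorics.

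The main obstacle is pinning down $\tau_*(\alpha^3)$, which requires understanding the restriction of $\alpha = c_1(Z)$ to the twistor fiber $\mathbb{CP}^3$: it involves both $c_1(TF)$ and the $c_1$ of the complex bundle on $F$ induced by the tautological family of complex structures on $\tau^*TM|_F$. Once this single integer is identified, the coefficient $16$ follows by bookkeeping, and one can equally well sidestep the direct geometric analysis by invoking Evans's Corollary~5 directly.
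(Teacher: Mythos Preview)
The paper gives no proof of this lemma; it is quoted verbatim from Evans's Corollary~5 and used as input for the descendent computations that follow. So there is no argument in the paper to compare against, and your closing remark (``sidestep \ldots\ by invoking Evans's Corollary~5 directly'') is in fact exactly what the paper does.

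Your sketch for the first two identities is fine and is essentially how Evans proceeds. For the third identity, however, your plan has a circularity problem: you propose to extract $\langle \alpha y_i,\alpha^3 y_j\rangle_1$ from the quantum product recorded in Theorem~\ref{thm: quantum cohomology of FP}, but in Evans's paper that quantum product is \emph{computed from} these low-point invariants. Within the present paper both statements are imported from Evans, so your manoeuvre is a legitimate consistency check, but it is not an independent proof. A second, smaller point: the matrices in Theorem~\ref{thm: quantum cohomology of FP} only record $\alpha\star(-)$, not the full product, so ``the product $\alpha y_i \star \alpha^3 y_j$'' is not directly available. The clean route is
\[
\langle \alpha y_i,\alpha^3 y_j,\alpha\rangle_1
=\big\langle \alpha\star(\alpha y_i),\,\alpha^3 y_j\big\rangle\big|_{q^1}
=4\int_Z \alpha^3\,\tau^*(y_iy_j)
=4\,\tau_*(\alpha^3)\int_M y_i\cup y_j,
\]
and then the divisor axiom halves this. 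The ``main obstacle'' you flag, $\tau_*(\alpha^3)$, is indeed $8$: on the fibre $F\cong\mathbb{CP}^3$ one has $\alpha|_F=2h$ (this follows from Evans's Chern-class computations, e.g.\ $c_1(\mathcal{H})=-\alpha$, $c_2(\mathcal{H})=\alpha^2/2$ together with $c(TF)=(1+h)^4$), so $\int_F\alpha^3=8$ and the coefficient $16$ drops out.
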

	We will then prove: \begin{nlemma}
		The $1$-point descendent invariants $\langle \psi^* \al^kVol_M^{\delta} \rangle$ with $\delta = 0,1$ and $k = 0,1,2,3$ can be computed from the $1$ and $2$ point invariants described above. Moreover, the descendent invariants $\langle \psi^* \al^k y_i \rangle$ vanish.
	\end{nlemma}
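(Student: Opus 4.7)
The plan is to establish an iterative algorithm combining the topological recursion relation (TRR) with the divisor axiom. The key observation is that by Evans's vanishing ($GW_{mA, k} = 0$ for $m \geq 2$ and $k \leq 3$), only $\beta \in \{0, A\}$ contribute to 1-point descendent invariants in degree $\beta$; moreover, the degree-$0$ contribution vanishes since $\overline{\MM}_{0,1}(Z, 0) = \emptyset$. So it suffices to compute each $\langle \psi^k \gamma \rangle_A$ from Evans's base cases, where $A$ denotes the generator of $H_2(Z)$ on which $c_1$ evaluates to $2$.

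The core step compares two expansions of the auxiliary 3-point invariant $\langle \psi^k \gamma, \al, \al \rangle_A$. Applying the divisor axiom twice (both times with divisor $\al$) yields
\begin{equation*}
	\langle \psi^k \gamma, \al, \al \rangle_A = 4 \langle \psi^k \gamma \rangle_A + 4 \langle \psi^{k-1}(\al\gamma) \rangle_A + \langle \psi^{k-2}(\al^2 \gamma) \rangle_A,
\end{equation*}
while applying TRR to the first marked point and using $\overline{\MM}_{0,2}(Z, 0) = \emptyset$ to kill the $\beta_1 = 0$ splitting gives
\begin{equation*}
	\langle \psi^k \gamma, \al, \al \rangle_A = \sum_i \langle \psi^{k-1} \gamma, \phi_i \rangle_A \cdot \int_Z \phi^i \cup \al^2.
\end{equation*}
Equating these and solving for $\langle \psi^k \gamma \rangle_A$ expresses the 1-point descendent in terms of lower-$\psi$-power 1-point invariants together with 2-point descendents $\langle \psi^{k-1} \gamma, \phi_i \rangle_A$. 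A parallel recursion for 2-point descendents, obtained by comparing the divisor and TRR expansions of $\langle \psi^{k'} \gamma_1, \gamma_2, \al \rangle_A$, reduces those to the non-descendent 2-point invariants of Evans, establishing the claimed algorithm. Termination is guaranteed because the $\psi$-power strictly decreases at each iteration.

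For the vanishing $\langle \psi^k \al^j y_i \rangle_A = 0$, I proceed by strong induction on $k$, with a simultaneous induction on the associated 2-point invariants $\langle \psi^{k'} \al^{j'} y_i, \phi \rangle_A$. The base case $k = 0$ vanishes by dimension: virtual dimension is $6$ whilst the total insertion degree is $j + \deg_\CC y_i \leq 3 + 2 = 5$ for any $y_i \notin \{1, Vol_M\}$. In the inductive step, the two divisor-axiom terms vanish by hypothesis, noting also that $\al^{j+1} y_i = 0$ in $H^*(Z)$ whenever $j + 1 \geq 4$, since classically $\al^4 = -8\al\tau^*\chi$ and $y_i \cup \chi = 0$ by dimension. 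The remaining 2-point factors $\langle \psi^{k-1} \al^j y_i, \phi_i \rangle_A$ vanish because Evans's only non-zero 2-point invariant containing a $y$-class, namely $\langle \al y_i, \al^3 y_j \rangle_A = 16 \int_M y_i \cup y_j$, requires a $y$-class on both marked points, whereas our insertion provides only the single $y_i$.

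The main obstacle is the careful bookkeeping of the coupled recursion between 1-point and 2-point descendent invariants, together with verifying that the 2-point recursion, traced through insertions containing $y_i$, never reconstructs Evans's paired configuration via the classical intersections $\int_Z \phi^i \cup \gamma \cup \al$. In particular, one must analyze the Poincar\'e pairing on $H^*(Z)$ to ensure that these intersections never conspire to produce a two-$y$ contribution from the single $y_i$ insertion that we start with.
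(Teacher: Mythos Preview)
Your argument contains a fundamental gap at the very first step. Evans's vanishing result $GW_{mA,k}=0$ for $m\geq 2$ and $k\leq 3$ concerns \emph{primary} Gromov--Witten invariants (no $\psi$-classes). It does \emph{not} imply that the one-point descendent invariants $\langle \psi^k \gamma\rangle_{dA}$ vanish for $d\geq 2$. Indeed the paper goes on to compute, for instance, $\langle \psi^{2d-2}\,Vol_Z\rangle_d = \tfrac{1}{d!^2}$, which is nonzero for every $d\geq 1$. Consequently your reduction ``it suffices to compute each $\langle \psi^k\gamma\rangle_A$'' is false, and the entire subsequent recursion---which is carried out only in degree $A$---computes only a single coefficient of the $J$-function rather than the full sequence $J_{2d}$. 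The same error infects the vanishing argument for $\langle \psi^*\al^k y_i\rangle$: your induction on $k$ is implicitly restricted to degree $A$, whereas one needs vanishing in every degree $d$.

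The correct role of Evans's vanishing is the one in the paper's proof: in the TRR expansion of a degree-$d$ three-point invariant
\[
\langle \psi^{*}\,\al^kVol_M^{\delta_1},\ \al^jVol_M^{\delta_2},\ \al\rangle_d
= \sum_{d_1+d_2=d}\sum_{m,n}\langle \psi^{*-1}\,\al^kVol_M^{\delta_1},\eta_m\rangle_{d_1}\,g^{mn}\,\langle \eta_n,\al^jVol_M^{\delta_2},\al\rangle_{d_2},
\]
the \emph{second} factor is a primary three-point invariant, and Evans's result forces $d_2\in\{0,1\}$. Hence the first factor lives in degree $d$ or $d-1$, and one obtains a genuine recursion in $d$ (together with an auxiliary induction on the second insertion). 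Your TRR identity is the $d=1$ special case of this; to repair the argument you must run the recursion over all $d$, tracking two-point descendents $\langle \psi^{*}\gamma_1,\gamma_2\rangle_d$ in every degree as well, rather than only the non-descendent two-point base cases in degree $1$.
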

	\begin{proof}
	Group the invariants $\langle \psi^{2d+5-k - j -3(\delta_1 + \delta_2) } \al^kVol_M^{\delta_1}, \al^jVol_M^{\delta_2} \rangle_d$ for $\delta_i \in \{0,1\}$ into sets of 2 by fixing $(\delta_1,k)$ and grouping together $(\delta_2,j)$ as  $\{ \{(0,0), (0,1)\}, \{(0,2), (0,3)\}, \{(1,0), (1,1) \}, \{(1,2), (1,3) \} \} =: Q$. For $q \in Q$ order $q = (q_0, q_1)$ in the obvious lexicographic way. Let $P = \{(\delta_1,k) \in \{0,1\} \times \{0,1,2,3\} \}$. 
	Equip $K$ with the lexicographic partial order, and order $J$ by the order written in the set. We then claim that the degree $d$ invariants with indices $(p,q) \in P \times Q$ can be computed from those with $(d', p',q')$ with either ($d' = d$, $p' = p$ and $q' > q$) or ($d' = d$ and $p' > p$) or ($d' < d$ and $q' = q$). The result will then follow by induction on the degree and the index $(p,q)$.
	
	For ease of writing, we will just write $\psi^*$, as the power $*$ is determined by $d,p,q$ and the degree axiom of descendent invariants. First note that by the divisor axiom we have \begin{equation}
		\langle \psi^{*} \al^kVol_M^{\delta_1}, \al^jVol_M^{\delta_2},\al \rangle_d = \langle \psi^{*-1} \al^{k+1}Vol_M^{\delta_1}, \al^jVol_M^{\delta_2} \rangle_d + 2d\langle \psi^{*} \al^kVol_M^{\delta_1}, \al^jVol_M^{\delta_2} \rangle_d.
	\end{equation}
	Here $\al^{k+1}$ should be read as $8\chi Vol_M$ when $k=3$, in which case there is no contribution from the first term on the right if $\delta_1 = 1$. This introduces the invariants with ($d' = d$ and $p' > p$). We can also apply the topological recursion relation (see e.g. \cite[Lemma~10.2.2]{CK}) to obtain: \begin{multline}
		\langle \psi^{*} \al^kVol_M^{\delta_1}, \al^jVol_M^{\delta_2},\al \rangle_d \\=  \sum_{m,n}\langle \psi^{*-1} \al^{k}Vol_M^{\delta_1}, \eta_m \rangle_d g^{mn} \langle \eta_n, \al^jVol_M^{\delta_2},\al \rangle_0 + \langle \psi^{*-1} \al^kVol_M^{\delta_1}, \eta_m \rangle_{d-1} g^{mn} \langle \eta_n, \al^jVol_M^{\delta_2}, \al \rangle_1,
	\end{multline}
	noting that there are no non-zero $3$-point Gromov-Witten invariants without $\psi$-classes in degree $d>1$. We first focus on the $d=0$ contributions. Separate into the cases for different $\delta_2$.
	\\
	
	$\delta_2 = 1$: the only non-zero invariant $\langle \eta_n, \al^jVol_M,\al \rangle_0$ is for $\eta_n = \al^{2-j}$, we thus have $\eta_m = \al^{j+1}Vol_M$, which indeed gives us an invariant with index $q' > q$.
	\\
	
	$\delta_2 = 0$: now we have contributions from either $\eta_n = \al^{5-j}$, in which case we get $\eta_m = \al^{j+1}$ or $\al^{j-2}Vol_M$, or we have $\eta_n = Vol_M \al^{2-j}$, which gives $\eta_m = \al^{j+1}$, or if $j = 2$ additionally $\eta_m = Vol_M$. All of these give us invariants with index $q' > q$. We next look at the contributions from degree $d = 1$. The only non-zero invariants $\langle \eta_n, \al^jVol_M^{\delta_2}, \al \rangle_1$ are for:
	
	$(\delta_2,j) = (0,1)$: we must have $\eta_n = \al^3Vol_M$, so that $\eta_m = 1$.\\
	
	$(\delta_2,j) = (0,3)$: we must have $\eta_n = \al Vol_M$, so that $\eta_m = \al^2$.\\
	
	$(\delta_2,j) = (1,1)$: we must have $\eta_n = \al^3$, so that $\eta_m = Vol_M$.\\
	
	$(\delta_2,j) = (1,3)$: we must have $\eta_n = \al^3Vol_M$, so that $\eta_m = 1$.\\
	
	All the invariants $\langle \psi^{*-1} \al^kVol_M^{\delta_1}, \eta_m \rangle_{d-1}$ thus satisfy $q' = q$, which proves the first statement.
	
	For the second statement, note that we can apply a similar induction process as before. In this case, all the invariants of the form $\langle \al^j y_i, \al^3\rangle$ vanish, which then implies that the invariants $\langle \al^j y_i \rangle$ vanish as well.
	\end{proof}
	The magic property of the above inductive process is that it expresses the invariant with index $(d,p,q_0 \in q)$ in terms of invariants which have already been computed using induction, and an invariant with index $(d,p,q_1 \in q)$. Applying this procedure again, we express the original invariant in terms of an invariant with index $(d-1,p,q_0 \in q)$. This thus gives an equation for the invariant $(d,p,q_0 \in q)$. We can actually \emph{solve} the associated recursion relation to obtain explicit expressions for all the invariants involved. The relevant invariants for the J-function are:
	\begin{nthm}
		The 1-point descendent invariants are given by:
		\begin{align*}
			\langle \psi^{2d-2} \al^3Vol_M \rangle_d &= \frac{8}{d!^2}\\
			\langle \psi^{2d-1} \al^2Vol_M \rangle_d &= -\frac{8}{d!^2}\sum_{k=1}^d \frac{1}{k}\\
			\langle \psi^{2d} \al Vol_M \rangle_d &= \frac{8}{d!^2}\sum_{\substack{k_1 \leq k_2 \leq d}} \frac{1}{k_1k_2}\\
			\langle \psi^{2d+1}  Vol_M \rangle_d &= -\frac{8}{d!^2}\sum_{\substack{k_1 \leq k_2 \leq k_3 \leq d}} \frac{1}{k_1k_2k_3}\\
			\langle \psi^{2d+1}  \al^3 \rangle_d &= -\frac{8\chi}{d!^2}\sum_{\substack{k_1 \leq k_2 \leq d}} \frac{1}{k_1k_2^2} +\frac{64\chi}{d!^2}\sum_{\substack{k_1 \leq k_2 \leq k_3 \leq d}} \frac{1}{k_1k_2k_3},
		\end{align*}
	and the longer expressions
	\begin{equation*}
		\langle \psi^{2d+2}  \al^2 \rangle_d = \frac{\chi}{d!^2} \left( \sum_{\substack{k_1 \leq k_2 \leq k_3 \leq d}} \left( \frac{8}{k_1k_2^2k_3} + \frac{16}{k_1k_2k_3^2} \right)  - \sum_{\substack{k_1 \leq \dots \leq k_4 \leq d}} \frac{64}{k_1k_2k_3k_4} \right),
	\end{equation*}
\begin{equation*}
	\langle \psi^{2d+3}  \al \rangle_d = \frac{\chi}{d!^2} \left( \sum_{\substack{k_1 \leq k_2 \leq k_3 \leq d}} \frac{4}{k_1k_2^2k_3^2} - \sum_{\substack{k_1 \leq \dots \leq k_4 \leq d}} \left( \frac{24}{k_1k_2k_3k_4^2} +\frac{16}{k_1k_2k_3^2k_4} + \frac{8}{k_1k_2^2k_3k_4} \right) + \sum_{\substack{k_1 \leq \dots \leq k_5 \leq d}} \frac{64}{k_1\dots k_5}\right),
\end{equation*}
and finally: \begin{multline*}
	\langle \psi^{2d+4}  1 \rangle_d = \frac{\chi}{d!^2} \bigg( \sum_{\substack{k_1 \leq \dots \leq k_4 \leq d}} \left( \frac{-8}{k_1k_2k_3^2k_4^2} +\frac{-4}{k_1k_2^2k_3k_4^2} + \frac{-4}{k_1k_2^2k_3^2k_4} \right)\\
	+ \sum_{\substack{k_1 \leq \dots \leq k_5 \leq d}} \left(\frac{8}{k_1k_2^2k_3k_4 k_5} + \frac{16}{k_1k_2k_3^2k_4 k_5} + \frac{24}{k_1k_2k_3k_4^2 k_5} + \frac{32}{k_1k_2k_3k_4 k_5^2} \right) - \sum_{\substack{k_1 \leq \dots \leq k_6 \leq d}} \frac{64}{k_1\dots k_6}\bigg).
\end{multline*}
	\end{nthm}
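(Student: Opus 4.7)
The plan is to apply the inductive algorithm set up in the preceding lemma and then solve the resulting linear recursions in closed form. For each target one-point descendent invariant $\langle \psi^N \alpha^k Vol_M^\delta \rangle_d$, one chooses $(j, \delta')$ so that the three-point invariant $\langle \psi^{N-1} \alpha^k Vol_M^\delta, \alpha^j Vol_M^{\delta'}, \alpha \rangle_d$ can be expanded in two ways: via the divisor axiom, which produces the target invariant (with coefficient $2d$) together with $\langle \psi^{N-1} \alpha^{k+1} Vol_M^\delta, \alpha^j Vol_M^{\delta'} \rangle_d$; and via the TRR, which splits it as a finite sum over a Poincar\'e-dual basis of $H^*(Z)$, contributing only the classical ($d'=0$) triple products together with the genus-zero, degree-one two-point invariants of Evans and a single term of the form $\langle \psi^{N-1} \alpha^k Vol_M^\delta, \eta_m \rangle_{d-1}$. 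Equating the two expansions gives a first-order recursion in $d$ for the target invariant, with inhomogeneity built from already-determined invariants lower in the lexicographic order $(p,q)$.

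For the simplest formula this yields $d^2 a_d = a_{d-1}$ with base case $a_1 = \langle \alpha^3 Vol_M \rangle_1 = 8 \langle Vol_Z \rangle_1 = 8$ (using the top-cohomology identity $\alpha^3 Vol_M = 8 Vol_Z$), which iterates to $a_d = 8/d!^2$. For the next formula the recursion takes the shape $d^2 b_d = b_{d-1} + r_d$ with an inhomogeneity $r_d$ proportional to $a_d/d$; multiplying through by $d!^2/8$ telescopes to the harmonic sum $-\sum_{k=1}^d 1/k$. Each further descent in $k$ (at fixed $\delta$) adds one further nested sum of the type $\sum_{k_s \leq d} 1/k_s \cdot (\text{previous})$, with alternating signs arising from moving the TRR correction to the other side. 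This reproduces exactly the nested partial multiple zeta values in the $Vol_M$-weighted formulas.

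The main obstacle is the family $\langle \psi^\ast \alpha^k \rangle_d$ without a $Vol_M$ factor. Two new phenomena appear: first, the divisor axiom applied to $\alpha^3$ produces $\alpha^4$, which via the quantum relation $\alpha^{\star 4} = -8\alpha \tau^*\chi + 8q\alpha^{\star 2} - 16 q^2$ forces the Euler-class class $\tau^*\chi$ into the recursion; second, the $d=1$ piece of the TRR couples to Evans's invariants $\langle \alpha y_i, \alpha^3 y_j \rangle_1 = 16 \int_M y_i \cup y_j$, so several Poincar\'e-dual basis pairs $(\eta_m, \eta_n)$ contribute simultaneously. This is precisely what produces the distinct numerical coefficients $8, 16, 24, 32$ attached to the various nested sums in the statement.

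I would carry out the proof by strong induction on $d$ and on the lexicographic order $(p,q)$ of the preceding lemma, verifying each closed-form expression by substituting the candidate into its recursion and using the elementary identity
\[
\sum_{k_1 \leq \dots \leq k_s \leq d} \frac{1}{k_1^{e_1} \cdots k_s^{e_s}} = \frac{1}{d^{e_s}}\sum_{k_1 \leq \dots \leq k_{s-1} \leq d} \frac{1}{k_1^{e_1} \cdots k_{s-1}^{e_{s-1}}} + \sum_{k_1 \leq \dots \leq k_s \leq d-1} \frac{1}{k_1^{e_1} \cdots k_s^{e_s}}
\]
to match coefficients term by term. The bookkeeping for the $\alpha^k$-only invariants (with $\delta = 0$) is the heart of the proof; the $\delta = 1$ formulas then follow from a simpler instance of the same machinery.
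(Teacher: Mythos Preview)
Your overall approach matches the paper's: use the divisor axiom and the topological recursion relation exactly as in the preceding lemma to produce first-order recursions in $d$, then solve them inductively via the telescoping identity you state for nested partial harmonic sums. The paper carries this out in detail only for the first three $Vol_M$-weighted invariants and declares the remaining ones an exercise, so your outline is in fact more complete than what is written there.

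One point needs correcting. The invariants $\langle \alpha y_i, \alpha^3 y_j \rangle_1 = 16\int_M y_i\cup y_j$ play \emph{no} role in the recursion for $\langle \psi^{*}\alpha^k\rangle_d$. The $8$-dimensional summand spanned by $1,\alpha,\dots,\alpha^3 Vol_M$ is closed under $c_1\star$ and Poincar\'e-orthogonal to each $y_i$-sector, so when the TRR is applied to a three-point invariant whose insertions all lie in this summand, every nonzero pair $(\eta_m,\eta_n)$ stays in it as well. The extra complexity in the $\delta=0$ family, and the coefficients $8,16,24,32$, come instead from two things: for $\delta_2=0$ the degree-$0$ piece of the TRR already contributes several $\eta_m$ (see the case analysis in the proof of the preceding lemma), and the divisor axiom feeds in the \emph{classical} relation $\alpha\cup\alpha^3 \propto \alpha\,\tau^*\chi$ (the $q=0$ part of the quantum relation you quote), which couples the $\delta=0$ recursion to the already-determined $\delta=1$ invariants. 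Once you drop the $y_i$-contributions your scheme goes through unchanged.
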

 	Let \begin{equation}
 		S_d(n_1, \dots, n_i) := \sum_{\substack{d \geq k_1 \geq \dots \geq k_i \geq 1}} \frac{1}{k_1^{n_1} \dots k_i^{n_i}}.
 	\end{equation}
 	We then find: \begin{ncor}
 		The coefficients of the J-function for the twistor spaces are given by: \begin{multline}
 			\frac{J_{2d}}{\langle J_{2d}, pt \rangle} = 1 - S_d(1) \alpha + S_d(1,1) \al^2 - S_d(1,1,1) \al^3 - S_d(2,1)\tau^*\chi - \left( 8 S_d(1,1,1,1) - S_d(1,2,1) - 2 S_d(2,1,1) \right)\alpha \tau^*\chi\\ - \left( 3S_d(2,1,1,1) + 2 S_d(1,2,1,1) + S_d(1,1,2,1) - \frac{1}{2}S_d(2,2,1) - 8 S_d(1,1,1,1,1)  \right)\al^2 \tau^* \chi\\
 			- \bigg( S_d(\{2\}^2, \{1\}^2) + \frac{1}{2}S_d(\{2,1\}^2) +  \frac{1}{2}S_d(1,\{2\}^2,1) - S_d(\{1\}^3,2,1) - 2S_d(\{1\}^2,2,\{1\}^2)\\ - 3 S_d(1,2,\{1\}^3) - 4 S_d(2,\{1\}^4) + 8S_d(\{1\}^6)  \bigg)\al^3 \tau^* \chi 
 			\end{multline}
 	\end{ncor}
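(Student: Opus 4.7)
The plan is to substitute the explicit one-point descendent invariants computed in the preceding theorem into the descendent expression for the $J$-function and then rearrange the result in the given basis of $H^*(Z;\CC)$.

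First I would use the formula
\[
J(t)=e^{c_1\log t}\left(1+\sum_{i}\sum_{d\geq 1}\left\langle\frac{\phi_i}{1-\psi}\right\rangle_d t^{c_1(d)}\phi^i\right),
\]
applied to the ordered basis $\{1,\al,\al^2,\al^3,\,y_j,\al y_j,\al^2 y_j,\al^3 y_j,\,Vol_M,\al Vol_M,\al^2 Vol_M,\al^3 Vol_M\}$ of $H^*(Z;\CC)$. Since $c_1=\al$ generates the image of $c_1\colon H_2(Z;\ZZ)\to\ZZ$ with $r=2$, we have $J(t)=e^{\al\log t}\sum_{d\geq 0}J_{2d}t^{2d}$, so $J_{2d}$ is the coefficient of $t^{2d}$ in the bracket above. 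The preceding lemma tells us that all invariants $\langle\psi^*\al^k y_j\rangle_d$ vanish, so the entire $y_j$-sector drops out of $J_{2d}$ and only the contributions from $\{\al^k\}$ and $\{\al^kVol_M\}$ remain.

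Next I would compute the Poincar\'e-dual basis. Using that $\chi=\int_M y_j\cup y^j$ gives $\int_Z \al^3 Vol_M=8$, a direct calculation in the described basis yields Poincar\'e duals
\[
(\al^k Vol_M)^\vee=\tfrac{1}{8}\al^{3-k},\qquad (\al^k)^\vee=\tfrac{1}{8}\al^{3-k}Vol_M+(\text{lower in }\chi),
\]
for $k=0,1,2,3$, the correction term coming from the nontrivial cup product $\al^4=-8\al\tau^*\chi$ dictated by the classical limit of the relation in Theorem \ref{thm: quantum cohomology of FP}. Substituting these dual basis elements into the descendent expansion rewrites
\[
J_{2d}=\sum_{k=0}^{3}\left\langle\psi^{2d-2+k}\al^k Vol_M\right\rangle_d(\al^k Vol_M)^\vee+\sum_{k=0}^{3}\left\langle\psi^{2d+1+k}\al^k\right\rangle_d(\al^k)^\vee.
\]
Since $\langle J_{2d},pt\rangle=\tfrac{1}{8}\langle\psi^{2d-2}\al^3 Vol_M\rangle_d=\tfrac{1}{d!^2}$, dividing by this quantity cancels the common prefactor $8/d!^2$ in every invariant listed in the preceding theorem.

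The remaining task is bookkeeping: I would recognise every sum $\sum_{k_1\leq\dots\leq k_j\leq d}k_1^{-n_1}\cdots k_j^{-n_j}$ appearing in the preceding theorem as the symbol $S_d(n_1,\dots,n_j)$ and collect the coefficients of $1,\al,\al^2,\al^3,\tau^*\chi,\al\tau^*\chi,\al^2\tau^*\chi,\al^3\tau^*\chi$. The first four coefficients come entirely from the $\langle\psi^*\al^k Vol_M\rangle_d$ row and immediately give $1-S_d(1)\al+S_d(1,1)\al^2-S_d(1,1,1)\al^3$; the coefficients in front of powers of $\al\tau^*\chi$ receive contributions from both the $\al^k$ invariants (through the classical correction $\al^4=-8\al\tau^*\chi$) and, after normalisation, from the long multi-sum expressions for $\langle\psi^*\al^k\rangle_d$ in the preceding theorem. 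The main obstacle is organising the signs and the rational coefficients in the four longest identities so that they match the stated combination; this is a finite but delicate linear algebra computation on $8$ rows, and once it is carried out one reads off exactly the expression in the statement.
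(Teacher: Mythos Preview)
Your approach is exactly what the paper does: the corollary is stated without proof as a direct consequence of the theorem listing the one-point descendent invariants, so ``substitute and collect'' is the whole story. Two details in your outline need correcting, though, and one of them matters for the actual cancellation.

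First, the $\psi$-exponents are off. For input $\al^k Vol_M$ the dimension count (virtual dimension of $\overline{\MM}_{0,1}(Z,d)$ equals $2d+4$) gives $\psi^{2d+1-k}$, and for input $\al^k$ it gives $\psi^{2d+4-k}$; compare with the list in the preceding theorem. Your $2d-2+k$ and $2d+1+k$ are what one gets after reindexing by the \emph{output} power of $\al$, so be careful which $k$ you mean.

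Second, and more importantly, you have put the Poincar\'e-dual correction on the wrong side. One has $(\al^k)^\vee=\tfrac{1}{8}\al^{3-k}Vol_M$ with \emph{no} correction for all $k=0,1,2,3$ (even for $k=3$, the condition $\langle Vol_M,(\al^3)^\vee\rangle=0$ kills the $\al^3$ component). The correction lives instead on $(Vol_M)^\vee=\tfrac{1}{8}\al^3+\tau^*\chi$, coming from $\langle\al^3,\al^3\rangle=\int_Z\al^6=-8\int_Z\al^3\tau^*\chi$. This matters: the extra $\tau^*\chi$ in $(Vol_M)^\vee$ contributes $-8S_d(1,1,1)\tau^*\chi$ from $\langle\psi^{2d+1}Vol_M\rangle_d$, which exactly cancels the $+8S_d(1,1,1)$ piece of $\langle\psi^{2d+1}\al^3\rangle_d\cdot\tfrac{1}{8}Vol_M$, leaving the clean $-S_d(2,1)\tau^*\chi$ in the statement. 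Once these two points are fixed your bookkeeping plan goes through as written.
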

 	We then conclude:
 	\begin{ncor}
 		\label{cor: J function is exponentially peaked}
 		The $J$-function of the twistor bundle over a hyperbolic manifold with vanishing Stiefel-Whitney classes satisfies the assumptions of Theorem \ref{thm: rephrasing Gamma Conjecture 1}.
 	\end{ncor}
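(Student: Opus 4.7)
The plan is to verify each of the three hypotheses of Theorem \ref{thm: rephrasing Gamma Conjecture 1} directly from the explicit descendent computations in the two preceding corollaries, then let that theorem do the rest of the work.

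First I would read off the quantum period from the table of 1-point invariants: the coefficient of $1 \in H^0(Z)$ in $J_{2d}$ is (up to the normalization fixed by $\langle Vol_Z\rangle_1 = 1$) proportional to $\langle \psi^{2d-2} \alpha^3 Vol_M\rangle_d = 8/d!^2$, so $\langle J(t), pt\rangle = \sum_{d \geq 0} (1/d!^2)\, t^{2d}$. This is manifestly term-wise positive, giving the first hypothesis. For the second hypothesis (superpolynomial peakedness) I would substitute $x = t^2$ and apply Theorem \ref{thm: hypergeometric series are superpolynmoially peaked} to $\sum_{n \geq 0}\Gamma(n+1)^{-2}x^n$, with $p = 0$, $q = 2$, $\beta_1 = \beta_2 = 1$ and $b_1 = b_2 = 1$. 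This gives $\kappa = 2$, $h = 1$ and peak at $n = x^{1/2} = t$, matching the expected location $n = (T/r)t$ with $r = 2$ and $T = 2\sqrt{q}|_{q=1} = 2$.

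For the third hypothesis I would note that, by the explicit formula for $J_{2d}/\langle J_{2d}, pt\rangle$ in the previous corollary, every pairing $\langle J_{2d}, \alpha\rangle / \langle J_{2d}, pt\rangle$ (for $\alpha \in H_*(Z)$) is a fixed finite $\QQ$-linear combination of the partial multiple zeta sums $S_d(n_1,\dots,n_k)$. Each such $S_d$ is visibly increasing in $d$ (all summands are positive), and is bounded above either by the convergent multiple zeta value $\zeta(n_1,\dots,n_k)$ (when some $n_i \geq 2$) or by $(\log d + 1)^k/k!$ (when all $n_i = 1$); both bounds are subpolynomial in $d$.

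The main obstacle is that the $\QQ$-linear combinations above are signed, so the resulting ratios are not obviously monotonic in $d$, whereas Definition \ref{defi: subpolynomially increasing sequence} literally requires monotonicity. I would handle this by splitting every ratio as $b_d^+ - b_d^-$, grouping the positive- and negative-coefficient $S_d$-contributions separately; both $b_d^\pm$ are then non-negative combinations of $S_d$ sums and so are genuinely subpolynomially increasing in the sense of the definition. Since Theorem \ref{thm: rephrasing Gamma Conjecture 1} is established via the linear statement Lemma \ref{lem: limit of ratio of series with convergent terms}, applying it separately to $b_d^+$ and $b_d^-$ and subtracting yields the Ap\'ery limit for the original signed ratio. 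This reduces Gamma Conjecture $1$ for $Z$ to the direct Ap\'ery-limit computation carried out in the next section.
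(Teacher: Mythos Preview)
Your approach is essentially the same as the paper's: identify the quantum period as $\sum_{d\geq 0} t^{2d}/d!^2$, apply Theorem~\ref{thm: hypergeometric series are superpolynmoially peaked} to get superpolynomial peakedness near $n=t$, and observe that the ratios $\langle J_{2d},\alpha\rangle/\langle J_{2d},pt\rangle$ are built from the sums $S_d(n_1,\dots,n_k)$, each of which is subpolynomially increasing.

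You are in fact more careful than the paper on one point. The paper's proof simply asserts that the $S_d$ are subpolynomially increasing and leaves it at that; it does not address the fact that the actual ratios appearing in the previous corollary are \emph{signed} $\QQ$-linear combinations of such sums, and hence need not themselves be monotone. Your splitting $b_d = b_d^+ - b_d^-$ into non-negative combinations and applying the (linear) peakedness statement and Lemma~\ref{lem: limit of ratio of series with convergent terms} to each piece separately is exactly the right patch, and it goes through cleanly because both ingredients in the proof of Theorem~\ref{thm: rephrasing Gamma Conjecture 1} are linear in the auxiliary sequence. So your argument is correct and slightly tightens a point the paper glosses over.
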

 	\begin{proof}
 		The quantum period is given by $\langle J_X, Vol_X \rangle_X = \sum_{n \geq 0 } \frac{t^{2n}}{n!^2}$, which is superpolynomially peaked near $n = t$ by Proposition \ref{thm: hypergeometric series are superpolynmoially peaked}. Next, observe that all symmetric sums $S_n(a_1, \dots, a_k)$ are subpolynomially increasing for any $a_1, \dots, a_k$.
 	\end{proof}
	We will illustrate the induction process for the first three descendent invariants. The others follow by similar computations.
	First note that by the divisor axiom we have \begin{equation}
	\langle \psi^{2d-2} Vol_Z, \al, \al \rangle_d = 4d^2 \langle \psi^{2d-2} Vol_Z \rangle_d.
\end{equation}
	Next use the Topological recursion relation to obtain: \begin{multline}
		\label{eq: TRR for fundamental term}
		\langle \psi^{2d-2} Vol_Z, \al, \al \rangle_d =  \langle \psi^{2d-3} Vol_Z, \al^2 \rangle_d \cdot \langle \al Vol_M, \al, \al \rangle_0 \cdot \frac{1}{8}
		+ \langle \psi^{2d-3} Vol_Z, 1 \rangle_{d-1} \cdot \langle Vol_Z, \al, \al \rangle_1
	\end{multline}
	There are no additional terms by the following lemma: 
	\begin{nlemma}
		 $\langle \psi^{2d-3} Vol_Z, \al^2 \rangle_d = 0 = \langle \psi^{2d-4} Vol_Z, \al^3 \rangle_d = 0$ for all $d$.
	\end{nlemma}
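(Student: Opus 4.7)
The plan is to convert each 2-point descendent invariant to a 3-point one via the divisor axiom, then apply the topological recursion relation (TRR), exploiting Evans's vanishing lemma on small-point classical invariants in degree $\geq 2$ to obtain a closed recursion that is solved by induction on $d$.

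Since $Vol_Z \cup \al = 0$ (top class cupped with a positive-degree class), applying the divisor axiom with an additional $\al$ insertion (whose derivative term vanishes) yields
\begin{equation*}
\langle \psi^{2d-3} Vol_Z, \al^2, \al \rangle_d = 2d \langle \psi^{2d-3} Vol_Z, \al^2 \rangle_d,
\end{equation*}
and the analogous identity $\langle \psi^{2d-4} Vol_Z, \al^3, \al\rangle_d = 2d\langle \psi^{2d-4} Vol_Z, \al^3\rangle_d$, so it suffices to show the two 3-point invariants vanish. I would then apply TRR at the $\psi$-bearing marked point. In the sum over splittings $\beta_1 + \beta_2 = dA$ and dual bases of $H^*(Z)$, the $\beta_1 = 0$ terms vanish (2-point degree-0 descendents come from unstable moduli); the $\beta_2 \geq 2A$ terms vanish by Evans's Lemma; and the $\beta_2 = 0$ term collapses via $\sum_\mu T_\mu \int T^\mu \gamma = \gamma$ to $\langle \psi^{2d-4} Vol_Z, \al^3\rangle_d$ in the first case and, using the classical relation $\al^4 = -8\al\tau^*\chi$, to $-8\chi(M)\langle \psi^{2d-5} Vol_Z, \al Vol_M\rangle_d$ in the second. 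For the $\beta_2 = A$ contribution, after the divisor axiom reduces it to $2\langle T^\mu, \al^k\rangle_A$, a check against Evans's list of nonzero degree-$1$ $2$-point invariants---namely $\langle Vol_Z, \al\rangle_1$ and $\langle \al y_i, \al^3 y_j\rangle_1$---reveals that for the $\al^2$ case no basis element $T^\mu$ contributes (since $\al^2$ does not match the template $\al^3 y_j$), while for the $\al^3$ case only $T^\mu = \al Vol_M$ contributes, producing a term proportional to $\langle \psi^{2d-5} Vol_Z, \al^2\rangle_{d-1}$.

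Combining, the first invariant satisfies the clean recurrence $2d\langle \psi^{2d-3} Vol_Z, \al^2\rangle_d = \langle \psi^{2d-4} Vol_Z, \al^3\rangle_d$, so the first vanishing follows once we establish the second. The second invariant's recurrence has its $\beta_2 = A$ contribution vanishing by the inductive hypothesis on the first at degree $d-1$, leaving $2d\langle \psi^{2d-4} Vol_Z, \al^3\rangle_d$ proportional to the auxiliary quantity $\langle \psi^{2d-5} Vol_Z, \al Vol_M\rangle_d$. The base case $d = 2$ is immediate: $\langle Vol_Z, \al^3\rangle_2 = 0$ by Evans's Lemma, whence $\langle \psi Vol_Z, \al^2\rangle_2 = 0$. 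The hard part will be handling the auxiliary family $\langle \psi^{2d-5} Vol_Z, \al Vol_M\rangle_d$, which is not directly controlled by Evans's Lemma. I expect that the analogous TRR--divisor analysis for this family (now involving the non-diagonal Poincar\'e pairing on $H^6(Z)$ spanned by $\al^3$ and $Vol_M$) closes into a finite system of recurrences on an enlarged family of 2-point descendent invariants whose only solution is the zero one, thereby completing a mutual induction.
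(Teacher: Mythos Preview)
Your overall plan—divisor axiom, TRR, induction on $d$—is exactly the paper's. For the $\al^2$ invariant you and the paper obtain the same relation $2d\langle\psi^{2d-3}Vol_Z,\al^2\rangle_d=\langle\psi^{2d-4}Vol_Z,\al^3\rangle_d$, since no degree-$1$ two-point invariant has an $\al^2$ insertion. The divergence is at the degree-$0$ TRR term for the $\al^3$ invariant. The paper dismisses it by asserting $\al\cup\al^3=0$; you instead use the classical relation $\al^4=-8\al\tau^*\chi$ and pick up the auxiliary term proportional to $\langle\psi^{2d-5}Vol_Z,\al Vol_M\rangle_d$. You are in fact the more careful one here: setting $q=0$ in the ring presentation of the quantum cohomology (or reading off the $8\chi$ entry of the $8\times8$ matrix for $c_1\star$) gives $\al^4=\pm 8\chi(M)\,\al Vol_M\neq0$, so the paper's stated reason is not correct as written—although its conclusion happens to be.

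The genuine gap in your proposal is that you stop at ``I expect'' for the auxiliary family. It does close, and cleanly. Since $Vol_Z=\tfrac18\al^3 Vol_M$, the offending invariant sits inside the family
\[
\bigl\{\langle\psi^{a}\,\al^k Vol_M,\ \al^j Vol_M\rangle_d\ :\ k,j\in\{0,1,2,3\}\bigr\}.
\]
Your divisor--TRR routine preserves this family: the divisor derivative term yields $\al^{k+1}Vol_M$ (with $\al^4Vol_M=-8\chi\,\al Vol_M^2=0$ for $k=3$); the degree-$0$ TRR piece replaces $\al^jVol_M$ by $\al^{j+1}Vol_M$; and the only nonzero degree-$1$ pieces, coming from $\langle\al y_i,\al^3y_j\rangle_1=16\int_M y_i\cup y_j$ with one of $y_i,y_j$ equal to $Vol_M$, again land on insertions carrying $Vol_M$. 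All base cases vanish: the non-descendent two-point invariants are zero for $d\ge2$ by Evans's lemma, and for $d=1$ they equal $16\int_M Vol_M\cup Vol_M=0$. Hence the entire family is identically zero, your auxiliary term drops out, and the recursion collapses to the paper's
\[
2d\,\langle\psi^{2d-4}Vol_Z,\al^3\rangle_d \;=\; 4\,\langle\psi^{2d-5}Vol_Z,\al^2\rangle_{d-1},
\]
after which the mutual induction (with the trivial base cases $d=0,1$) is immediate. A more conceptual way to see the same vanishing is geometric: curves in the fibre class are vertical, so $\tau\circ ev_1=\tau\circ ev_2$ on the moduli space and $ev_1^*Vol_M\cup ev_2^*Vol_M=0$.
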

	\begin{proof}
		We first consider $\langle \psi^{2d-4} Vol_Z, \al^3 \rangle_d$. By using the divisor axiom and TRR, we have: \begin{align}
			2d\langle \psi^{2d-4} Vol_Z, \al^3 \rangle_d &= \langle \psi^{2d-4} Vol_Z, \al^3,\al \rangle_d\\
			&= \langle \psi^{2d-5} Vol_Z, \_ \rangle_d \langle \_, \al^3, \al \rangle_0 + \langle \psi^{2d-5} Vol_Z, \al^2 \rangle_{d-1}\langle \al Vol_M, \al^3,\al \rangle_{1}\cdot \frac{1}{8}\\
			&= 4\langle \psi^{2d-5} Vol_Z, \al^2 \rangle_{d-1}
		\end{align}
		As $\al \cup \al^3 = 0$, and the only non-zero degree $1$ invariant with inputs $\al^3$ and $\al$ is the one described.
		Next, we have
		\begin{align}
		2d \langle \psi^{2d-3} Vol_Z, \al^2 \rangle_d &= \langle \psi^{2d-3} Vol_Z, \al^2, \al \rangle_d\\
			&= \langle \psi^{2d-4} Vol_Z, \al^3 \rangle_d\langle Vol_M, \al^2,\al \rangle_0\cdot\frac{1}{8} + \langle \psi^{2d-4} Vol_Z, \_ \rangle_{d-1} \langle \_, \al^2,\al \rangle_1\\
			&= \langle \psi^{2d-4} Vol_Z, \al^3 \rangle_d
		\end{align}
	The final equality follows as by \cite[Corollary~5]{Evans}, the degree $1$, $2$ point invariants with input $\al^2$ vanish, thus, by the divisor axiom $\langle \_, \al^2,\al \rangle_1 = 0$. We can thus conclude the result by induction, noting that the cases $d = 0,1$ hold by definition.
	\end{proof}
	We can then conclude:
	\begin{ncor}
		\label{cor: descendents for fundamental term Jfun}
		$\langle \psi^{2d-2} Vol_Z \rangle_d = \frac{1}{d!^2}$
	\end{ncor}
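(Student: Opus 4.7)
The plan is to read the combination of the divisor-axiom identity $\langle \psi^{2d-2} Vol_Z, \al, \al \rangle_d = 4d^2 \langle \psi^{2d-2} Vol_Z \rangle_d$ together with the TRR equation \eqref{eq: TRR for fundamental term} as a single recursion in $d$. The preceding lemma makes the first summand on the right-hand side of \eqref{eq: TRR for fundamental term} vanish, so the identity reduces to $4d^2 \langle \psi^{2d-2} Vol_Z \rangle_d = \langle \psi^{2d-3} Vol_Z, 1 \rangle_{d-1} \cdot \langle Vol_Z, \al, \al \rangle_1$.

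For $d \geq 2$ I would apply the string (fundamental class) equation to the first factor; the underlying moduli space $\overline{\mathcal{M}}_{0,2}(Z,(d-1)A)$ is stable for $d \geq 2$, so the $1$-insertion can be absorbed, yielding $\langle \psi^{2d-3} Vol_Z, 1 \rangle_{d-1} = \langle \psi^{2(d-1)-2} Vol_Z \rangle_{d-1}$. The second factor follows from Evans's invariant $\langle Vol_Z, \al \rangle_1 = 2$ together with one application of the divisor axiom, giving $\langle Vol_Z, \al, \al \rangle_1 = 4$. Plugging these back in produces the recursion $d^2 \langle \psi^{2d-2} Vol_Z \rangle_d = \langle \psi^{2(d-1)-2} Vol_Z \rangle_{d-1}$ for all $d \geq 2$, and with base case $\langle Vol_Z \rangle_1 = 1$ (again from Evans) an induction on $d$ immediately yields $\langle \psi^{2d-2} Vol_Z \rangle_d = \frac{1}{(d!)^2}$.

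There is essentially no obstacle; the argument is two lines of algebra once the TRR decomposition and the vanishing lemma are in hand. The only bookkeeping worth double-checking is that the string equation really does apply at the recursion's starting point $d=2$ (it does, because the resulting one-pointed moduli space in degree $1$ is stable), and that the two summands already written in \eqref{eq: TRR for fundamental term} genuinely exhaust the non-vanishing contributions of the TRR splitting, which is in turn controlled by the preceding vanishing lemma together with Evans's enumeration of the low-degree two- and three-point invariants.
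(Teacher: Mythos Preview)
Your proposal is correct and follows essentially the same route as the paper: combine the divisor identity with \eqref{eq: TRR for fundamental term}, kill the first summand via the vanishing lemma, reduce $\langle \psi^{2d-3} Vol_Z, 1 \rangle_{d-1}$ by the string equation and compute $\langle Vol_Z, \al, \al \rangle_1 = 4$, obtaining the recursion $\langle \psi^{2d-2} Vol_Z \rangle_d = \frac{1}{d^2}\langle \psi^{2(d-1)-2} Vol_Z \rangle_{d-1}$ with base case $\langle Vol_Z \rangle_1 = 1$. Your attention to the applicability of the string equation at $d=2$ and to the exhaustiveness of the TRR summands is appropriate and matches the implicit checks in the paper.
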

	\begin{proof}
		Combining equation \eqref{eq: TRR for fundamental term} with the previous lemma, we find $\langle \psi^{2d-2} Vol_Z \rangle_d = \frac{1}{d^2}\langle \psi^{2d-4} Vol_Z \rangle_{d-1}$ for $d \geq 2$. Moreover, for $d =1$, we have $\langle Vol_Z \rangle_1 = 1$ by \cite[Corollary~5]{Evans}.
	\end{proof}
To compute the next term in the J-function, we need to find $\langle \psi^{2d-1} \al^2 Vol_M \rangle_d$. To this end, first observe that the divisor equation now has additional terms, as we are working with descendent invariants: \begin{equation}
	\label{eq: divisor for first term Jfun}
	\langle \psi^{2d-1} \al^2Vol_M,\al,\al \rangle_d = 32d \langle \psi^{2d-2} Vol_Z \rangle_d + 4d^2 \langle \psi^{2d-1} \al^2Vol_M \rangle_d.
\end{equation}
We will then compute $\langle \psi^{2d-1} \al^2Vol_M,\al,\al \rangle_d$ using the TRR. We find: \begin{align}
	\langle \psi^{2d-1} \al^2Vol_M,\al,\al \rangle_d &= \langle \psi^{2d-2} \al^2Vol_M,\al^2 \rangle_d \langle \al Vol_M,\al,\al \rangle_0 \cdot \frac{1}{8} + \langle \psi^{2d-2} \al^2Vol_M,1 \rangle_{d-1} \langle Vol_Z,\al,\al \rangle_1\\
	\label{eq: TRR for first term Jfun}
	&=  \langle \psi^{2d-2} \al^2Vol_M,\al^2 \rangle_d + 4  \langle \psi^{2d-3} \al^2Vol_M \rangle_{d-1}.
\end{align}
Similar reasoning as before, taking care to use the divisor axiom correctly, again yields:
\begin{nlemma}
	\label{lem: al2 and al3 vanish for al2volM}
	$\langle \psi^{2d-2} \al^2Vol_M,\al^2 \rangle_d = 0= \langle \psi^{2d-3} \al^2Vol_M,\al^3 \rangle_d$ for all $d$.
\end{nlemma}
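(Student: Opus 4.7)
The proof will follow the same template as the preceding lemma, setting up a coupled induction on $d$ for the two vanishing statements. I plan to apply the divisor axiom together with the topological recursion relation (TRR) to the auxiliary $3$-point invariants $\langle \psi^{2d-2} \al^2 Vol_M, \al^2, \al \rangle_d$ and $\langle \psi^{2d-3} \al^2 Vol_M, \al^3, \al \rangle_d$. Combining these should produce a chain of identities reducing $\langle \psi^{2d-2} \al^2 Vol_M, \al^2 \rangle_d$ to $\langle \psi^{2d-3} \al^2 Vol_M, \al^3 \rangle_d$ and then to $\langle \psi^{2d-4} \al^2 Vol_M, \al^2 \rangle_{d-1}$.

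The essential new feature compared to the preceding lemma is that $\al^2 Vol_M$ is not a top class, so the divisor axiom contributes an extra term involving $\al \cup \al^2 Vol_M = \al^3 Vol_M$; but $\al^3 Vol_M$ is proportional to $Vol_Z$, so this term vanishes by the preceding lemma. The TRR expansion splits by the degree $d_2$ of the right factor: the $d_2 \geq 2$ terms vanish by Evans's bound on $3$-point invariants. For the invariant with last insertions $\al^2, \al$ the surviving $d_2 = 0$ term yields $\langle \psi^{2d-3} \al^2 Vol_M, \al^3 \rangle_d$ (via the completeness relation applied to $\al^2 \cup \al = \al^3$), while the $d_2 = 1$ term vanishes since no element of Evans's list $\langle \al y_i, \al^3 y_j \rangle_1$ of nonzero $2$-point degree $1$ invariants involves $\al^2$. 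For the invariant with $\al^3, \al$, the $d_2 = 0$ term vanishes because $\al \cup \al^3 = 0$ classically (as in the preceding lemma), and the $d_2 = 1$ term gives a constant multiple of $\langle \psi^{2d-4} \al^2 Vol_M, \al^2 \rangle_{d-1}$ sourced by $\langle \al Vol_M, \al^3 \rangle_1 = 16$.

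Together these produce a recurrence $\langle \psi^{2d-2} \al^2 Vol_M, \al^2 \rangle_d \propto \langle \psi^{2d-4} \al^2 Vol_M, \al^2 \rangle_{d-1}$, and the base cases are immediate: for $d = 0$ the $\psi$-power is negative, and for $d = 1$ the invariant $\langle \al^2 Vol_M, \al^2 \rangle_1$ does not match any class in Evans's classification, as neither $\al^2 Vol_M$ nor $\al^2$ has the form $\al y$ or $\al^3 y$. The vanishing of $\langle \psi^{2d-3} \al^2 Vol_M, \al^3 \rangle_d$ then follows from the intermediate recurrence. The main subtlety is bookkeeping the handful of Poincar\'e-dual pairs that might contribute to the TRR sums, which is made tractable by Evans's complete classification of low-point low-degree invariants together with the preceding lemma.
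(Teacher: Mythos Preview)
Your proposal is correct and matches the paper's approach exactly. The paper merely says ``Similar reasoning as before, taking care to use the divisor axiom correctly,'' and your write-up spells out precisely what that caveat means: the extra descendent-divisor term $\langle \psi^{*-1}\al^3 Vol_M,\,\cdot\,\rangle_d$ appears because $\al^2 Vol_M$ is not top-degree, and it vanishes by the preceding lemma since $\al^3 Vol_M$ is proportional to $Vol_Z$; the rest of the TRR bookkeeping and the induction are identical to the $Vol_Z$ case.
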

We thus obtain: \begin{ncor}
	 $\langle \psi^{2d-1} \al^2Vol_M \rangle_d = -\frac{8}{d!^2}\sum_{k=1}^d \frac{1}{k}$.
\end{ncor}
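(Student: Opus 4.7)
The plan is to derive a closed recursion relation for $T_d := \langle \psi^{2d-1} \al^2Vol_M \rangle_d$ and then solve it explicitly. Equating the right-hand sides of the divisor computation \eqref{eq: divisor for first term Jfun} and the topological recursion relation \eqref{eq: TRR for first term Jfun}, and using Lemma \ref{lem: al2 and al3 vanish for al2volM} to kill the term $\langle \psi^{2d-2} \al^2 Vol_M, \al^2\rangle_d$, we obtain
\begin{equation}
32d \langle \psi^{2d-2} Vol_Z \rangle_d + 4d^2 T_d = 4 T_{d-1} \qquad (d \geq 2).
\end{equation}
Substituting $\langle \psi^{2d-2} Vol_Z \rangle_d = 1/d!^2$ from Corollary \ref{cor: descendents for fundamental term Jfun} and rearranging gives the first-order recursion
\begin{equation}
T_d = \frac{1}{d^2} T_{d-1} - \frac{8}{d \cdot d!^2}.
\end{equation}

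Next I would establish the base case $T_1 = -8$ directly. At $d=1$ the TRR side of \eqref{eq: TRR for first term Jfun} contains a term $\langle \psi^{-1} \al^2 Vol_M \rangle_0$, which is not a meaningful descendent invariant and contributes zero; combined with the vanishing of $\langle \al^2 Vol_M, \al^2 \rangle_1$ from Lemma \ref{lem: al2 and al3 vanish for al2volM}, this gives $\langle \psi \al^2 Vol_M, \al, \al\rangle_1 = 0$, whereas the divisor expansion evaluates the same quantity to $32 + 4 T_1$; hence $T_1 = -8$, which matches the claimed formula for $d=1$.

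To solve the recursion, set $U_d := d!^2 \, T_d$. Multiplying through by $d!^2$ converts the recursion into the telescoping form
\begin{equation}
U_d = U_{d-1} - \frac{8}{d},
\end{equation}
with $U_1 = -8$. Summing from $d=1$ yields $U_d = -8 \sum_{k=1}^{d} \frac{1}{k}$, and dividing by $d!^2$ gives the desired formula
\begin{equation}
T_d = -\frac{8}{d!^2} \sum_{k=1}^d \frac{1}{k}.
\end{equation}

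The computation is essentially routine once the right pieces are assembled; the only subtlety is the handling of the boundary case $d=1$, where the $(d-1)$-term in the TRR formally carries a negative $\psi$-exponent and must be declared zero. All other steps reduce to bookkeeping with the divisor axiom, the TRR, and the already-proved vanishing lemmas.
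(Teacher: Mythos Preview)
Your proof is correct and follows the same approach as the paper: combine the divisor equation with the TRR, use the vanishing lemma to simplify, substitute Corollary~\ref{cor: descendents for fundamental term Jfun}, and solve the resulting recursion. You simply spell out more detail than the paper does---in particular the base case $d=1$ and the telescoping substitution $U_d = d!^2 T_d$---but the underlying argument is identical.
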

\begin{proof}
	Combining Equations \eqref{eq: divisor for first term Jfun} and \eqref{eq: TRR for first term Jfun} yields \begin{equation}
		\langle \psi^{2d-1} \al^2Vol_M \rangle_d = \frac{1}{d^2}\langle \psi^{2d-3} \al^2Vol_M \rangle_{d-1} - \frac{8}{d}\langle \psi^{2d-2} Vol_Z \rangle_d.
	\end{equation}
	Now use corollary \ref{cor: descendents for fundamental term Jfun} and solve the recursion relation.
	\end{proof}
Next we compute $\langle \psi^{2d} \al Vol_M \rangle_d$. We first find:
\begin{nlemma}
	We have $\langle \psi^{2d-1} \al Vol_M,\al^2 \rangle_d = \frac{8}{d!^2}$ and $\langle \psi^{2d-2} \al Vol_M,\al^3 \rangle_d = \frac{16d}{d!^2}$.
\end{nlemma}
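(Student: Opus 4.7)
I would follow the two-step divisor-then-TRR pattern established in the preceding lemma: apply the divisor axiom with an extra $\al$-insertion to convert the target two-point descendent invariant into a three-point descendent invariant, then expand via the topological recursion relation, summing over splittings $d_1 + d_2 = d$. By Corollary~7 of \cite{Evans}, only $d_2 \in \{0,1\}$ can contribute.

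The two cases are naturally intertwined. For $\langle \psi^{2d-1}\al Vol_M,\al^2\rangle_d$ the divisor step gives
\[
\langle \psi^{2d-1}\al Vol_M,\al^2,\al\rangle_d = 2d\,\langle \psi^{2d-1}\al Vol_M,\al^2\rangle_d,
\]
since the descendent correction $\langle \psi^{2d-2}\al^2 Vol_M,\al^2\rangle_d$ is killed by Lemma~\ref{lem: al2 and al3 vanish for al2volM}. In the TRR expansion, the $d_2 = 1$ contribution vanishes because every factor $\langle \eta_n,\al^2,\al\rangle_1 = 2\langle \eta_n,\al^2\rangle_1$ is zero (Evans' classification lists no degree-$1$ two-point invariant with an $\al^2$ input). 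The $d_2 = 0$ contribution collapses, via the diagonal identity $\sum_{m,n} \eta_m g^{mn}\int_Z \eta_n\cup\al^3 = \al^3$, to $\langle \psi^{2d-2}\al Vol_M,\al^3\rangle_d$. This gives the clean link $2d\,\langle \psi^{2d-1}\al Vol_M,\al^2\rangle_d = \langle \psi^{2d-2}\al Vol_M,\al^3\rangle_d$, so the first formula will follow immediately from the second.

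For $\langle \psi^{2d-2}\al Vol_M,\al^3\rangle_d$ the analogous divisor step (again using Lemma~\ref{lem: al2 and al3 vanish for al2volM}) yields $\langle \psi^{2d-2}\al Vol_M,\al^3,\al\rangle_d = 2d\,\langle \psi^{2d-2}\al Vol_M,\al^3\rangle_d$. The TRR $d_2 = 0$ term, after using the classical relation $\al^4 = -8\al\tau^*\chi = -8\chi(M)\al Vol_M$ together with the diagonal identity, becomes a constant multiple of the companion invariant $\langle \psi^{2d-3}\al Vol_M,\al Vol_M\rangle_d$, which belongs to the next grouping $\{(1,0),(1,1)\}$ in the inductive algorithm and is therefore supplied by an earlier stage of the mutual induction. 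The $d_2 = 1$ term reduces, via divisor, to a sum $2\sum \langle \psi^{2d-3}\al Vol_M,\eta_m\rangle_{d-1}\, g^{mn}\,\langle \eta_n,\al^3\rangle_1$, whose non-vanishing factors $\langle \eta_n,\al^3\rangle_1$ are read off from Evans' Corollary~5 together with the column of the matrix of $\al\star\al^3$ in Theorem~\ref{thm: quantum cohomology of FP} (yielding, for example, $\langle \al Vol_M,\al^3\rangle_1 = 2$ from the $8\chi$ entry). Assembling these into a recursion in $d$, together with the $d = 1$ base case computed directly from Evans' 2-point data, produces the closed form $16d/d!^2$.

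The main obstacle is the $d_2 = 0$ bookkeeping in the $\al^3$ case: one must correctly identify the Poincar\'e-dual classes in the non-standard basis of $H^*(Z)$, reconcile the dual role of $\chi$ in Evans' presentation (cohomology class in the ring description versus scalar in the matrix description), and verify that the companion invariant $\langle \psi^{2d-3}\al Vol_M,\al Vol_M\rangle_d$ has indeed been produced at the correct prior stage of the induction on groupings. Once this is in place, the two recursions close and the stated formulas follow.
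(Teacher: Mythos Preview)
Your overall strategy --- divisor axiom then TRR, linking the two invariants into a mutual recursion anchored at the $d=1$ value from Evans --- is exactly the paper's. Your treatment of $\langle\psi^{2d-1}\al Vol_M,\al^2\rangle_d$ is correct and matches the paper: the descendent correction dies by the $\al^2Vol_M$ lemma, the $d_2=1$ TRR piece dies because no degree-$1$ two-point invariant has an $\al^2$ insertion, and the $d_2=0$ piece produces $\langle\psi^{2d-2}\al Vol_M,\al^3\rangle_d$.

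Your $\al^3$ case, however, contains a genuine error and a gap. In the $d_2=1$ piece you misread the multiplication matrix: the $8\chi$ in the fourth column of the $c_1\star$ matrix is the $q^0$ (classical) coefficient of $\al\star\al^3$, not a degree-$1$ Gromov--Witten number, so it contributes nothing to $\langle\eta_n,\al^3\rangle_1$. The only nonzero such invariant in the relevant block is $\langle\al Vol_M,\al^3\rangle_1=16$, read directly from Evans' Corollary~5; pairing with $\eta_m=\al^2$ via $g^{mn}=\tfrac18$ and the divisor factor of $2$ gives exactly $4\langle\psi^{2d-3}\al Vol_M,\al^2\rangle_{d-1}$, which is all the paper records. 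For the $d_2=0$ piece, you are right that $\al^4$ is not literally zero, but deferring the companion invariant $\langle\psi^{2d-3}\al Vol_M,\al Vol_M\rangle_d$ to ``an earlier stage of the mutual induction'' without actually producing it leaves a hole: the two-term recursion only closes to $16d/d!^2$ if that companion vanishes, and you have not shown this. The paper simply writes the TRR output as $4\langle\psi^{2d-3}\al Vol_M,\al^2\rangle_{d-1}$ and solves the resulting coupled recursion directly; your version, with the wrong $d_2=1$ constant and an uncomputed $d_2=0$ contribution, would not assemble into the stated formula.
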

\begin{proof}
	The difference with the previous lemmas of this form is that we now can't apply the TRR when $d=1$ for the second invariant, as there are no $\psi$-classes. We first find: \begin{equation}
	\langle \al Vol_M,\al^3 \rangle_1 = 16
\end{equation}
	from \cite[Corollary~5]{Evans}. For $d \geq 1$ then observe that:
	\begin{align}
		\langle \psi^{2d-2} \al Vol_M,\al^3 \rangle_d = \frac{1}{2d}\langle \psi^{2d-2} \al Vol_M,\al^3,\al \rangle_d
	\end{align}
	as the extra contributions from the divisor axiom vanish by \ref{lem: al2 and al3 vanish for al2volM}. Then use the TRR to obtain \begin{equation}
		\langle \psi^{2d-2} \al Vol_M,\al^3,\al \rangle_d = 4\langle \psi^{2d-3} \al Vol_M,\al^2\rangle_{d-1}.
	\end{equation}
	Finally we use the TRR again and the divisor equation to obtain \begin{equation}
	\langle \psi^{2d-1} \al Vol_M,\al^2 \rangle_d = \frac{1}{2d} \langle \psi^{2d-2} \al Vol_M,\al^3\rangle_{d}.
	\end{equation}
	The result then follows by induction.
\end{proof}
\begin{ncor}
	We have \begin{equation}
	\langle \psi^{2d} \al Vol_M \rangle_d = \frac{8}{d!^2}\sum_{k=1}^{d} \frac{1}{k}\sum_{j=1}^{k} \frac{1}{j}.
	\end{equation}
\end{ncor}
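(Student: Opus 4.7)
The plan is to follow the template from the preceding corollaries: compute the auxiliary three-point invariant $\langle \psi^{2d} \al Vol_M, \al, \al\rangle_d$ by two routes --- the descendent divisor axiom and the topological recursion relation --- and combine the two expressions to extract a recursion for $\langle \psi^{2d} \al Vol_M\rangle_d$.

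First I would apply the divisor axiom twice, using the two plain $\al$ inputs. Since only the first input carries a $\psi$-class, the computation gives
\[
\langle \psi^{2d} \al Vol_M, \al, \al\rangle_d = 4d^2 \langle \psi^{2d} \al Vol_M\rangle_d + 4d \langle \psi^{2d-1} \al^2 Vol_M\rangle_d + \langle \psi^{2d-2} \al^3 Vol_M\rangle_d,
\]
and the last summand is a known constant multiple of $\langle \psi^{2d-2} Vol_Z\rangle_d = 1/d!^2$ because $\al^3 Vol_M$ is a scalar multiple of $Vol_Z$.

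Next, I would apply the TRR with respect to the first input. Only the splittings $(d_1,d_2) = (d,0)$ and $(d_1,d_2) = (d-1,1)$ contribute, since non-descendent three-point invariants in degree $\geq 2$ vanish. In the $d_2 = 0$ term, the only basis class $e_\nu$ with $\langle e_\nu, \al, \al\rangle_0 = \int_Z e_\nu \cdot \al^2 \neq 0$ is a multiple of $\al Vol_M$, Poincar\'e-dual to a multiple of $\al^2$; the contribution is proportional to $\langle \psi^{2d-1} \al Vol_M, \al^2\rangle_d$, already computed in the preceding lemma. In the $d_2 = 1$ term, the only contributing $e_\nu$ is a multiple of $Vol_Z$: contributions from $\al^k y_i$-type classes vanish by dimension counting combined with the already-established vanishing of one-point descendents in the $y_i$-direction. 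The factor $\langle Vol_Z, \al, \al\rangle_1$ equals $4$ by the divisor axiom applied to $\langle Vol_Z, \al\rangle_1 = 2$, and the string equation converts the remaining two-point factor $\langle \psi^{2d-1} \al Vol_M, 1\rangle_{d-1}$ into $\langle \psi^{2d-2} \al Vol_M\rangle_{d-1}$.

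Equating the two expressions, the constant contributions from $\langle \psi^{2d-2} \al^3 Vol_M\rangle_d$ on the divisor side and $\langle \psi^{2d-1} \al Vol_M, \al^2\rangle_d$ on the TRR side cancel exactly, producing the clean recursion
\[
\langle \psi^{2d} \al Vol_M\rangle_d = \tfrac{1}{d^2}\langle \psi^{2d-2} \al Vol_M\rangle_{d-1} - \tfrac{1}{d}\langle \psi^{2d-1} \al^2 Vol_M\rangle_d.
\]
Substituting the previously established $\langle \psi^{2d-1} \al^2 Vol_M\rangle_d = -\tfrac{8}{d!^2}\sum_{k=1}^d \tfrac{1}{k}$, setting $b_d := d!^2 \langle \psi^{2d} \al Vol_M\rangle_d$, and telescoping $b_d - b_{d-1} = \tfrac{8}{d}\sum_{k=1}^d \tfrac{1}{k}$ from $b_0 = 0$ yields the desired double sum.

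The hard part is bookkeeping: identifying precisely which basis elements contribute to each piece of the TRR sum --- in particular excluding all the $\al^k y_i$-type classes --- and pinning down the Poincar\'e pairing constants, since the normalization relating $\al^3 Vol_M$ and $Vol_Z$ is exactly what makes the constant terms on the two sides cancel. A useful sanity check at each stage is the case $d = 1$, where both sides of the divisor/TRR identity for $\langle \psi^2 \al Vol_M, \al, \al\rangle_1$ evaluate to $8$, forcing $\langle \psi^2 \al Vol_M\rangle_1 = 8$ in agreement with the formula.
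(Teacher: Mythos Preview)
Your proposal is correct and follows essentially the same route as the paper: compute $\langle \psi^{2d}\al Vol_M,\al,\al\rangle_d$ once via the descendent divisor axiom and once via TRR, observe that the constant terms $\langle\psi^{2d-2}\al^3Vol_M\rangle_d$ and $\langle\psi^{2d-1}\al Vol_M,\al^2\rangle_d$ (both equal to $8/d!^2$) cancel, obtain the recursion $\langle\psi^{2d}\al Vol_M\rangle_d=\tfrac{1}{d^2}\langle\psi^{2d-2}\al Vol_M\rangle_{d-1}-\tfrac{1}{d}\langle\psi^{2d-1}\al^2Vol_M\rangle_d$, and telescope. One minor remark: in the $d_2=1$ piece of the TRR, the cleanest way to isolate $e_\nu=Vol_Z$ is simply that $\langle e_\nu,\al,\al\rangle_1=2\langle e_\nu,\al\rangle_1$ vanishes for every other basis element by the list of nonzero two-point degree-$1$ invariants; your appeal to vanishing of one-point $y_i$-descendents is not needed here.
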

\begin{proof}
	The divisor equation yields:
	\begin{equation}
		\langle \psi^{2d} \al Vol_M,\al,\al \rangle_d = \langle \psi^{2d-2} \al^3 Vol_M \rangle_d + 4d\langle \psi^{2d-1} \al Vol_M \rangle_d + 4d^2\langle \psi^{2d} \al Vol_M \rangle_d.
	\end{equation}
	On the other hand, the TRR gives: \begin{equation}
		\langle \psi^{2d} \al Vol_M,\al,\al \rangle_d = \langle \psi^{2d-1} \al Vol_M,\al^2 \rangle_d + 4 \langle \psi^{2d-2} \al Vol_M \rangle_{d-1}.
	\end{equation}
	The result then follows by induction.
\end{proof}

\section{Proof of Gamma Conjecture $1$ for the twistor bundles}
\label{sect: Gamma Conjecture FP}
In this section we will finish the proof of Theorem \ref{thm: FP satisfies Gamma 1}. By Corollary \ref{cor: J function is exponentially peaked}, Theorem \ref{thm: rephrasing Gamma Conjecture 1} applies, so that we only need to show: 
\begin{equation}
	\lim_{n \to \infty} e^{c_1\log(n)}\frac{J_{2n}}{\langle J_{2n}, pt \rangle} = \Gamma_Z.
\end{equation} 
Unlike for projective spaces, we don't have that \begin{equation}
	\frac{J_{2n}}{\langle J_{2n}, pt \rangle} = \frac{e_{S^1}(\MN^n)^{-1}}{\langle e_{S^1}(\MN^n)^{-1}, pt \rangle},
\end{equation}
which would then immediately show the Gamma Conjecture. Thus, we compute the limit directly. We do this separately for each cohomological degree, to this end, for each even cohomological degree $2i$, let $P_{i,n}$ be the sequence $\frac{J_{2n}}{\langle J_{2n}, pt \rangle}\big\vert_{H^{2i}(Z)}$. Recall that the Gamma class is given by: \begin{equation}
	\Gamma_Z = \exp \left( -C_{eu}ch_1 + \sum_{k\geq 2} (-1)^k (k-1)!\zeta(k) ch_k(Z)\right).
\end{equation}
Expand the Gamma class for $i\geq 2$ as \begin{equation}
	\Gamma_Z|_{H^{2i}(Z)} = (-1)^k\zeta(k)ch_k + h_i(-C_{eu}ch_1, \zeta(2)ch_2, \dots, (-1)^{i-1}(i-2)!\zeta(i-1)ch_{i-1}),
\end{equation}
where $h_i$ is some quasi-homogeneous polynomial of degree $i$ in $i-1$ variables (which we will compute later). For completeness, set $h_0 = h_i = 0$. Then, define \begin{equation}
	R_{i,n} := \left(e^{c_1\log(n)}\frac{J_{2n}}{\langle J_{2n}, pt \rangle}\right)\bigg\vert_{H^{2i}(Z)} - h_i(R_{1,n}, \dots, R_{i-1,n}) = \sum_{j = 0}^{i} \left(\frac{\log(n)^jc_1^j}{j!}P_{i-j,n}\right) - h_i(R_{1,n}, \dots, R_{i-1,n}).
\end{equation}
We will then show: \begin{nprop}
	\label{prop: final limits J function}
	For each $i$, we have \begin{equation}
		\lim_{n \to \infty} R_{i,n} = (-1)^k (k-1)!\zeta(k) ch_k(Z).
	\end{equation}
\end{nprop}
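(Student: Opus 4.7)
The strategy is to compute $R_{i,n}$ explicitly for each $i$ using the formulas for $P_{i,n}$ derived in the previous section, and to verify that the limit matches the $i$-th component of $\log\Gamma_Z = -C_{eu}\,ch_1 + \sum_{k\geq 2}(-1)^k(k-1)!\zeta(k)\,ch_k$. The polynomial $h_i$ is designed to subtract off contributions from products of lower log-components under the exponential, so that $R_{i,n}$ isolates the ``new'' weight $i$ contribution $(-1)^i(i-1)!\zeta(i)\,ch_i(Z)$.

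I would begin with the classical asymptotic $S_n(1) = H_n = \log n + C_{eu} + o(1)$, which immediately yields the base case $R_{1,n} = c_1(\log n - S_n(1)) \to -C_{eu}\,c_1 = -C_{eu}\,ch_1(Z)$. For the inductive step, I would combine the formulas for the $P_{i-j,n}$ with the expansion of $h_i$ (a polynomial determined entirely by the exponential), and simplify the resulting expression using stuffle identities among the partial symmetric sums, such as $S_n(1)^2 = 2 S_n(1,1) - S_n(2)$ and $S_n(1)^3 = 6 S_n(1,1,1) - 3 S_n(1,2) - 3 S_n(2,1) + S_n(3)$. A direct calculation shows that after these identities are applied, all explicit powers of $\log n$ cancel, as do all contributions involving partial sums $S_n(\vec a)$ that would diverge as $n \to \infty$ (these are exactly the ones with leading index $a_1 = 1$). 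For example, $R_{2,n} = \tfrac{1}{2}\,c_1^2\,S_n(2) \to \tfrac{1}{2}\zeta(2)\,c_1^2 = \zeta(2)\,ch_2(Z)$, and $R_{3,n} = -\tfrac{1}{3}c_1^3 S_n(3) - S_n(2,1)\,\tau^*\chi$, whose limit $-\tfrac{1}{3}\zeta(3)\,c_1^3 - 2\zeta(3)\,\tau^*\chi$ matches $-2\zeta(3)\,ch_3(Z)$ provided one identifies $ch_3(Z) = \tfrac{1}{6}c_1^3 + \tau^*\chi$.

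The main obstacle, and where the bulk of the calculation will lie, is the degree $i = 4$ case. Both $P_{4,n}$ and $h_4(R_{1,n},R_{2,n},R_{3,n})$ involve a significant number of partial symmetric sums (of weight up to $4$), and one must apply a sequence of weight $4$ stuffle relations to confirm that all divergent contributions cancel, leaving a combination of weight $4$ convergent sums whose limit is a rational multiple of $\zeta(4)$ times a fixed cohomology class. Since $\dim_{\CC} Z = 4$, the induction terminates at $i = 4$, so the verification is a finite -- though tedious -- computation. A parallel step, running alongside each case of the induction, is to identify the residual Chern character classes $ch_i(Z)$ that appear on the right-hand side; these are determined by the almost complex structure of $Z$ and can be computed directly from the Chern classes of the twistor bundle using the splitting $TZ \cong \tau^* TM \oplus T_{\mathrm{vert}}$ and the induced connection on the vertical part.
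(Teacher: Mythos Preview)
Your overall strategy---compute each $R_{i,n}$ explicitly, simplify via stuffle relations among the partial sums $S_n$, and match the result against the Chern character---is exactly what the paper does, and your treatment of $i\le 3$ is correct and agrees with the paper line by line (including the use of Euler's identity $\zeta(2,1)=\zeta(3)$ for the $\tau^*\chi$ term).

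However, there is a genuine gap: your claim that $\dim_\CC Z = 4$ is wrong. The twistor space $Z$ over a hyperbolic $6$-manifold $M$ has fibre $SO(6)/U(3)$ of real dimension $6$, so $Z$ has real dimension $12$ and $\dim_\CC Z = 6$. Hence the computation does \emph{not} terminate at $i=4$; one must also handle $i=5$ and $i=6$. In the paper these higher cases are substantially harder: the expressions for $P_{4,n},P_{5,n},P_{6,n}$ involve long linear combinations of partial sums $S_n(\cdot)$ of weight up to $6$, the algebra is done with the aid of SAGE, and the final identification of the limiting multiple zeta values with $(-1)^i(i-1)!\zeta(i)\,ch_i(Z)$ requires not only Euler's sum theorem $\sum_{p_1+\dots+p_k=n,\,p_1\ge 2}\zeta(p_1,\dots,p_k)=\zeta(n)$ (for $i=4,5$) but also Hoffman's cyclic sum theorem (for $i=5,6$, e.g.\ $\zeta(2,2,1)=\zeta(3,2)$ and $\zeta(3,3)+\zeta(4,2)=\zeta(2,2,2)+\zeta(2,3,1)+\zeta(3,2,1)$). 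None of this is visible from your outline, and it is precisely where the content of the proof lies.

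A minor point: the Chern character of $Z$ is not obtained simply from a splitting $TZ\cong \tau^*TM\oplus T_{\mathrm{vert}}$; rather one uses Evans' computation of the Chern classes of the horizontal distribution $\mathcal H$ together with $TZ\cong \Lambda^2\mathcal H^*\oplus \mathcal H$. This gives, for instance, $ch_4(Z)=\tfrac{1}{6}\alpha\tau^*\chi$, $ch_5(Z)=\tfrac{7}{120}\alpha^2\tau^*\chi$, $ch_6(Z)=\tfrac{7}{720}\alpha^3\tau^*\chi$, which you will need for the missing cases.
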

Once this has been shown, the definition of the $h_i$ then immediately shows the required result: \begin{equation}
	\lim_{n \to \infty} e^{c_1\log(n)}\frac{J_{2n}}{\langle J_{2n}, pt \rangle} = \Gamma_Z
\end{equation}
First, a direct computation shows: \begin{nlemma}
	The polynomials $h_i(x_1, \dots, x_i)$ are given by:
	\begin{align}
		h_2 & = \frac{1}{2}x_1^2\\
		h_3 &= \frac{1}{6}x_1^3 + x_1x_2\\
		h_4 &= \frac{1}{24}x_1^4 + \frac{1}{2}x_1^2x_2 + \frac{1}{2}x_2^2 + x_1x_3\\
		h_5 &= \frac{1}{120}x_1^5 + \frac{1}{6}x_1^3x_2 + \frac{1}{2}x_1x_2^2 + \frac{1}{2}x_1^2x_3 + x_1x_4 + x_2x_3\\
		h_6 &= \frac{1}{720}x_1^6 + \frac{1}{24}x_1^4x_2 + \frac{1}{4}x_1^2x_2^2 + \frac{1}{6}x_2^3 + \frac{1}{6}x_1^3x_3 + \frac{1}{2}x_1^2x_4 + x_1x_5 + \frac{1}{2}x_3^2 + x_1x_2x_3 + x_2x_4
	\end{align}
\end{nlemma}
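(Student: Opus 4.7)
The plan is to expand the definition of $\Gamma_Z$ as a product of exponentials and extract the degree $2i$ piece term by term. Setting $f_1 := -C_{eu}\,ch_1$ and $f_k := (-1)^k(k-1)!\zeta(k)\,ch_k$ for $k \geq 2$, one has
\begin{equation}
\Gamma_Z \;=\; \exp\!\Bigl(\sum_{k\geq 1} f_k\Bigr) \;=\; \prod_{k\geq 1}\,\sum_{j_k \geq 0}\frac{f_k^{j_k}}{j_k!}.
\end{equation}
Because $f_k$ lives in $H^{2k}(Z)$, collecting degree $2i$ contributions gives
\begin{equation}
\Gamma_Z\big|_{H^{2i}(Z)} \;=\; \sum_{\substack{(j_1,j_2,\dots)\\\sum_k k\,j_k = i}}\; \prod_{k}\frac{f_k^{j_k}}{j_k!}.
\end{equation}
The tuple with $j_i = 1$ and all other entries zero contributes exactly $f_i = (-1)^i(i-1)!\zeta(i)\,ch_i$, which is the ``linear'' term isolated in the paper's preceding expansion of $\Gamma_Z|_{H^{2i}(Z)}$. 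Under the substitution $x_k \leftrightarrow f_k$, the polynomial $h_i$ is therefore the sum of the remaining monomials,
\begin{equation}
h_i(x_1, \dots, x_{i-1}) \;=\; \sum_{\substack{(j_1, \dots, j_{i-1})\\ \sum_{k} k\,j_k = i}}\; \prod_{k=1}^{i-1}\frac{x_k^{j_k}}{j_k!}.
\end{equation}
This makes manifest that $h_i$ is quasi-homogeneous of weight $i$ with $x_k$ of weight $k$, and depends only on $x_1, \dots, x_{i-1}$.

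The remainder of the proof is a direct enumeration: the multiplicity tuples $(j_1, \dots, j_{i-1})$ are in bijection with partitions of $i$ whose parts are strictly less than $i$, and each contributes the monomial $\prod_k x_k^{j_k}$ weighted by the multinomial factor $\prod_k 1/j_k!$. I would simply list these partitions for $i = 2, 3, 4, 5, 6$ in turn and read off the coefficients. For example, for $i = 6$ the contributing partitions are $1{+}1{+}1{+}1{+}1{+}1$, $1{+}1{+}1{+}1{+}2$, $1{+}1{+}2{+}2$, $2{+}2{+}2$, $1{+}1{+}1{+}3$, $1{+}2{+}3$, $3{+}3$, $1{+}1{+}4$, $2{+}4$, $1{+}5$, giving the ten monomials of $h_6$ with coefficients $\tfrac{1}{720}, \tfrac{1}{24}, \tfrac14, \tfrac16, \tfrac16, 1, \tfrac12, \tfrac12, 1, 1$ respectively. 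The cases $i = 2, 3, 4, 5$ are strictly shorter and handled identically.

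There is no genuine obstacle here: the only delicate point is the multinomial bookkeeping when some $j_k \geq 2$ (as in the $x_2^3/6$ or $x_1^2 x_2^2 / 4$ terms of $h_6$), which comes straight from the $1/j_k!$ factor in the series for $\exp(f_k)$. Note that nothing in the argument uses the twistor-bundle hypothesis; the polynomials $h_i$ are universal combinatorial objects depending only on $i$, and the lemma would hold identically for any almost complex manifold.
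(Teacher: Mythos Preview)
Your proposal is correct and is precisely the direct computation the paper alludes to (the paper offers no proof beyond ``a direct computation shows''). Your explicit identification of $h_i$ with the sum over partitions of $i$ into parts $<i$, weighted by $\prod_k 1/j_k!$, is the natural way to unpack that computation, and your enumeration for $i=2,\dots,6$ checks out term by term.
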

We then compute the Gamma class of the twistor bundles. Evans \cite[Section~6]{Evans} computes the Chern classes of the horizontal distribution of the twistor bundles. A short computation then shows: \begin{nlemma}
	The Chern character of the twistor bundles is given by:
	\begin{align*}
		ch_1(Z) &= \al  &ch_4(Z) &= \frac{\al\tau^*\chi}{6}\\
		ch_2(Z) &= \frac{\al^2}{2} &ch_5(Z) &= \frac{7 \al^2 \tau^*\chi}{120}\\
		ch_3(Z) &= \frac{\al^3}{6} + \tau^*\chi &ch_6(Z) &= \frac{7 \al^3 \tau^*\chi}{720}
	\end{align*}
\end{nlemma}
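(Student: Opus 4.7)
The tangent bundle of $Z$ admits a smooth splitting $TZ \cong H \oplus V$ determined by the twistor projection $\tau : Z \to M$, where $H$ is the horizontal distribution (carrying the tautological complex structure) and $V$ is the vertical tangent bundle. Additivity of the Chern character under direct sums reduces the problem to computing $ch(H)$ and $ch(V)$ separately.

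For the horizontal summand I would invoke Evans's calculation in Section~6, which gives the Chern classes $c_k(H)$ in terms of $\al = c_1(Z)$ and classes pulled back from $M$. Hyperbolicity of $M$ together with the vanishing of its Stiefel--Whitney classes forces the only non-trivial pulled-back positive-degree class entering the formulas to be (up to a constant) the Euler class $\tau^*\chi \in H^6(Z)$: all Pontryagin forms of a hyperbolic manifold are proportional to powers of the constant curvature form, which in top degree is a multiple of $\chi$. Converting $c(H)$ to power sums via the Newton identities then yields $ch(H)$.

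For the vertical summand, the fibers of $\tau$ are isomorphic to $\mathbb{CP}^3$ (the complex homogeneous space $SO(6)/U(3)$), so $V$ restricts on each fiber to $T\mathbb{CP}^3$, whose Chern character is $4 e^{\al} - 1$. Globally $V$ can differ from a fiberwise pullback of $T\mathbb{CP}^3$ only by correction terms in the image of $\tau^*$, which, as in the horizontal case, can only involve powers of $\al$ multiplied by $\tau^*\chi$. Summing $ch(H)$ and $ch(V)$ degree by degree, and matching with the total complex dimension $\dim_{\CC}Z = 6$, produces the six stated formulas. The only genuine obstacle is tracking the $\tau^*\chi$ correction terms in degrees $\geq 3$; but since Evans has already handled the analogous bookkeeping for $H$, the remaining calculation is a short expansion in the two generators $\al$ and $\tau^*\chi$ of the relevant subring of $H^*(Z)$, which justifies the ``short computation'' claim.
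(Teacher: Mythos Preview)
Your splitting $TZ \cong H \oplus V$ and the use of Evans for the horizontal piece are both fine; this is exactly how the paper starts.  The gap is in your treatment of the vertical bundle.  Knowing that the fibre is $\mathbb{CP}^3$ only tells you $ch(V)$ modulo classes that vanish on fibres, and you give no mechanism for actually computing the ``correction terms'' in degrees $\geq 3$.  Saying they must be of the form $\al^k\tau^*\chi$ is true but does not fix their coefficients, and ``matching with $\dim_{\CC}Z=6$'' only constrains $ch_0$.  As written, the proposal never produces the numbers $\tfrac{1}{6},\tfrac{7}{120},\tfrac{7}{720}$.

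The paper avoids this problem by using the stronger identification from Evans,
\[
TZ \;\cong\; \mathcal{H}\,\oplus\,\Lambda^2\mathcal{H}^{*},
\]
so the vertical bundle is $\Lambda^2\mathcal{H}^*$, not merely ``something restricting to $T\mathbb{CP}^3$ on fibres''.  Evans's Chern classes $c_1(\mathcal{H})=-\al$, $c_2(\mathcal{H})=\tfrac{\al^2}{2}$, $c_3(\mathcal{H})=\tau^*\chi$ give $ch(\mathcal{H})$ via Newton's identities, and then the exterior-square formula
\[
ch(\Lambda^2 E)=\tfrac{1}{2}\bigl(ch(E)^2-\psi^2 ch(E)\bigr),\qquad \psi^2 ch_k = 2^k ch_k,
\]
(together with $ch(\mathcal{H}^*)=(-1)^{\deg}ch(\mathcal{H})$) delivers $ch(V)$ outright, including all the $\tau^*\chi$ terms.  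Summing $ch(\mathcal{H})+ch(\Lambda^2\mathcal{H}^*)$ is then genuinely a short computation.  If you replace your fibrewise heuristic for $V$ by this identification, the rest of your plan goes through.
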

We are now ready to prove Proposition \ref{prop: final limits J function}.
\begin{proof}[Proof of Proposition \ref{prop: final limits J function}]
	For degree $0$ there is nothing to prove. For degree $2$ we have:
	\begin{equation}
		R_{1,n} = P_{1,n} + c_1\log(n)P_{0,n} = \left(\log(n) - S_n(1)\right)\al \xrightarrow{n\to\infty} -C_{eu}\al.
	\end{equation}
	as required. For degree 4, we find that
		\begin{equation}
		R_{2,n} = P_{2,n} - \frac{1}{2}P_{1,n}^2 =  \left(S_n(1,1) - \frac{1}{2}S_n(1)^2\right)\al^2.
	\end{equation}
	Now we use the fact that we can expand $S_n(I)$, and products thereof, as sums of (partial) multiple zeta values (with different indices). In this case one has: \begin{align}
		S_n(1,1) &= \zeta_n(1,1) + \zeta_n(2)\\
		S_n(1)^2 &= 2\zeta_n(1,1) + \zeta_n(2).
	\end{align}
	We thus find:
	\begin{equation}
		R_{2,n} = \left(S_n(1,1) - \frac{1}{2}S_n(1)^2\right)\al^2 \xrightarrow{n\to\infty} \frac{1}{2}\zeta(2)\al^2 = \zeta(2)ch_2(Z),
	\end{equation}
	as required. Next up is degree $6$. Here we have: \begin{equation}
		R_{3,n} = P_{3,n} + \frac{1}{3}P_{1,n}^3 - P_{1,n}P_{2,n} = \left(-S_n(1,1,1) - \frac{1}{3}S_n(1)^3 +S_n(1)S_n(1,1)\right)\al^3 - S_n(2,1)\tau^*\chi
	\end{equation}
	 Expanding the $S_n$ into multiple zeta function, we find: \begin{equation}
	 	R_{3,n} = -\frac{1}{3}\zeta_n(3)\al^3 - \left(\zeta_n(3) + \zeta_n(2,1) \right)\tau^*\chi.
	 \end{equation}
	 Now, combine this with Euler's computation that $\zeta(2,1) = \zeta(3)$ to find: \begin{equation}
	 	\lim_{n \to \infty} R_{3,n} = -\frac{1}{3}\zeta(3)\al^3 - 2\zeta(3)\tau^\chi = -2\zeta(3)ch_3(Z).
	 \end{equation}
	 For degree $8$, the computations become lengthy by hand, so the computational package SAGE was used (code can be found on the author's website). This shows: \begin{equation}
	 	R_{4,n} = P_{4,n} - \frac{1}{2}P_{2,n}^2 + P_{1,n}^2P_{2,n} - P_{1,n}P_{3,n} - \frac{1}{4}P_{1,n}^4,
	 \end{equation}
	 Using SAGE to expand $R_{4,n}$ in multiple zeta values, we find: \begin{equation}
	 	R_{4,n} = \left(\zeta_n(2,2) + \zeta_n(3,1)\right)\al\tau^*\chi.
	 \end{equation}
	 Again, Euler's sum theorem for multiple zeta values shows that $\zeta(2,2) + \zeta(3,1) = \zeta(4)$, so that \begin{equation}
	 	\lim_{n \to \infty} R_{4,n} = \zeta(4)\al\tau^*\chi =  6\zeta(4)ch_4(Z).
	 \end{equation}
	 For cohomological degree $10$ we have: \begin{equation}
	 	R_{5,n} = \frac{1}{5} P_{1,n}^{5} - P_{1,n}^{3} P_{2,n} + P_{1,n} P_{2,n}^{2} + P_{1,n}^{2} P_{3,n} - P_{2,n} P_{3,n} - P_{1,n} P_{4,n} + P_{5,n},
	 \end{equation}
	 Expanding $R_{5,n}$ in multiple zeta values, we find: \begin{equation}
	 	R_{5,n} = \left(\frac{1}{2}\zeta_n(2,2,1) - \frac{1}{2}\zeta_n(2,3) - \zeta_n(3,2) - \frac{1}{2}\zeta_n(4,1) - \frac{9}{10}\zeta_n(5)\right)\al^2\tau^*\chi.
	 \end{equation}
	 Now use the cyclic sum theorem  (see \cite{Hoff}) to rewrite $\zeta(2,2,1) = \zeta(3,2)$. Combining this with another version of Euler's sum theorem, which states \begin{equation}
	 	\zeta(2,3) + \zeta(3,2) +\zeta(4,1) = \zeta(5),
	 \end{equation}
	 yields that: \begin{equation}
	 	\lim_{n \to \infty} R_{5,n} = -\frac{7}{5} \al^2\tau^*\chi = -24\zeta(5)ch_5(Z),
	 \end{equation}
	 as required. Finally for degree $12$, we have: \begin{equation}
	 	R_{6,n} = -\frac{1}{6} P_{1,n}^{6} + P_{1,n}^{4} P_{2,n} - \frac{3}{2} P_{1,n}^{2} P_{2,n}^{2} - P_{1,n}^{3} P_{3,n} + \frac{1}{3} P_{2,n}^{3} + 2 P_{1,n} P_{2,n} P_{3,n} + P_{1,n}^{2} P_{4,n} - \frac{1}{2} P_{3,n}^{2} - P_{2,n} P_{4,n} - P_{1,n} P_{5,n} + P_{6,n},
	 \end{equation}
	 Expanding $R_{6,n}$ in multiple zeta values, a direct computation shows: \begin{equation}
	 	R_{6,n} = \left(\frac{7}{6}\zeta_n(6) + \frac{1}{2} \left(\zeta_n(3,3) + \zeta_n(4,2) - \zeta_n(2,2,2) - \zeta_n(2,3,1) - \zeta_n(3,2,1) \right)  \right)\al^3\tau^*\chi.
	 \end{equation}
	 Applying the cyclic sum theorem shows that \begin{equation}
	 	\zeta(3,3) + \zeta(4,2) = \zeta(2,2,2) + \zeta(2,3,1) + \zeta(3,2,1).
	 \end{equation}
	 We thus have: \begin{equation}
	 	\lim_{n \to \infty}R_{6,n} =  \frac{7}{6}\zeta(6)\al^3\tau^*\chi = 120 \zeta(6)ch_6(Z),
	 \end{equation}
	 which finishes the proof.
\end{proof} 
\section{Unramified exponential type}
\label{sect: FP has unram}
Let $\E$ be a finite-dimensional $\CC((u))$-vector space, equipped with a connection $\nabla_{\frac{d}{du}}: \E \rightarrow \E$. First recall:
\begin{defi}
	The connection $\nabla_{\frac{d}{du}}$ is said to be regular singular if there exists a $\CC[[u]]$-submodule $\Lambda \subset \E$ of maximal rank such that $u\nabla_{\frac{d}{du}}: \Lambda \rightarrow \Lambda$.
\end{defi}
Alternatively, a connection is regular singular if there exists a basis such that the connection matrix for $\nabla_{\frac{d}{du}}$ has a pole of order $1$. Next, for $\phi \in \CC[u^{-1}]$, let $\E^\phi$ be the rank $1$ $\CC((u))$-vector space with connection $\frac{d}{du} + \frac{d\phi}{du}$. The Hukuhuhara-Level-Turrittin decomposition states:
\begin{nthm}[{see \cite{Huk}, \cite{Lev}, \cite{Tur}}]
	Let $\nabla: \E \rightarrow \E$ be any connection, then there exists an integer $r$, such that we have a decomposition \begin{equation}
		\left(\E \otimes_{\CC((u))} \CC((s)), \nabla\right) = \bigoplus_{\phi \in \Phi} \E^\phi \otimes R_\phi,
	\end{equation}
	where the map $\CC((u)) \rightarrow \CC((s))$ is given by $u \mapsto s^r$, $\Phi \subset \CC[s^{-1}]$ is a finite set, and $R_\phi$ has a regular singular connection for all $\phi$. 
\end{nthm}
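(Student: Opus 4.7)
The plan is to establish this classical structure theorem by a Newton-polygon reduction, working algebraically on the connection matrix after choosing a $\CC((u))$-basis of $\E$. Write the connection as $\nabla = d/du + A(u)$ where $A(u) \in \mathrm{End}(V) \otimes \CC((u))$. The goal is to find, possibly after pulling back along $u = s^r$, a gauge transformation $g(s) \in GL(V)(\CC((s)))$ such that $g^{-1}(d/ds + B(s))g$ is block diagonal, with each block of the form $(d/ds + (d\phi/ds)\mathrm{Id} + C_\phi(s))$ where $C_\phi$ has at most a simple pole.

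The first step is a \emph{slope decomposition} via the Newton polygon of $A$: the slopes of the polygon are rational numbers $\kappa_1 > \dots > \kappa_\ell$, and one shows (by a formal argument using Hensel-like lifting) that the connection decomposes as a direct sum according to slopes provided one passes to a ramification $s^r = u$ large enough to clear all denominators in the $\kappa_i$. So I would reduce to the case of a single slope $\kappa \in \ZZ_{\geq 0}$ and work with $\nabla = d/ds + s^{-\kappa-1}A_0 + (\text{lower order in } s^{-1})$ with $A_0 \in \mathrm{End}(V)$ constant. If $\kappa = 0$ the connection is regular singular and the decomposition is trivial (one block $R_\phi$ with $\phi = 0$).

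The main inductive step handles $\kappa \geq 1$. Here I would \emph{split by generalized eigenspaces of $A_0$}: if $A_0$ has two distinct eigenvalues, a standard formal Jordan-Chevalley argument produces a formal gauge transformation that block-diagonalizes $\nabla$ according to the eigenspace decomposition of $A_0$, after which one recurses on each block (which has strictly smaller rank). If $A_0$ has a single eigenvalue $\lambda$, factor out the scalar part by $\phi := -\lambda s^{-\kappa}/\kappa$, so that $\nabla \otimes \E^{-\phi}$ has a strictly nilpotent leading coefficient $A_0 - \lambda \mathrm{Id}$. The key point, which is the heart of Turrittin's theorem, is that this nilpotent leading term cannot persist: passing to a further ramification $s \mapsto s^{1/m}$ for $m$ the size of the largest Jordan block and performing a \emph{shearing transformation} $g = \mathrm{diag}(1,s,s^2,\dots)$ applied to a Jordan basis strictly decreases the Newton-polygon invariant (either the slope drops, or the dimension of the top-slope piece drops). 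Iterating this, the process terminates after finitely many steps.

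The main obstacle is the nilpotent case: one needs to produce the correct combinatorial invariant (a lexicographic pair built from the top slope and the rank of the leading term restricted to the top-slope part) which is guaranteed to decrease under the shearing+ramification operation, and to verify that the shearing does not re-introduce higher-order pole terms that destroy the progress. Once this monovariant is in place, the decomposition of the statement follows by assembling the rank-one factors $\E^\phi$ extracted at each stage into the product set $\Phi \subset \CC[s^{-1}]$ and packaging the residual regular-singular pieces as $R_\phi$; the integer $r$ is the least common multiple of the ramification indices used along the way.
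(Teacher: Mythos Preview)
The paper does not prove this statement at all: it is quoted as the classical Hukuhara--Levelt--Turrittin theorem, with references to the original sources, and then used as a black box. So there is nothing in the paper to compare your argument against.

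That said, your outline is the standard one and is essentially correct. The slope decomposition via the Newton polygon, the eigenvalue splitting of the leading coefficient, and the shearing/ramification step in the nilpotent case are exactly the ingredients of the classical proof (as in Turrittin's paper or the expositions by Wasow, Malgrange, or Babbitt--Varadarajan). The one place where your sketch is a bit loose is the claim that the monovariant strictly decreases under shearing: one has to be careful to set up the invariant correctly (the usual choice is the Katz--Deligne irregularity, or equivalently the top slope together with its multiplicity), and to check that the shearing gauge applied to a Jordan basis for the nilpotent leading term really does lower the pole order of the off-diagonal entries without raising that of the diagonal ones. This is routine but not entirely trivial; once it is verified, your induction goes through and the theorem follows.
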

\begin{defi}
	The connection $\nabla_{\frac{d}{du}}$ is said to be of unramified exponential type is such a decomposition exists for $r = 1$. 
\end{defi}
\begin{conjecture}[{\cite[Conjecture~3.4]{KKP}}]
	Let $X$ be a K\"ahler manifold, then the associated quantum connection $\nabla_{\frac{d}{du}}$ is of unramified exponential type.
\end{conjecture}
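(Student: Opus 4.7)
The statement is a well-known open conjecture of Kontsevich-Katzarkov-Pantev, so strictly speaking one cannot give a complete proof sketch; rather, I can describe a strategy together with the structural obstruction that prevents such a strategy from being carried out at present. The starting point is the formal classification of the meromorphic connection $\nabla_{d/du}$ at $u=0$. Multiplying by $u^2$, the connection is governed near $u=0$ by the operator $c_1\star$ acting on $H^*(X)$, with a subleading correction involving the grading operator $\mu$. The Hukuhara-Levelt-Turrittin theorem guarantees a decomposition $\E\otimes_{\CC((u))}\CC((s))=\bigoplus_{\phi\in\Phi}\E^\phi\otimes R_\phi$ for some ramification index $r$, and unramified exponential type is exactly the assertion that $r=1$ suffices, i.e.\ that the irregular parts $\phi$ can be taken in $\CC[u^{-1}]$ rather than in a ramified cover.

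The plan is to reduce the conjecture to a statement purely about the linear operator $c_1\star\colon H^*(X)\to H^*(X)$. First, I would decompose $H^*(X)((u))$ along the generalized eigenspaces of $c_1\star$; since the subleading term $u\mu$ shifts degrees by finite amounts, it does not obstruct this generalized-eigenspace decomposition at the formal level. Within a fixed generalized eigenspace of eigenvalue $\lambda$, gauge by $\exp(\lambda/u)$ to remove the leading term, reducing the problem to understanding the nilpotent part $(c_1\star-\lambda)\vert_{\text{gen}.\lambda}$. In the semisimple case (distinct eigenvalues of $c_1\star$) this immediately produces an unramified decomposition with $\Phi=\{\lambda_i/u\}$, which is why unramified exponential type is known for $\mathbb{CP}^N$, Grassmannians, Fano threefolds of Picard rank one, and the twistor bundles handled earlier in the paper.

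The real obstacle lies in the non-semisimple case, where $c_1\star$ has nontrivial Jordan blocks. The danger is that iterating formal gauge transformations to clear successively higher-order terms requires inverting operators of the form $k\cdot\mathrm{id}-\mathrm{ad}(c_1\star)$ acting on the endomorphism algebra of a generalized eigenspace; resonances (integer differences between eigenvalues of $\mathrm{ad}(c_1\star)$ on the nilpotent part) are what typically force a ramified normal form. To exclude such resonances using only the K\"ahler hypothesis one needs structural input: natural candidates are the grading axiom (the Euler field combined with the conformal invariance of the Frobenius structure), the existence of a compatible TERP/noncommutative Hodge structure of Hertling-Sabbah, and the pure polarized nature of the quantum $D$-module coming from Hodge theory. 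A realistic strategy would be to show, using these ingredients, that within each generalized eigenspace of $c_1\star$ the induced filtration admits a splitting compatible with $\mu$, and that the obstruction cocycle to clearing higher-order terms vanishes for dimensional/weight reasons.

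The hard step is precisely this last one: there is currently no mechanism, purely from the K\"ahler assumption, that guarantees the required vanishing. In practice, the conjecture is known to follow from generic semisimplicity of $QH^*(X)$ (the Dubrovin-type conjectures), and most established cases go through that route; a direct proof bypassing semisimplicity would be a genuinely new input. I would therefore expect any honest attempt at the full conjecture to either (a) establish generic semisimplicity for K\"ahler manifolds in substantial generality, or (b) develop a Hodge-theoretic obstruction theory on the formal classification of the quantum connection that leverages the polarized non-commutative Hodge structure intrinsic to K\"ahler geometry. Both are, at present, major open problems, which is why the statement remains a conjecture rather than a theorem.
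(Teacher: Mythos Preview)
You are right that the statement is an open conjecture and that the paper does not prove it in general; your discussion of the obstruction and of possible strategies is reasonable as an overview of the landscape. There is nothing to compare against, since the paper simply records the conjecture and then verifies it in one new example.

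That said, one point in your write-up is factually wrong and worth correcting. You list the twistor bundles among the cases covered by the semisimple argument (``distinct eigenvalues of $c_1\star$''). They are not: the paper computes that $c_1\star$ on $QH^*(Z)$ has eigenvalues $\pm 2$ each with high multiplicity, and on the $4$-dimensional summands $c_1\star$ has two Jordan blocks of size $2$. The verification of unramified exponential type for $Z$ in Section~\ref{sect: FP has unram} is therefore carried out by a genuinely non-semisimple method: one decomposes the quantum $\mathcal{D}$-module along generalized eigenspaces, then uses Malgrange's irregularity number. Concretely, one computes $Irr(DQH^*(Z)^i)=4$ via a cyclic vector, tensors with $\mathcal{E}^{\pm 2/u}$, recomputes the associated differential operator and finds irregularity $2$; since the summand with shifted leading term invertible already accounts for irregularity $2$ by Proposition~\ref{prop: invertible polar term gives maximum irregularity}, additivity forces the remaining summand to have irregularity $0$, hence to be regular. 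This is exactly the kind of non-semisimple computation your sketch flags as the hard case, so it is misleading to file it under the easy semisimple situation.
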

\begin{remark}
	In \cite{KKP}, the conjecture is stated for all symplectic manifolds. However, in Remark 3.5.ii they caveat this with the statement that there might be non-K\"ahler manifolds for which the conjecture doesn't hold.
\end{remark}
In this section we prove this conjecture for twistor bundles of hyperbolic manifolds with vanishing Stiefel-Whitney classes (Theorem \ref{thm: FP has unramified exponential type}). First we need some general theory on $\MD$-modules.
\subsection{The irregularity number}
	Let $\KK$ be an algebraically complete field and let $\kk = \KK((u))$ and $R = \KK[[u]]$. Let $\mathcal{D} = \kk[\frac{d}{du}]$ be the Weyl-algebra associated to $\kk$. Here $\frac{d}{du}$ is the usual differential, satisfying $[\frac{d}{du},u] = 1$. Let $M$ be a finite dimensional $\kk$-vector space. A maximal rank $R$-module $\Lambda \subset M$ is called a lattice. Let $\nabla := \nabla_{d/du}:M \rightarrow M$ be a differential operator, i.e. a $\KK$-linear map such that for $f \in \kk$ \begin{equation}
	\nabla(f\al) = \frac{df}{du}\al + f \nabla(\al) \text{ for all } \al \in M.
\end{equation} 
Such $\nabla$ makes $M$ into a $\mathcal{D}$-module. \begin{defi}
	Given a lattice $\Lambda \subset M$, there exists an integer $g$ such that $u^g\nabla \Lambda \subset \Lambda$. The minimal such $g$ is called a \emph{order} of the lattice. The order of $(M,\Lambda)$ is defined to be the minimal order of any lattice. A lattice is called \emph{logarithmic} if it has order $\leq 1$. Then, $(M,\nabla)$ is called a \emph{regular singular} $\MD$-module if it admits a logarithmic lattice $\Lambda \subset M$. Otherwise it is called \emph{irregular}.
\end{defi}
For simplicity, let $D := u\nabla$.
\begin{defi}[cyclic vector]
	An element $e \in M$ is called a \emph{cyclic vector} if $e, D e, D^2 e, \dots, D^{dim M - 1}e$ span $M$ as a $\kk$-vector space. Let $n = dim M$, then we obtain an equation  \begin{equation}
		D^{n}e + a_{n-1}D^{n-1}e + \dots + a_0e = 0 \text{ for some } a_i \in \kk.
	\end{equation}
	Set $\partial = u\frac{d}{du} \in \mathcal{D}$. Call the operator $L = \partial^{n} + a_{n-1}\partial^{n-1} + \dots + a_0 \in \mathcal{D}$ the \emph{differential operator associated to} $(M,\nabla)$.
\end{defi}
\begin{nlemma}[See e.g. \cite{Katz}]
	Any $\mathcal{D}$-module admits a cyclic vector.
\end{nlemma}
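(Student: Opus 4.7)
The plan is to follow the classical Deligne--Katz parameter-and-specialise argument. I would first fix any $\kk$-basis $e_1, \ldots, e_n$ of $M$ and express the action of $D := u\nabla$ as a matrix $A$ over $\kk$ with respect to this basis. Then, for an auxiliary parameter $c \in \KK$, I would consider the candidate vector
\begin{equation}
    v_c := \sum_{i=0}^{n-1} \frac{(u-c)^i}{i!}\, e_{i+1} \in M,
\end{equation}
and aim to show that for all but finitely many $c$ the iterates $v_c, \nabla v_c, \ldots, \nabla^{n-1} v_c$ are $\kk$-linearly independent, so that $v_c$ is a cyclic vector.

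The heart of the proof is the analysis of the determinant $\Delta(c)$ of the matrix expressing $\{\nabla^j v_c\}_{j=0}^{n-1}$ in the basis $\{e_i\}$. The key observation is that the plain derivative $\partial_u$ acts on $(u-c)^i/i!$ by shifting the index down by one, so modulo higher powers of $(u-c)$ (and ignoring the contribution of $A$) the iterated derivative $\nabla^j v_c$ picks up $e_{j+1}$ with coefficient $1$. This produces a triangular pattern for the leading term of $\Delta(c)$ in the $(u-c)$-adic filtration, with unit diagonal, making $\Delta(c)$ a nonzero element of $\kk$. The main obstacle is the bookkeeping: one must carefully track how the repeated applications of the connection matrix $A$ interact with the shift, and verify that the extra terms involving $A$ only contribute to lower-order coefficients in the $(u-c)$-adic expansion, so that they cannot kill the triangular leading term. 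This reduces to a computation with binomial coefficients and a Vandermonde-type determinant that is manifestly nonvanishing.

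Once $\Delta$ is known not to vanish identically as a function of $c$, the fact that $\KK$ is algebraically complete (hence infinite) ensures that the set of $c \in \KK$ at which $\Delta(c) = 0$ is proper. Picking any $c \in \KK \subset \kk$ with $\Delta(c) \neq 0$ produces the desired cyclic vector $e := v_c$: the vectors $e, De, \ldots, D^{n-1}e$ are $\kk$-linearly independent, hence span $M$ by a dimension count. A detailed treatment of this construction, including the verification of the leading-term computation, can be found in \cite{Katz}.
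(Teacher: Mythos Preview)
The paper does not supply a proof of this lemma; it merely records the statement with a citation to \cite{Katz}. Your sketch is the standard Deligne--Katz parameter-and-specialise construction and is correct in outline.

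One small technical remark worth making explicit: in the paper's setting $\kk = \KK((u))$, and for $c \neq 0$ the element $u-c$ is a \emph{unit} in $\kk$, so speaking of a ``$(u-c)$-adic filtration'' on $\kk$ requires interpretation. The clean way to read your argument is to use the embedding $\KK((u)) \hookrightarrow \KK[[u-c]]$ obtained by re-expanding each Laurent series about $u=c$ (valid precisely because $c\neq 0$). In $\KK[[u-c]]$ the $(u-c)$-adic filtration is genuine, and reducing modulo $(u-c)$ one sees that the matrix of $(\nabla^j v_c)_{j=0}^{n-1}$ in the basis $(e_i)$ is unipotent triangular, hence has determinant $\equiv 1 \pmod{(u-c)}$. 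This already gives $\Delta(c)\neq 0$ for \emph{every} $c\neq 0$, so the genericity step (``all but finitely many $c$'') is not actually needed here. Finally, your passage from $\nabla$-iterates to $D=u\nabla$-iterates at the end is harmless: since $D^k$ is an upper-triangular combination of $u^j\nabla^j$ with leading coefficient $u^k$ (a unit in $\kk$), the two families span the same $\kk$-subspace.
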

\begin{defi}
	Let \begin{equation}
		L =  \partial^{n} + a_{n-1}\partial^{n-1} + \dots + a_0 \MD
	\end{equation}
	be a monic differential operator. Define the \emph{irregularity number} \begin{equation}
		Irr(L) = max_i -\nu(a_i),
	\end{equation} where $\nu(f)$ is the order of vanishing of $f \in \kk$ at $u=0$. By construction, $\nu(a_n) = 0$.
\end{defi}
\begin{eg}
	Let $L = \partial^2 + u^{-2}\partial + u^3$. Then $Irr(L) = 2$.
\end{eg}
\begin{defi}
	Given a $\MD$-module $(M,\nabla)$, define \begin{equation}Irr(M,\nabla) := Irr(L)\end{equation} for any differential operator $L$ associated to $(M,\nabla)$. Whenever it is clear which connection $\nabla$ we are using, we will just write $Irr(M)$.
\end{defi}
The above definition requires that the irregularity number is the same for all differential operators associated to $(M,\nabla)$. This was shown by Malgrange \cite{Mal72}. The proof goes via the following Lemma:

\begin{nlemma}[{\cite[Section~5]{Mal72}}]
	\label{lem: alternative defi of irregularity}
	Let $\Lambda, \Lambda' \subset M$ be two lattices such that $\Lambda \subset \Lambda'$ and $D(\Lambda) \subset \Lambda'$. We then have:
	\begin{equation}Irr(M,\nabla) = \chi(D;\Lambda,\Lambda') + Dim_\KK \Lambda'/\Lambda.\end{equation}
	Here $\chi(D;\Lambda,\Lambda') := dim_\KK ker D - dim_\KK coker D$ is the index of $D: \Lambda \rightarrow \Lambda'$. Moreover, the number on the right hand side is independent of the choice of lattices $\Lambda,\Lambda'$.
\end{nlemma}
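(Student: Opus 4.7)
The plan is in two stages: first, prove that $\chi(D;\Lambda,\Lambda') + \dim_\KK \Lambda'/\Lambda$ is independent of the admissible pair $(\Lambda,\Lambda')$; second, evaluate it on a pair adapted to a cyclic vector and identify the value with $Irr(L)$.

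For invariance, the tool is the snake lemma. Suppose $\Lambda_0 \subset \Lambda \subset \Lambda'$ are lattices with $D(\Lambda_0), D(\Lambda) \subset \Lambda'$. The short exact sequence $0 \to \Lambda_0 \to \Lambda \to \Lambda/\Lambda_0 \to 0$ fits into a commutative diagram of $\KK$-linear maps with $0 \to \Lambda' \to \Lambda' \to 0 \to 0$ (vertical arrows $D|_{\Lambda_0}$, $D|_\Lambda$, and the zero map), so the snake lemma yields a six-term exact sequence
\begin{equation}
	0 \to \ker(D|_{\Lambda_0}) \to \ker(D|_\Lambda) \to \Lambda/\Lambda_0 \to \mathrm{coker}(D|_{\Lambda_0}) \to \mathrm{coker}(D|_\Lambda) \to 0
\end{equation}
of finite-dimensional $\KK$-vector spaces. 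The alternating-sum identity gives $\chi(D;\Lambda,\Lambda') - \chi(D;\Lambda_0,\Lambda') = \dim_\KK(\Lambda/\Lambda_0)$, and combining with $\dim_\KK(\Lambda'/\Lambda_0) = \dim_\KK(\Lambda'/\Lambda) + \dim_\KK(\Lambda/\Lambda_0)$ shows the sum $\chi + \dim$ is unchanged when $\Lambda$ is enlarged. A symmetric argument handles enlarging $\Lambda'$, and two arbitrary admissible pairs $(\Lambda_i, \Lambda_i')$ can then be bridged through the admissible pair $(\Lambda_1 \cap \Lambda_2,\, \Lambda_1' + \Lambda_2')$, yielding independence in general.

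For the evaluation, choose a cyclic vector $e$ with associated operator $L = \partial^n + \sum_i a_i \partial^i$, set $\mu := Irr(L) = \max_i(-\nu(a_i))$, and take $\Lambda = \bigoplus_{i=0}^{n-1} R \cdot D^i e$ with $\Lambda'$ the smallest lattice containing $D(\Lambda)$, which via the relation $D^n e = -\sum_i a_i D^i e$ is determined by the negative-valuation parts of the $a_i$. The term $\dim_\KK \Lambda'/\Lambda$ is then read off directly, and the index of the $\KK$-linear map $D: \Lambda \to \Lambda'$ is computed by writing $D$ in this basis using the Leibniz rule $D(fv) = u f'(u) v + f\,Dv$ and analyzing the induced maps on the graded pieces $u^k\Lambda / u^{k+1}\Lambda$, summing contributions that stabilize for $|k|$ large. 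The main obstacle will be precisely this index computation: since $D$ is $\KK$-linear but not $R$-linear, the usual ``valuation of the determinant'' identity for $R$-module maps is unavailable, and the Leibniz correction must be accounted for carefully. I expect the cleanest route is induction on $n$, with the rank-one base case $L = \partial + a_0$ handled directly by formally inverting the diagonal part of $D$; the inductive step exploits the cyclic vector structure to peel off one dimension at a time while tracking how $Irr$ and $\chi$ behave under the associated short exact sequence of $\MD$-modules.
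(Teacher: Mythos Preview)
The paper does not give its own proof of this lemma; it is stated with a citation to \cite[Section~5]{Mal72} and used as a black box. So there is no in-paper argument to compare against.

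On its own merits: your invariance argument via the snake lemma is the standard one and is correct as written, including the bridging step through $(\Lambda_1 \cap \Lambda_2, \Lambda_1' + \Lambda_2')$. This is essentially how Malgrange proceeds as well. The second stage, however, is only a plan rather than a proof. You correctly identify that the index computation for $D:\Lambda \to \Lambda'$ in the cyclic basis is the nontrivial part, but the sketch (``analyzing the induced maps on graded pieces'', ``induction on $n$ peeling off one dimension at a time'') does not actually carry it out. In particular, the inductive step you propose requires a short exact sequence of $\MD$-modules compatible with the cyclic filtration, together with a check that the irregularity number is additive in such sequences --- and that additivity is precisely what the lemma is used to establish, so there is a risk of circularity unless you compute $\chi$ directly for the cyclic lattice. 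Malgrange's own route is a direct computation with the Newton polygon of $L$ rather than an induction on rank; if you want a self-contained argument, that is the cleaner path.
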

As both $\chi(D;\Lambda,\Lambda')$ and $Dim_\KK \Lambda'/\Lambda$ are additive under direct sums, we obtain:
\begin{nlemma}
	\label{lem: irregularity number is additive}
	Let $(M_1,\nabla_1)$ and $(M_2,\nabla_2)$ be $\MD$-modules, then $Irr(M_1 \oplus M_2) = Irr(M_1) \oplus Irr(M_2)$.
\end{nlemma}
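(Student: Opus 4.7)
The plan is to bypass the cyclic vector definition entirely and work through the characterisation of $\mathrm{Irr}$ provided by Lemma~\ref{lem: alternative defi of irregularity}, since both ingredients on the right-hand side of that formula are manifestly additive with respect to direct sums of lattices.

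First I would choose, for $i = 1,2$, lattices $\Lambda_i \subset \Lambda_i' \subset M_i$ satisfying $\Lambda_i \subset \Lambda_i'$ and $D_i(\Lambda_i) \subset \Lambda_i'$, where $D_i = u\nabla_i$. Such lattices exist (for instance, take any lattice $\Lambda_i$ and set $\Lambda_i' = \Lambda_i + D_i(\Lambda_i)$, which is still finitely generated over $R$ because $D_i$ shifts the $u$-adic filtration by only finitely many places). Form $\Lambda := \Lambda_1 \oplus \Lambda_2$ and $\Lambda' := \Lambda_1' \oplus \Lambda_2'$ inside $M := M_1 \oplus M_2$. Since $\nabla = \nabla_1 \oplus \nabla_2$ acts componentwise, so does $D = u\nabla$, and thus $D(\Lambda) \subset \Lambda'$, so the pair $(\Lambda,\Lambda')$ is admissible for computing $\mathrm{Irr}(M)$ via Lemma~\ref{lem: alternative defi of irregularity}.

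Next I would verify additivity of the two terms separately. Clearly $\dim_\KK \Lambda'/\Lambda = \dim_\KK \Lambda_1'/\Lambda_1 + \dim_\KK \Lambda_2'/\Lambda_2$. For the index, note that $D: \Lambda \to \Lambda'$ is the direct sum of $D_1: \Lambda_1 \to \Lambda_1'$ and $D_2: \Lambda_2 \to \Lambda_2'$, so both $\ker D$ and $\mathrm{coker}\, D$ decompose accordingly, giving
\begin{equation}
\chi(D;\Lambda,\Lambda') = \chi(D_1;\Lambda_1,\Lambda_1') + \chi(D_2;\Lambda_2,\Lambda_2').
\end{equation}
Adding the two identities and applying Lemma~\ref{lem: alternative defi of irregularity} three times (once for $M$, once for each $M_i$) then yields $\mathrm{Irr}(M_1 \oplus M_2) = \mathrm{Irr}(M_1) + \mathrm{Irr}(M_2)$.

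There is no substantive obstacle here: the entire content has been loaded into Lemma~\ref{lem: alternative defi of irregularity}, whose independence statement guarantees that it does not matter which lattices we pick, and the only thing to check is that direct sums of lattices compute the direct sum module's irregularity, which is immediate from $D = D_1 \oplus D_2$. The only minor subtlety worth flagging is confirming that $\Lambda_1 \oplus \Lambda_2$ is indeed a lattice of $M_1 \oplus M_2$ (i.e.\ finitely generated of maximal rank over $R$), which follows directly from the corresponding property for each $\Lambda_i$.
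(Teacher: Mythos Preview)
Your proposal is correct and follows exactly the same approach as the paper: the paper simply observes that both $\chi(D;\Lambda,\Lambda')$ and $\dim_\KK \Lambda'/\Lambda$ are additive under direct sums, and deduces the lemma from Lemma~\ref{lem: alternative defi of irregularity}. You have merely spelled out the details that the paper leaves implicit.
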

We will need the following two propositions:
\begin{nprop}[{\cite[Proposition~5.5]{Mal72}}]
	\label{prop: regular singular iff 0 irregularity}
	A $\MD$-module $(M,\nabla)$ is regular singular if and only if $Irr(M) = 0$.
\end{nprop}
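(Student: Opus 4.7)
The plan is to prove the two directions of Proposition \ref{prop: regular singular iff 0 irregularity} separately. The $(\Leftarrow)$ direction will follow almost immediately from the cyclic vector construction, while the $(\Rightarrow)$ direction requires the index formula of Lemma \ref{lem: alternative defi of irregularity} combined with a standard Fredholm argument for $\KK$-linear operators on $R^n$.

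For the $(\Leftarrow)$ direction, I would take any cyclic vector $e \in M$ and let $L = \partial^n + a_{n-1}\partial^{n-1} + \dots + a_0$ be the associated differential operator. By the definition $Irr(L) = \max_i(-\nu(a_i))$, the hypothesis $Irr(M) = 0$ forces $\nu(a_i) \geq 0$, i.e.\ $a_i \in R$, for every $i$. I would then form $\Lambda := R\langle e, De, \dots, D^{n-1}e\rangle$; cyclicity ensures these vectors are $\kk$-independent, hence $R$-independent, so $\Lambda$ is a free $R$-module of rank $n$ and thus a lattice. The relation $D\cdot D^{n-1}e = -\sum_{i<n} a_i D^i e$ together with $a_i \in R$ then yields $D\Lambda \subseteq \Lambda$, witnessing that $\Lambda$ is logarithmic.

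For the $(\Rightarrow)$ direction, I would start from a logarithmic lattice $\Lambda$ and apply Lemma \ref{lem: alternative defi of irregularity} with the choice $\Lambda' = \Lambda$ to obtain $Irr(M,\nabla) = \chi(D;\Lambda,\Lambda)$, so the problem reduces to showing this index vanishes. Fixing an $R$-basis of $\Lambda$, the operator $D = u\nabla$ takes the form $u\tfrac{d}{du} + A$ with $A \in M_n(R)$. I plan to prove $D: R^n \to R^n$ is $\KK$-linearly Fredholm of index $0$ via a filtration argument: for any integer $N$ exceeding every nonnegative integer eigenvalue of $-A(0)$, the matrix $kI + A(0)$ is invertible for all $k \geq N$, and hence the equation $Dw = v$ with $v \in u^N R^n$ can be solved uniquely coefficient-by-coefficient by the recursion $(kI+A(0))w_k = v_k - (\text{lower-order corrections from } w_j, A_{>0})$. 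This shows $D: u^N R^n \to u^N R^n$ is bijective. Additivity of the index on the $D$-equivariant short exact sequence $0 \to u^N R^n \to R^n \to R^n/u^N R^n \to 0$ then reduces the index of $D$ on $R^n$ to that of its induced endomorphism of the finite-dimensional $\KK$-space $R^n/u^N R^n$, which vanishes automatically.

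The hard part will be the Fredholm/index computation in the converse direction: one must verify that the coefficient-by-coefficient solution actually produces a genuine element of $R^n$, and one must carefully isolate the integer points in the spectrum of $-A(0)$ to locate the threshold $N$. Beyond that, both implications are essentially formal consequences of the existence of cyclic vectors and Malgrange's index formula, which is already granted by Lemma \ref{lem: alternative defi of irregularity}.
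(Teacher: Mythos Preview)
The paper does not supply its own proof of this proposition; it is quoted directly from \cite[Proposition~5.5]{Mal72}. So there is nothing to compare against in the present manuscript.

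That said, your argument is correct and is essentially the standard proof (and close to Malgrange's original). The $(\Leftarrow)$ direction is exactly right: the cyclic lattice $R\langle e, De, \dots, D^{n-1}e\rangle$ is visibly $D$-stable once all $a_i$ lie in $R$. For $(\Rightarrow)$, your reduction via Lemma~\ref{lem: alternative defi of irregularity} with $\Lambda' = \Lambda$ is the intended move, and the Fredholm computation is the right one: writing $D = u\tfrac{d}{du} + A$ with $A \in M_n(R)$, the coefficient recursion $(kI + A(0))w_k = v_k - \sum_{j\geq 1} A_j w_{k-j}$ is uniquely solvable once $k$ avoids the finitely many eigenvalues of $-A(0)$, so $D$ is bijective on $u^N R^n$ for $N$ large; index additivity along $0 \to u^N R^n \to R^n \to R^n/u^N R^n \to 0$ then forces $\chi(D;\Lambda,\Lambda) = 0$. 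One small caveat worth noting explicitly: the argument that such an $N$ exists uses that the map $\ZZ_{\geq 0} \to \KK$ is injective, i.e.\ that $\KK$ has characteristic zero, which is harmless here since the application is over $\CC$.
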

\begin{nprop}[{\cite[Proposition~5.6]{Mal72}}]
	\label{prop: invertible polar term gives maximum irregularity}
	Let $(M,\nabla)$ be an $n$-dimensional $\MD$-module. Choose any basis for $M$, and write $D = u\frac{d}{du} + \sum_{i\geq -g}^{\infty}A_iu^i$ for some $g \geq 0$ and matrices $A_i: \KK^m \rightarrow \KK^m$. Then $Irr(M) \leq gm$, with equality if and only if $A_{-g}$ is invertible.
\end{nprop}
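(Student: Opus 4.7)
The plan is to apply Lemma~\ref{lem: alternative defi of irregularity} with the natural pair of lattices $\Lambda = R^m$ and $\Lambda' = u^{-g}R^m$. The inclusion $D\Lambda \subseteq \Lambda'$ is immediate from the assumed form of $D$, and $\dim_\KK(\Lambda'/\Lambda) = gm$, so the lemma yields
\[
\mathrm{Irr}(M, \nabla) \;=\; \chi(D;\, R^m,\, u^{-g}R^m) \;+\; gm.
\]
The problem thus reduces to showing that $\chi(D;\, R^m,\, u^{-g}R^m) \leq 0$, with equality precisely when $A_{-g}$ is invertible.

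When $A_{-g}$ is invertible, I will show that $D$ is a $\KK$-linear bijection between $R^m$ and $u^{-g}R^m$, which immediately gives $\chi = 0$. Writing $f = \sum_{k \geq 0} f_k u^k$ and $v = \sum_{j \geq -g} v_j u^j$, the equation $Df = v$ unfolds coefficient-by-coefficient into the triangular recursion
\[
A_{-g}\, f_{j+g} \;=\; v_j - j f_j - \sum_{s=1}^{j+g} A_{-g+s}\, f_{j+g-s}, \qquad j \geq -g,
\]
with the convention $f_k = 0$ for $k < 0$. Starting at $j = -g$ (which gives $f_0 = A_{-g}^{-1} v_{-g}$), the $j$-th equation uniquely determines $f_{j+g}$ in terms of $v_j$ and the previously computed coefficients via $A_{-g}^{-1}$, producing a two-sided inverse for $D$.

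For the converse, the easy half is a cokernel bound: since $(Df)_{-g} = A_{-g} f_0$ always lies in $\mathrm{im}\,A_{-g}$, the image of $D$ is contained in $\{v \in u^{-g}R^m : v_{-g} \in \mathrm{im}\,A_{-g}\}$, so $\dim \mathrm{coker}\, D \geq \dim \mathrm{coker}\, A_{-g} > 0$ whenever $A_{-g}$ is not invertible. To upgrade this to the strict inequality $\chi < 0$, my plan is to perform a shearing (gauge) transformation: after a change of basis in $\KK^m$ putting $A_{-g}$ into block form with an invertible principal $r \times r$ block and a trailing zero block of rank $m - r$, multiplying the last $m - r$ basis vectors by $u$ produces a gauge-equivalent operator whose effective pole order on the trailing summand is strictly less than $g$. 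Inducting on $g$ and invoking additivity of irregularity (Lemma~\ref{lem: irregularity number is additive}) then gives $\mathrm{Irr}(M) \leq rg + (m-r)(g-1) < gm$.

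The main obstacle will be making the shearing precise: the block decomposition of $A_{-g}$ does not automatically respect the higher-order terms $A_i$ for $i > -g$, so the off-diagonal cross-terms must be carried through the gauge change, and one must check that they do not reintroduce a $u^{-g}$ pole on the trailing block. This is the technical heart of Malgrange's original argument, which I would invoke rather than reproduce in detail.
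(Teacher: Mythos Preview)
The paper does not give its own proof of this proposition; it is quoted from Malgrange and used as a black box. So there is nothing to compare against directly, and I will just comment on your argument.

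Your reduction via Lemma~\ref{lem: alternative defi of irregularity} with $\Lambda=R^m$, $\Lambda'=u^{-g}R^m$ is exactly the right move, and your treatment of the invertible case is correct: the triangular recursion shows $D\colon R^m\to u^{-g}R^m$ is a bijection, so $\chi=0$ and $\mathrm{Irr}(M)=gm$.

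The sketch for the non-invertible case, however, does not work as written. First, you cannot in general put $A_{-g}$ into the form $\begin{psmallmatrix}B&0\\0&0\end{psmallmatrix}$ with $B$ invertible; over $\KK$ algebraically closed the generalized-eigenspace decomposition gives $\begin{psmallmatrix}B&0\\0&N\end{psmallmatrix}$ with $N$ nilpotent, not zero. Second, the shearing $P=\mathrm{diag}(I_r,uI_{m-r})$ is a lattice change, not a splitting of the $\mathcal D$-module: after conjugation the off-diagonal blocks of the higher $A_i$ mix the two pieces, so there is no direct sum to which Lemma~\ref{lem: irregularity number is additive} applies. The genuine direct-sum step is the splitting lemma you already have as Lemma~\ref{lem: decomposition of D module} (valid for $g\geq 2$): it separates $M=M_{0}\oplus M_{\neq 0}$ with $A_{-g}$ invertible on $M_{\neq 0}$ and nilpotent on $M_0$. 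Your invertible argument handles $M_{\neq 0}$, but you are then left with showing $\mathrm{Irr}(M_0)<g\cdot\dim M_0$ when the leading term is nilpotent and nonzero. That is precisely the delicate Moser/Turrittin reduction (iterated shearings adapted to the nilpotent Jordan structure until the pole order genuinely drops), and it is not bypassed by a single shearing or by induction on $g$ alone. Your closing caveat that this is ``the technical heart of Malgrange's original argument'' is accurate; just be aware that your outlined mechanism for reaching it (zero block plus one shearing plus additivity) is not correct as stated.
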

The role of $A_{-g}$ is important: every $\MD$-module admits a decomposition by eigenvalues of this term:
\begin{nlemma}[\cite{Lev}, \cite{Was}]
	\label{lem: decomposition of D module}
	Let $(M,\nabla)$ be a $\MD$-module, with $\nabla$ of order $g$. Let $\Lambda$ be a lattice of order $g \geq 2$. Let $A_{-g} := u^{g}\nabla: \Lambda/u\Lambda \rightarrow \Lambda/u\Lambda$. Then there exists a unique decomposition of $\Lambda$: \begin{equation}
		\Lambda = \bigoplus_w \Lambda_w \text{ for } w \in \KK,
	\end{equation}
	such that $u^g \nabla: \Lambda_w \rightarrow \Lambda_w$ and $\Lambda_w/u\Lambda_w = (\Lambda/u\Lambda)_w$ is the $w$-generalised eigenspace of $A_{-g}$. 
\end{nlemma}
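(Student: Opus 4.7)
The plan is to prove this by a Hensel-type successive-approximation argument that lifts the generalized eigenspace decomposition of $A_{-g}$ from the reduction $\Lambda/u\Lambda$ to $\Lambda$ itself. First I would pick any $R$-basis of $\Lambda$ whose image modulo $u$ is adapted to the decomposition $\Lambda/u\Lambda = \bigoplus_w V_w$ into generalized eigenspaces of $A_{-g}$; this already produces an $R$-module decomposition $\Lambda = \bigoplus_w L_w^{(0)}$ lifting the $V_w$'s, although the operator $\Phi := u^g\nabla$ will only be block-diagonal with respect to it modulo $u$. In this basis $\Phi$ takes the form $\Phi = A(u) + u^g \frac{d}{du}$, where $A(u) = A_{-g} + uA_1 + u^2A_2 + \cdots$ is a matrix over $R$ that is block-diagonal modulo $u$.

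The heart of the argument is an inductive step that block-diagonalizes $A(u)$ modulo higher and higher powers of $u$. Suppose, after modifying the basis, we have $A(u) = D(u) + u^n E(u)$ with $D(u)$ strictly block diagonal. Applying a change of basis by $P = I + u^n X$, with $X$ a constant, strictly off-block-diagonal matrix, the transformation rule $A \mapsto P^{-1}(AP + u^g P')$ yields
\begin{equation}
    A'(u) \equiv D(u) + u^n\bigl([A_{-g}, X] + E(0)\bigr) + nu^{g+n-1}X \pmod{u^{n+1}}.
\end{equation}
The hypothesis $g \geq 2$ forces $g + n - 1 \geq n + 1$, so the differential contribution $nu^{g+n-1}X$ is harmlessly absorbed into the next-order error. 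The off-diagonal part of the remaining obstruction is then killed by solving the Sylvester equation $[A_{-g}, X] = -E(0)_{\mathrm{off}}$ on each off-diagonal block $\mathrm{Hom}(V_{w'}, V_w)$. Because $A_{-g}|_{V_w}$ and $A_{-g}|_{V_{w'}}$ have disjoint spectra $\{w\}$ and $\{w'\}$, the map $[A_{-g}, -]$ is invertible on such blocks, so $X$ exists. Iterating and taking the $u$-adic limit of the product of the $(I + u^n X_n)$'s gives an $R$-linear automorphism of $\Lambda$ conjugating $\Phi$ into genuinely block-diagonal form, producing the desired decomposition $\Lambda = \bigoplus_w \Lambda_w$ with $u^g\nabla(\Lambda_w) \subseteq \Lambda_w$ and $\Lambda_w/u\Lambda_w = V_w$ by construction.

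For uniqueness, I would let $\Lambda = \bigoplus \Lambda_w = \bigoplus \Lambda_w'$ be two such decompositions and study, for $w \neq w'$, the $R$-linear map $f_{w,w'}: \Lambda_w \hookrightarrow \Lambda \twoheadrightarrow \Lambda_{w'}'$. It commutes with $\Phi$ on both sides and vanishes modulo $u$, since both $\Lambda_w$ and $\Lambda_{w'}'$ reduce to the corresponding generalized eigenspaces of $A_{-g}$. Expanding the commutation relation with $\Phi$ order by order in $u$ and invoking the same Sylvester solvability (now in its injectivity form: $[A_{-g}, -]$ has trivial kernel on $\mathrm{Hom}(V_w, V_{w'})$ for $w \neq w'$) forces $f_{w,w'}$ to vanish at every order, hence altogether.

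The main obstacle is the careful bookkeeping in the inductive step: one must check that the differential term $u^g\frac{d}{du}$ contributes to the transformed connection only at order $\geq n+1$ (this is exactly where $g \geq 2$ enters), and that the Sylvester equation can be solved with $X$ of the required off-block-diagonal form. Both rely on the fundamental input that the generalized eigenspaces of $A_{-g}$ have pairwise disjoint spectra, which makes the cross-block commutator $[A_{-g}, -]$ invertible.
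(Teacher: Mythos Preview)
The paper does not supply its own proof of this lemma: it is stated as a citation to Levelt and Wasow, with no argument given in the text. Your proposal is the standard formal gauge-transformation (Hensel-type) argument that appears in those references, and it is correct as written; the key points --- the transformation law $A \mapsto P^{-1}AP + u^g P^{-1}P'$, the fact that $g \geq 2$ pushes the derivative term to order $\geq n+1$, and the solvability of the off-block Sylvester equation via disjointness of spectra --- are all handled correctly. So there is nothing to compare against, but your proof is sound and is essentially the classical one.
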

\begin{remark}
	By tensoring with $\kk$, we thus obtain a decomposition \begin{equation}
		M = \bigoplus_w M_w.
	\end{equation}
\end{remark}
\begin{remark}
	A similar result holds for $\MD$-modules of order $g = 1$ if one additionally assumes that the eigenvalues of $A_{-1}$ do not differ by an integer.
\end{remark}
\subsection{The quantum $\MD$-module of twistor bundles}
Let $Z$ again be a twistor bundle over a hyperbolic $6$-manifold with vanishing Stiefel-Whitney classes. Let $QH^*(Z)_w$ denote the generalised eigenspaces for $c_1 \star$. Let $DQH^*(Z)$ denote the $\MD$-module $\left(QH^*(Z)((u)), \nabla_{\frac{d}{du}}\right)$. From Theorem \ref{thm: quantum cohomology of FP}, it is clear that we get a decomposition \begin{equation}
	DQH^*(Z) = DQH^*(Z)^0 \bigoplus_{i = 1}^{m} DQH^*(Z)^i,
\end{equation}
where $DQH^*(Z)^0$ is the $8$-dimensional $\MD$-module spanned by $1, \al, \al^2, \al^3, Vol_M, \dots, Vol_M\al^3$, and each $DQH^*(Z)^i$ is $4$-dimensional, spanned by $y_i, \dots, \al^3y_i$. Using Lemma \ref{lem: decomposition of D module} we can then further decompose these summands, to obtain decompositions of $\MD$-modules: \begin{equation}
	DQH^*(Z)^j = \bigoplus_w DQH^*(Z)^j_w \text{ for each } j = 0, \dots, m.
\end{equation}
For $DQH^*(Z)^0$, we find eigenvalues $\pm 2$, both with multiplicity $4$, and the summands $DQH^*(Z)^i$ have eigenvalues $\pm 2$ with multiplicities $2$.

Theorem \ref{thm: FP has unramified exponential type} is equivalent to the statement that each $DQH^*(Z)^j_w \otimes \mathcal{E}^{\frac{w}{u}}$ is regular singular for all $w$. We will first prove this for the $4$-dimensional summands $DQH^*(Z)^i$, which have bases given by $\tau^*y_i, \alpha\tau^*y_i,\al^2\tau^*y_i,\alpha^3\tau_i^*y$. With respect to this basis, the connection is given by: \begin{equation}
	\nabla = \frac{d}{du} + u^{-1}\frac{m-6}{2}Id + u^{-1}\begin{pmatrix} 0 & 0 & 0 & 0 \\ 0 & 1 & 0 & 0 \\ 0 & 0 & 2 & 0 \\ 0 & 0 & 0 & 3\end{pmatrix} + u^{-2}
	\begin{pmatrix} 
		0 & 4 & 0 & 0\\
		1 & 0 & 0 & 0\\
		0 & 1 & 0 & 4\\
		0 & 0 & 1 & 0\end{pmatrix}
\end{equation}
\begin{remark}
	Here one has to take care to write the matrix for $c_1 \star$ with respect to the basis $\tau^*y_i, \alpha\tau^*y_i,\al^2\tau^*y_i,\alpha^3\tau_i^*y$, where powers of $\al$ are powers under the classical cup product. In Theorem \ref{thm: quantum cohomology of FP}, the quantum cohomology ring is expressed in quantum powers of $\al$.
\end{remark}
After tensoring with the regular 1-dimensional $\MD$-module with connection $\frac{d}{du} + u^{-1}\frac{6-m}{2}$, we are left with the above connection without the term $u^{-1}\frac{m-6}{2}Id$. We will also denote this connection by $\nabla$. Clearly, this procedure does not affect whether $\nabla$ is of exponential type or not.

The matrix $c_1 \star$ has $2$ Jordan blocks of size $2$, with eigenvalues $\pm 2$. In particular, $c_1 \star$ is invertible, so by Proposition \ref{prop: invertible polar term gives maximum irregularity} we have $Irr(\nabla) = 4$. Another way to see this directly is to compute the differential operator associated to the cyclic vector given $y$. It is given by: \begin{equation}
	L = \partial^4 - \frac{8}{u^2}\partial^2 + \frac{16}{u^2}\partial + \left(\frac{16}{u^4} - \frac{16}{u^2}\right),
\end{equation}
so that indeed $Irr(L) = 4$. Now, write the decomposition of Lemma \ref{lem: decomposition of D module} as $DQH^*(Z)^i = M_2 \oplus M_{-2}$, where both summands have dimension $2$. Next, consider the $\MD$-module $DQH^*(Z)^i \otimes \mathcal{E}^{\frac{-2}{u}} = M_2 \otimes \mathcal{E}^{\frac{-2}{u}} \oplus M_{-2} \otimes \mathcal{E}^{\frac{-2}{u}}$. A straightforward computation (best done using a computer), yields the differential operator associated to $DQH^*(Z)^i \otimes \mathcal{E}^{\frac{-2}{u}}$ as:
\begin{equation}
	L' = \partial^4 - \frac{8}{u}\partial^{3} + \left(\frac{16}{u^{2}} + \frac{12}{u} \right)\partial^{2} - \left(\frac{32}{u^{2}} + \frac{8}{u}\right)\partial + \frac{2}{u} + \frac{12}{u^{2}}.
\end{equation}
We thus have $Irr(DQH^*(Z)^i \otimes \mathcal{E}^{\frac{-2}{u}}) = 2 = Irr(M'_{-2}) + Irr(M'_{2})$, where the latter equality holds by Lemma \ref{lem: irregularity number is additive}. Now, by applying Proposition \ref{prop: invertible polar term gives maximum irregularity}, we find $Irr(M'_{2}) = 2$, whence $Irr(M'_{-2}) = 0$, which is thus a regular summand. By symmetry (or by direct computation with $DQH^*(Z)^i \otimes \mathcal{E}^{-\frac{2}{u}}$), we find also that $DQH^*(Z)^i_{-2} \otimes \mathcal{E}^{-\frac{2}{u}}$ is regular, which shows that $DQH^*(Z)^i$ is of unramified exponential type.

For the $8$-dimensional summand $DQH^*(Z)^0$ it is not obvious that $1$ is a cyclic vector, because the space generated by $1$ under $c_1 \star$ is 7-dimensional. Interestingly though, a direct computation shows that the $\kk$-linear space generated by $1$ under $\nabla_{\frac{d}{du}}$ is $8$-dimensional, so that $1$ is a cyclic vector. We then proceed in the same way as before. Clearly $Irr(DQH^*(Z)^0)= 8$. We compute the differential operator associated to $DQH^*(Z)^0 \otimes \mathcal{E}^{\frac{2}{u}}$, which is not printed here because it is rather lengthy, and find it has irregularity number $4$. The same arguments as before apply, which finishes the proof.
	\printbibliography
\end{document}